\newcommand{\Q}{\mathbb{Q}}
\newcommand{\Z}{\mathbb{Z}}
\newcommand{\A}{\mathcal{A}}
\newcommand{\F}{\mathbb{F}}
\crefname{lemma}{Lemma}{Lemmas}
\crefname{theorem}{Theorem}{Theorems}
\crefname{definition}{Definition}{Definitions}
\crefname{proposition}{Proposition}{Propositions}
\crefname{remark}{Remark}{Remarks}
\crefname{corollary}{Corollary}{Corollaries}
\crefname{equation}{Equation}{Equations}
\crefname{construction}{Construction}{Constructions}
\crefname{ex}{Example}{Examples}
\crefname{example}{Example}{Example}
\crefname{appsec}{Appendix}{Appendices}
\crefname{subsection}{Subsection}{Subsections}
\newtheorem{theorem}{Theorem}[section]
\newtheorem{letterthm}{Theorem}
\newtheorem{lettercor}[letterthm]{Corollary}
\newtheorem{lemma}[theorem]{Lemma}
\newtheorem{proposition}[theorem]{Proposition}
\newtheorem{corollary}[theorem]{Corollary}
\newtheoremstyle{BoldRemark} % name
{10pt}                    % Space above
{10pt}                    % Space below
{\upshape}                   % Body font
{}                           % Indent amount
{\bfseries}                  % Theorem head font
{.}                          % Punctuation after theorem head
{.5em}                       % Space after theorem head
{}  % Theorem head spec (can be left empty, meaning ‘normal’)
\theoremstyle{BoldRemark}
\newtheorem{remark}[theorem]{Remark}
\newtheorem{definition}[theorem]{Definition}
\newtheorem{letterdef}[letterthm]{Definition}
\newcommand{\Res}{\mathrm{Res}}
\newcommand{\res}{\mathrm{res}}
\newcommand{\Nm}{\mathrm{Nm}}
\newcommand{\Coind}{\mathrm{CoInd}}
\newcommand{\Hom}{\mathrm{Hom}}
\newcommand{\colim}{\mathrm{Colim}}
\newcommand{\Der}{\mathrm{Der}}
\newcommand{\Tr}{\mathrm{Tr}}
\newcommand{\FP}{\mathrm{FP}}
\newcommand{\ev}{\mathrm{ev}}
\newcommand{\fet}{\mathsf{Grp}^{\mathrm{f\acute{e}t\text{-}aff}}}
\newcommand{\Mod}{\mathrm{Mod}}
\newcommand{\Alg}{\mathrm{Alg}}
\newcommand{\Ring}{\mathrm{Ring}}
\newcommand{\Tamb}{\mathrm{Tamb}}
\newcommand{\Green}{\mathrm{Green}}
\author[N. Wisdom]{Noah Wisdom}
\address{Department of Mathematics,
         Northwestern University,
         Evanston, IL, 60208
         USA}
\email{NoahAnkney2026@u.northwestern.edu}
\keywords{Tambara functors, \'{e}tale, K\"{a}hler differentials}
\subjclass[2020]{
14B25 (primary) % Local structure of morphisms in algebraic geometry: étale, flat, etc.
55P91, % Equivariant homotopy theory in algebraic topology
14L15 (secondary) % Group schemes
}
\title{Affine \'{e}tale group schemes over Tambara fields}
\date{}
\begin{document}
\begin{abstract}
    We classify finite \'{e}tale extensions and finite affine \'{e}tale group schemes over the $G$-Tambara functor $\underline{\mathbb{F}}$, for $\mathbb{F}$ any algebraically closed field and $G$ any finite group. This establishes $G$-Galois descent from the Tambara functor algebraic closure of $\underline{\mathbb{F}}$. In particular, we find new families of \'{e}tale extensions of any $G$-Tambara functor and show that, together with one of the families discovered by Lindenstrauss--Richter--Zou, these give all finite \'{e}tale extensions of $\underline{\mathbb{F}}$. Our arguments also show that the map $\underline{K} \rightarrow \mathrm{FP}(L)$ associated to any $G$-Galois extension $L$ of $K$ is \'{e}tale, generalizing a result of Lindenstrauss--Richter--Zou when $G$ is cyclic. Lastly, we classify flat finitely generated $\underline{\mathbb{F}}$-modules when $G = C_p$.
\end{abstract}

\maketitle
\tableofcontents

\section{Introduction}

Tambara functors are equivariant analogues of rings arising in representation theory, group cohomology, and equivariant homotopy theory. Recently, the commutative algebra of Tambara functors has been studied by homotopy theorists motivated in part by the hope of establishing equivariant analogues of classical applications of homological algebra and algebraic geometry to equivariant homotopy theory.

For example, \cite{4DS} study the Nakaoka spectrum (the equivariant analogue of the Zariski spectrum) of some free polynomial Tambara functors motivated in part by the hope of gaining insight into the algebraic geometry of the Tambara affine line. On the other hand, \cite{HMQ23} observe that free polynomial Tambara functors often fail to be flat as modules, suggesting that new ideas may be necessary to approach some of the homological algebra which appears in equivariant stable homotopy theory.

One important concept in algebraic geometry is the notion of \'{e}taleness. For example, Grothendieck introduced \'{e}tale cohomology and used it to prove three of the four Weil conjectures. In fact, Grothendieck was partially motivated by the observation that the Zariski topology was not always the correct topology. Equivariant algebra has the same issue: there exists a basic open subset of the Nakaoka spectrum of a Tambara functor which is not homeomorphic to the Nakaoka spectrum of any Tambara functor (in particular, there is no localization producing the correct Nakaoka spectrum).

In \cite{Hil17}, Hill introduced genuine K\"{a}hler differentials in the Tambara functor setting and showed that the genuine K\"{a}hler differentials support the universal Tambara functor derivation. Hill also proposed defining a \emph{formally \'{e}tale} Tambara functor morphism as a map of Tambara functors which is both flat and for which the genuine K\"{a}hler differentials vanish. A Tambara functor morphism is \emph{\'{e}tale} if it is finitely presented and formally \'{e}tale. Later in \cite{LRZ24} Lindenstrauss, Richter, and Zou discovered interesting families of (formally) \'{e}tale morphisms of $C_n$-Tambara functors.

To start with, we establish many familiar properties of \'{e}tale morphisms: they are closed under composition, passing to projections, taking products and coinduction (a twisted product with no analogue in the nonequivariant world), and they are preserved by base-change. The principal difficulty here is in checking the finitely presented condition, as free polynomial Tambara functors are not known to always satisfy the ascending chain condition on ideals. Next, we find more examples of \'{e}tale extensions.

\begin{letterthm}(cf. \cref{thm:coind-are-etale})
    For any $G$-Tambara functor $k$ and subgroup $H \subset G$, the canonical map 
    \[ 
        k \rightarrow \Coind_H^G \Res_H^G k 
    \] 
    is \'{e}tale.
\end{letterthm}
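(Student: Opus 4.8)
The plan is to verify directly that the unit map $\eta\colon k\to\Coind_H^G\Res_H^G k$ satisfies the three defining conditions of étaleness: finite presentation, flatness as a $k$-module, and vanishing of the genuine Kähler differentials $\underline{\Omega}_{\Coind_H^G\Res_H^G k/k}$. The first step is to pin down an explicit description of $\Coind_H^G\Res_H^G k$. Using the $(\Res_H^G,\Coind_H^G)$-adjunction together with the double-coset decomposition of $\Res_H^G(G/K)$, one finds $(\Coind_H^G\Res_H^G k)(G/K)\cong\prod_{[g]\in H\backslash G/K}k\bigl(G/(H\cap{}^gK)\bigr)$, with $\eta$ given levelwise by the ``twisted diagonal'' $x\mapsto\bigl(g\cdot\res(x)\bigr)_{[g]}$. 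The crucial structural observation I would extract from this is that $\Coind_H^G\Res_H^G k$ is generated as a $k$-algebra by finitely many idempotents --- the characteristic functions $\epsilon_{[g]}^K$ of the double cosets appearing above. In fact it suffices to take the idempotents at the bottom level $G/e$, i.e.\ a single idempotent $e$ attached to the $G$-set $G/H$ together with its $G$-translates, and then recover the higher levels by applying transfers to products $\epsilon_{[g]}\cdot\eta(x)$ --- exactly as in the $C_2$, $H=e$ case, where $\tr_e^{C_2}(e\cdot\eta(r))=r$ fills in level $C_2/C_2$. Since these idempotents are subject only to finitely many relations --- idempotency, orthogonality under the $G$-action, completeness, and the norm/transfer identities $N(e)=0$ and $\tr(e)=1$ --- this simultaneously exhibits $\eta$ as finitely presented.

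For flatness I would invoke the projection formula. From the explicit description, the underlying $k$-module of $\Coind_H^G\Res_H^G k$ coincides with that of $\Ind_H^G\Res_H^G k$ (induction and coinduction of Mackey functors agree for finite groups), and by the projection formula, tensoring any $k$-module $M$ with $\Ind_H^G\Res_H^G k$ over $k$ is naturally isomorphic to $\Ind_H^G\Res_H^G M$. As $\Ind_H^G$ and $\Res_H^G$ are both exact for finite groups, this functor is exact, so $\Coind_H^G\Res_H^G k$ is flat --- indeed locally free --- over $k$.

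It remains to show $\underline{\Omega}_{\Coind_H^G\Res_H^G k/k}=0$, and here the idempotent description does the work. Since $\Coind_H^G\Res_H^G k$ is generated over $k$ by the idempotents $\epsilon_{[g]}^K$, the module $\underline{\Omega}_{\Coind_H^G\Res_H^G k/k}$ is generated by the classes $d\epsilon_{[g]}^K$. But applying the universal genuine derivation $d$ to $e^2=e$ and using the Leibniz rule gives $(1-2e)\,de=0$ in $\underline{\Omega}_{\Coind_H^G\Res_H^G k/k}(G/K)$, and $1-2e$ is a unit of $(\Coind_H^G\Res_H^G k)(G/K)$ because $(1-2e)^2=1$; hence $de=0$ for every such idempotent. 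Thus $\underline{\Omega}_{\Coind_H^G\Res_H^G k/k}$ is generated by zero elements and so vanishes. Together with finite presentation and flatness this shows $\eta$ is étale.

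The main obstacle is the first step: rigorously establishing that $\Coind_H^G\Res_H^G k$ is generated over $k$ by a finite set of idempotents admitting an explicit finite presentation. This amounts to carefully unwinding the double-coset combinatorics controlling the structure maps of $\Coind_H^G$ and checking that transfers from the bottom level, together with the image of $\eta$, exhaust every level; once this is in place, the remaining arguments are formal. An alternative route that sidesteps the bookkeeping: if one first proves a multiplicative projection formula $\Coind_H^G(\Res_H^G k\otimes N)\cong k\otimes\Coind_H^G N$, then $\Coind_H^G\Res_H^G k\cong k\otimes_{\burn}\Coind_H^G\Res_H^G\burn$, and the whole statement reduces, via stability of étaleness under base change, to the universal case over the Burnside Tambara functor $\burn$, where the idempotent analysis is cleanest.
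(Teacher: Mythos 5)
Your treatment of flatness is essentially the paper's own argument (the projection formula $\Coind_H^G\Res_H^G k\boxtimes_k M\cong\Coind_H^G\Res_H^G M$ plus exactness of restriction and coinduction; cf.\ \cref{lem:coind-of-res-is-box-product} and \cref{lem:free-k-modules-are-flat}), and your argument for the vanishing of the K\"{a}hler differentials is a genuinely different and appealing route: since a genuine $k$-derivation is additive, kills the image of $k$, commutes with transfers, and sends norms of $r$ into the ideal generated by $d(r)$, it is determined by its values on any set of Tambara-algebra generators, and the $(1-2e)\,de=0$ trick kills idempotent generators. The paper instead shows that the augmentation ideal $I$ of $\Coind_H^G\Res_H^G k\boxtimes_k\Coind_H^G\Res_H^G k$ is itself coinduced, that $I^{>1}$ is the coinduction of $J^{>1}$, and inducts on $|G|$ through the double coset formula. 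Your version trades that induction for the single claim that the idempotents generate, which is a much lighter burden than a presentation and is verifiable level by level (the identity double-coset component of level $G/L$, $L\subset H$, is hit by $\epsilon\cdot\eta(x)$, and the remaining levels are filled in by conjugations and transfers of such elements).

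The genuine gap is in the finite presentation step, which you correctly flag as the main obstacle but then treat as bookkeeping. Knowing a finite set of \emph{generators} is far from knowing a finite \emph{presentation}: you must exhibit $\Coind_H^G\Res_H^G k$ as a coequalizer of free polynomial $k$-algebras, i.e.\ show that your proposed relations (idempotency, orthogonality and completeness of the $G$-orbit, $\Nm(e)=0$, $\Tr(e)=1$) generate the entire kernel of $k[y_{G/H}]\twoheadrightarrow\Coind_H^G\Res_H^G k$ at every level $G/K$. Already the relation list is suspect: for general $H\subset K\subset G$ the norms and transfers of the identity-component idempotent are governed by the double cosets $H\backslash G/K$ and the intersections $H\cap{}^gK$, so ``$\Nm(e)=0$'' is not the right statement at all levels, and nothing in the proposal controls the kernel above the relations you list. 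Verifying completeness of a relation set here is essentially as hard as the theorem itself. The paper sidesteps an explicit presentation entirely: by \cite[Proposition 4.3]{SSW24} finite presentation is equivalent to compactness in $k\text{-}\Alg$, and \cref{thm:coind-are-etale} proves compactness by induction on $|G|$, using the fact that any algebra under a coinduced Tambara functor is coinduced (\cite[Corollary G]{Wis25a}) to factor a filtered diagram through $\Coind_H^G$, then transposing across the $(\Res_H^G,\Coind_H^G)$ adjunction and applying the inductive hypothesis to the double-coset factors of $\Res_H^G\Coind_H^G\Res_H^G k$ via \cref{prop:prod-of-etale-is-etale}. If you want to salvage your route, you would either need to carry out the presentation honestly (including the induction over $H\cap{}^gH$ that the double cosets force on you) or replace it with a compactness argument of this kind; your closing suggestion of base-changing from the Burnside functor does not avoid the issue, since the universal case over $\A$ has exactly the same presentation problem, and the non-flat base change of Tambara K\"{a}hler differentials needed to descend from $\A$ to a general $k$ is itself unproven in this generality.
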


\begin{remark}
    In \cite{Wis25b}, the author gives explicit computations of the map of Nakaoka spectra induced by the \'{e}tale map $k \rightarrow \Coind_H^G \Res_H^G k$. Surprisingly, when $k$ is the Burnside Tambara functor, the induced map on Nakaoka spectra is closed and not open (when $H \neq G$). This contrasts the classical fact that any \'{e}tale map of schemes is open.
\end{remark}

Previous work of Schuchardt, Spitz, and the author shows that the Tambara functors $\Coind_e^G \F$, for $\F$ an algebraically closed field, should be viewed as the Tambara functor analogues of algebraically closed fields. Now $\Coind_e^G \F$ carries a natural $G$-action, whose fixed-points are the constant Tambara functor $\underline{\F}$. Already we are encountering interesting new behavior in the equivariant algebra world: classically, the only finite group which acts faithfully on an algebraically closed field is $C_2$, in which case we must be in characteristic zero \cite{AS27a, AS27b}. 

As observed in \cite{SSW24}, $\Coind_e^G$ gives a faithful embedding of the category of rings in the category of $G$-Tambara functors, so that the finite \'{e}tale $R$-algebras correspond precisely to finite \'{e}tale $\Coind_e^G R$-algebras. In particular, an arbitrary $\Coind_e^G \F$-algebra $\Coind_e^G S$ is \'{e}tale if and only if $S$ is an \'{e}tale $\F$-algebra, for $\F$ an algebraically closed field.

Guided by the hope that Galois descent along $\underline{\F} \rightarrow \Coind_e^G \F$ holds, we study finite affine \'{e}tale group schemes, defined as follows.

\begin{letterdef}
    A finite affine \'{e}tale group scheme over a ground Tambara functor $k$ is a representable functor 
    \[ 
        \Hom_{k\text{-}\Alg}(R,-) : k\text{-}\Alg \rightarrow \mathsf{Grp} 
    \] 
    such that $k \rightarrow R$ is finite and \'{e}tale. We will write 
    \[ 
        \fet_k 
    \] 
    for the category of such objects (with morphisms the natural transformations).

    A finite affine \'{e}tale group scheme is commutative if it takes values in abelian groups.
\end{letterdef}

\begin{remark}
    The finite presentation assumption in the definition of \'{e}tale implies that any finite affine \'{e}tale group scheme preserves filtered colimits.
\end{remark}

With sufficient control over flat modules, we are able to completely classify finite \'{e}tale $G$-Tambara functor maps out of the constant $G$-Tambara functor $\underline{\F}$, for $\F$ an algebraically closed field. This is the first result in the literature which classifies all \'{e}tale extensions of a given Tambara functor.

\begin{letterthm}(cf. \cref{thm:modular-characteristic-characterizing-finite-etale})
	Let $G$ be an arbitrary finite group and $\F$ any algebraically closed field. A finite $\underline{\F}$-algebra is \'{e}tale if and only if it is a finite product of \'{e}tale $\underline{\F}$-algebras 
    \[ 
    \underline{\F} \rightarrow \Coind_H^G \underline{\F} . 
    \]
\end{letterthm}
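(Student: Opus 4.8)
The plan is to prove the two implications separately: the ``if'' direction is essentially formal, and the ``only if'' direction is the substantive one, reducing to a $G$-Galois descent statement along $\underline{\F} \to \Coind_e^G \F$.

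For the ``if'' direction, note that $\Res_H^G \underline{\F}$ is the constant $H$-Tambara functor $\underline{\F}$, so \cref{thm:coind-are-etale} already tells us $\underline{\F} \to \Coind_H^G \underline{\F}$ is étale; it is moreover finite, since $\Coind_H^G \underline{\F}$ is levelwise a finite product of copies of $\F$ and is finitely generated as an $\underline{\F}$-module. As finite étale morphisms are closed under finite products (established earlier in the paper), any finite product of maps $\underline{\F} \to \Coind_{H_i}^G \underline{\F}$ is again finite étale.

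For the ``only if'' direction, let $\underline{\F} \to \ell$ be finite étale, and base-change along $\underline{\F} \to \Coind_e^G \F$. By the projection formula for coinduction, $\ell \otimes_{\underline{\F}} \Coind_e^G \F \cong \Coind_e^G \Res_e^G \ell$, so, setting $R := \ell(G/e) = \Res_e^G \ell$, this base change is $\Coind_e^G R$. Since finite étale morphisms are stable under base change, $\Coind_e^G R$ is finite étale over $\Coind_e^G \F$; as $\Coind_e^G$ detects étaleness (recorded in the introduction, following \cite{SSW24}) and $\ell$ is module-finite over $\underline{\F}$, the ring $R$ is a finite étale $\F$-algebra, hence — $\F$ being algebraically closed — $R \cong \F^n$. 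Equivalently $R \cong \mathrm{Map}(T,\F)$ as a $G$-ring, where $T$ is the finite $G$-set given by the Weyl action of $G$ on the primitive idempotents of $\ell(G/e)$. The main step is then to recover $\ell$ from $R$ and its Weyl action, i.e.\ to show $\ell$ is the fixed-point Tambara functor $\mathrm{FP}(R)$, with the restriction maps identifying $\ell(G/K)$ with $R^K$. Granting this, write $T = \coprod_i G/H_i$, so $R \cong \prod_i \mathrm{Map}(G/H_i,\F)$; since $\mathrm{FP}$ preserves finite products and $\mathrm{FP}(\mathrm{Map}(G/H_i,\F)) \cong \Coind_{H_i}^G \underline{\F}$ — compare levelwise, $\mathrm{Map}(G/H_i,\F)^K = \mathrm{Map}(K\backslash G/H_i,\F) = (\Coind_{H_i}^G \underline{\F})(G/K)$, and check the Tambara structure maps agree — we conclude $\ell \cong \prod_i \Coind_{H_i}^G \underline{\F}$.

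I expect the identification $\ell \cong \mathrm{FP}(\ell(G/e))$ to be the crux, and this is where ``sufficient control over flat modules'' is needed: a Tambara functor is in general far from determined by its value at $G/e$ together with the Weyl action, so ruling out finite étale $\underline{\F}$-algebras that are not fixed-point functors — equivalently, showing the descent along $\underline{\F} \to \Coind_e^G \F$ is effective with no ``exotic'' semilinear descent data on the split algebra $(\Coind_e^G \F)^n$ — should genuinely use the vanishing of the genuine Kähler differentials (with flatness and finiteness), not merely formal stability properties of étale maps. By contrast, the projection formula for $\Coind_e^G \Res_e^G$ and the identification $\mathrm{FP}(\mathrm{Map}(G/H,\F)) \cong \Coind_H^G \underline{\F}$ of Tambara functors should be routine bookkeeping.
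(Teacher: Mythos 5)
Your ``if'' direction and the reduction of the ``only if'' direction to the bottom level are both fine and agree with the paper's setup. But the proof has a genuine gap exactly where you flag it: the entire content of the theorem is the claim that $\ell$ is recovered from $\ell(G/e)$ with its Weyl action, i.e.\ $\ell \cong \FP(\ell(G/e))$, and you write ``Granting this'' without supplying an argument. Everything before that point only constrains the value of $\ell$ at $G/e$; it says nothing about the levels $\ell(G/H)$ for $H \neq e$, which is precisely where exotic behavior could live. So as written the proposal establishes the easy half and the easy part of the hard half, and defers the crux.

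For comparison, the paper closes this gap without ever invoking $\Omega^{1,\Tamb}_{\ell/\underline{\F}} = 0$ at the higher levels: the vanishing of the K\"ahler differentials is used only to see that $\ell(G/e)$ is finite \'{e}tale over $\F$, hence $\prod_i \F$ with a permutation $G$-action. The reconstruction of $\ell$ then proceeds in two structural steps. First, \cite[Theorem D]{Wis25a} (an idempotent-lifting/coinduction-recognition result, applicable since $\ell(G/e)$ is Noetherian) decomposes $\ell \cong \prod_i \Coind_{H_i}^G \ell_i$ with $\ell_i(G/e) \cong \F$; this is the step that rules out exotic descent data, and it is a structure theorem about Tambara functors rather than a consequence of formal \'{e}taleness. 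Second, an induction on $|G|$ --- base-changing along the flat map $\underline{\F} \to \Coind_H^G \underline{\F}$ via \cref{lem:coind-adj-unit-is-base-changed} and projecting onto the identity double coset factor --- reduces to the case $\ell(G/e) \cong \F$, where flatness alone (via \cref{lem:flat-over-MRC-implies-MRC}, all restrictions in $\ell$ are injective) forces each $\ell(G/H) \to \ell(G/e) \cong \F$ to be an isomorphism onto $\F$, so $\ell \cong \underline{\F}$. If you want to complete your version, you would need to either import \cite[Theorem D]{Wis25a} or prove an equivalent statement that a flat finite $\underline{\F}$-algebra whose bottom level is $\mathrm{Map}(T,\F)$ splits as the corresponding product of coinductions; neither follows from the stability properties of \'{e}tale maps you cite.
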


Recall the topologist's notation $\mathcal{C}^{BG}$ for the category whose objects are the objects of a category $\mathcal{C}$ equipped with an action of the group $G$, and whose morphisms are $G$-equivariant morphisms in $\mathcal{C}$. We are able to establish $G$-Galois descent along $\underline{\F} \rightarrow \Coind_e^G \F$ for finite affine \'{e}tale group schemes.

\begin{lettercor}(cf. \cref{cor:finite-etale-group-schemes-modular-characteristic})
    Let $G$ be an arbitrary finite group and $\F$ any algebraically closed field. Then
    \[ 
        \ev_{G/e} : \fet_{\underline{\F}} \rightarrow (\fet_\F)^{BG} 
    \] 
    is an equivalence of categories with inverse induced by the fixed-point construction $\FP$. The subcategory of commutative group schemes is equivalent to the category of finitely generated $\Z[G]$-modules.
\end{lettercor}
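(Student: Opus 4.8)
The plan is to deduce the corollary from an equivalence between the underlying categories of finite \'etale algebras, built from the evaluation--fixed-point adjunction together with \cref{thm:coind-are-etale} and \cref{thm:modular-characteristic-characterizing-finite-etale}.

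First I would reformulate both sides via Yoneda. Since the forgetful functor $\mathsf{Grp}\to\Set$ is faithful, $\fet_{\underline{\F}}$ is anti-equivalent to the category of finite \'etale Hopf $\underline{\F}$-algebras, and $(\fet_\F)^{BG}$ to the category of finite \'etale Hopf $\F$-algebras equipped with a $G$-action; because $\F$ is algebraically closed, a finite \'etale $\F$-algebra is $\F^S$ for a finite set $S$, and an action on it is the same as a $G$-set structure on $S$. Under these identifications $\ev_{G/e}$ becomes the functor $R\mapsto R(G/e)$ with its residual Weyl $G$-action, which makes sense because evaluation at $G/e$ is symmetric monoidal for the box product and hence carries Hopf objects to Hopf objects. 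Since an equivalence of categories automatically preserves coproducts, and therefore carries commutative Hopf algebras to commutative Hopf algebras, it is enough to show that
\[
  \ev_{G/e}\colon\{\text{finite \'etale }\underline{\F}\text{-algebras}\}\longrightarrow\{\text{finite \'etale }\F\text{-algebras with }G\text{-action}\}
\]
is an equivalence whose inverse is $\FP$; the group-scheme statement and the identification of its inverse then follow formally, the Hopf structures transporting along the equivalence.

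The engine is the adjunction $\ev_{G/e}\dashv\FP$ between $G$-Tambara functors and commutative rings with $G$-action. Its counit is the tautological isomorphism $\FP(S)(G/e)\cong S$, so $\FP$ is fully faithful, and it remains to identify the essential images. The key computation is that for each subgroup $H\subseteq G$ there is a natural isomorphism $\FP(\F^{G/H})\cong\Coind_H^G\underline{\F}$ of $\underline{\F}$-algebras, which I would prove by Yoneda: for a $G$-Tambara functor $T$, chain the adjunction above, the adjunction $\Res_H^G\dashv\mathrm{Map}_H(G,-)$ for group actions (using that $\F^{G/H}=\mathrm{Map}_H(G,\F)$ with $\F$ the trivial $H$-ring), the evaluation--fixed-point adjunction over $H$, the identification of $\FP$ of the trivial $H$-ring $\F$ with the constant $H$-Tambara functor $\underline{\F}$, and the adjunction $\Res_H^G\dashv\Coind_H^G$ for Tambara functors, to get $\Hom(T,\FP(\F^{G/H}))\cong\Hom(T,\Coind_H^G\underline{\F})$ naturally in $T$. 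Granting this: by \cref{thm:coind-are-etale} and the stability of \'etale maps under finite products, $\FP$ carries finite \'etale $\F$-algebras with $G$-action (each a product of copies of the $\F^{G/H_i}$) into finite \'etale $\underline{\F}$-algebras, and by \cref{thm:modular-characteristic-characterizing-finite-etale} every finite \'etale $\underline{\F}$-algebra is a product of the $\Coind_{H_i}^G\underline{\F}$, so $\ev_{G/e}$ lands in finite \'etale $\F$-algebras with $G$-action. Since $\ev_{G/e}$ and $\FP$ both preserve finite products, \cref{thm:modular-characteristic-characterizing-finite-etale} reduces the claim that the unit $R\to\FP(R(G/e))$ is an isomorphism to the case $R=\Coind_H^G\underline{\F}$, which is the key computation; combined with the counit isomorphism this exhibits $\ev_{G/e}$ and $\FP$ as inverse equivalences, and unwinding the Yoneda reformulation proves the corollary.

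I expect the main obstacle to be the key computation $\FP(\F^{G/H})\cong\Coind_H^G\underline{\F}$, and more precisely two points feeding into it: first, confirming that $\FP$ really is right adjoint to $\ev_{G/e}$ on $G$-\emph{Tambara} functors and not merely on Mackey functors (the multiplicative norms of $\Coind_H^G$ and of $\FP$ must be matched), and second, checking that the adjunction isomorphisms in the chain respect the maps from $\underline{\F}$, so that everything indeed takes place among $\underline{\F}$-algebras rather than just among Tambara functors. Should the adjunction prove delicate, I would instead establish the key isomorphism by hand, matching the restriction, transfer, and norm maps of $\FP(\F^{G/H})$ with those of $\Coind_H^G\underline{\F}$ level by level via the double-coset formula for $\Res_H^G\Coind_K^G$. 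The remaining steps are bookkeeping, but they rely essentially on \cref{thm:modular-characteristic-characterizing-finite-etale} for the classification of objects and on the basic stability properties of \'etale morphisms established earlier.
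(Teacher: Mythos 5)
Your proposal is correct and follows essentially the same route as the paper: the counit of $\ev_{G/e}\dashv\FP$ is tautologically an isomorphism, and the unit $\ell\to\FP(\ell(G/e))$ is an isomorphism because \cref{thm:modular-characteristic-characterizing-finite-etale} exhibits every finite \'etale $\underline{\F}$-algebra as a product of $\Coind_H^G\underline{\F}$'s, which are fixed-point Tambara functors. The paper leaves your ``key computation'' $\FP(\F^{G/H})\cong\Coind_H^G\underline{\F}$ and the verification that $\FP$ lands in $\fet_{\underline{\F}}$ (via \cref{thm:coind-are-etale} and \cref{prop:prod-of-etale-is-etale}) implicit, so your write-up is just a more detailed version of the same argument.
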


\begin{remark}
    From a homotopy theorist's point of view, $\Coind_e^G \F$ is very uninteresting (for example, Bredon cohomology with $\Coind_e^G R$ coefficients only detects the underlying space of a $G$-space). On the other hand, $\underline{\F}$ is an extremely common choice of coefficients---especially $\underline{\F_2}$ when $G = C_2$ (cf. \cite{BW18,DHM24,HW20,Haz21,May20,Pet24}).
    
    Galois descent in chromatic homotopy theory is an important tool, for example along the Galois extension $L_{K(n)} \mathbb{S} \rightarrow E_n$ with profinite Galois group the Morava stabilizer group. The topological analogue of Galois descent along $R \rightarrow \Coind_e^G R$ is the homotopy fixed point spectral sequence, which is of fundamental importance. For example, this spectral sequence relates $\mathrm{THH}$ and $\mathrm{TC}^-$.
\end{remark}

When $|G|$ is invertible we can treat more general base Tambara functors.

\begin{letterthm}(cf. \cref{thm:detecting-etale-at-bottom-level})
	Let $\ell$ be a flat finitely presented $k$-algebra in $G$-Tambara functors and assume either
	\begin{enumerate}
		\item $\ell$ is cohomological and $|G|$ is invertible in $\ell(G/G)$, or
		\item all transfers in $\ell$ are surjective.
	\end{enumerate}
	Then $\ell$ is \'{e}tale over $k$ if and only if $\ell(G/e)$ is \'{e}tale over $k(G/e)$.
\end{letterthm}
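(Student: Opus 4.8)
The plan is to reduce both implications to the vanishing of Hill's genuine K\"{a}hler differentials. Since an \'{e}tale morphism is by definition a finitely presented, flat morphism whose genuine K\"{a}hler differentials vanish, and $\ell$ is assumed flat and finitely presented over $k$, the morphism $k \to \ell$ is \'{e}tale if and only if $\underline{\Omega}_{\ell/k} = 0$. The one structural fact I will use is the identification of the bottom level
\[
    \underline{\Omega}_{\ell/k}(G/e) \;\cong\; \Omega_{\ell(G/e)/k(G/e)},
\]
which holds because a genuine derivation $\ell \to M$ over $k$ restricts at level $G/e$ to an ordinary $k(G/e)$-linear derivation $\ell(G/e) \to M(G/e)$, and the extra (norm-Leibniz) relations defining a genuine derivation constrain only the values at the higher levels, so this restriction is an isomorphism of corepresenting objects. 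With this in hand, the forward implication is essentially formal: if $k\to\ell$ is \'{e}tale then $\underline{\Omega}_{\ell/k}=0$, hence $\Omega_{\ell(G/e)/k(G/e)}=0$, and since $\Res^G_e$ preserves flatness and finite presentation this shows $\ell(G/e)$ is \'{e}tale over $k(G/e)$.

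For the converse, assume $\ell(G/e)$ is \'{e}tale over $k(G/e)$, so that $\Omega_{\ell(G/e)/k(G/e)}=0$, and put $M := \underline{\Omega}_{\ell/k}$. Then $M$ is a module over the Tambara functor $\ell$, and $M(G/e)=0$ by the displayed identification; it remains to prove $M(G/H)=0$ for every subgroup $H\subseteq G$. In either of the two cases the hypothesis on $\ell$ forces $M(G/H)$ to be controlled by $M(G/e)$ -- in case (1) by making the restriction $\res^H_e$ injective, in case (2) by making the transfer $\tr^H_e$ annihilate $M(G/H)$ via Frobenius reciprocity.

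In case (1), the underlying Green functor of $\ell$ is cohomological, hence admits a Green functor map from the constant Green functor $\underline{\Z}$; restricting scalars, $M$ is a $\underline{\Z}$-module, hence a cohomological Mackey functor, so that $\tr^H_e\res^H_e = |H|\cdot\mathrm{id}$ on $M(G/H)$. Since $|G|$ is invertible in $\ell(G/G)$, its image under the ring map $\res^G_H$ is invertible in $\ell(G/H)$, and as $|H|$ divides $|G|$ the integer $|H|$ is a unit in $\ell(G/H)$; since $M(G/H)$ is an $\ell(G/H)$-module, multiplication by $|H|$ is invertible on it. Thus $\res^H_e\colon M(G/H)\to M(G/e)$ is injective, and its target vanishes, so $M(G/H)=0$. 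In case (2), the transfer $\tr^H_e\colon \ell(G/e)\to\ell(G/H)$ is surjective, so $1_{\ell(G/H)}=\tr^H_e(a)$ for some $a\in\ell(G/e)$; for any $m\in M(G/H)$, Frobenius reciprocity gives
\[
    m \;=\; \tr^H_e(a)\cdot m \;=\; \tr^H_e\bigl(a\cdot\res^H_e(m)\bigr),
\]
and $a\cdot\res^H_e(m)$ lies in $M(G/e)=0$, so $m=0$. In both cases $M=0$, so $k\to\ell$ is \'{e}tale, and the proof is complete.

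I expect the main obstacle to be the structural identification $\underline{\Omega}_{\ell/k}(G/e)\cong\Omega_{\ell(G/e)/k(G/e)}$, together with the (easier) compatibility of flatness and finite presentation with $\Res^G_e$: verifying these requires unwinding Hill's construction of the genuine K\"{a}hler differentials and checking that the norm structure imposes no relations at the bottom level -- intuitively clear, but the point where genuine care is needed. The remaining ingredients (Frobenius reciprocity for modules over Green functors, the fact that a module over a cohomological Green functor is cohomological, and the stability under restriction to subgroups of ``cohomological'', ``all transfers surjective'', and invertibility of $|G|$) are routine.
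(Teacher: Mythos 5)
Your proposal is correct and follows essentially the same route as the paper: identify $\Omega^{1,\Tamb}_{\ell/k}(G/e)$ with $\Omega^1_{\ell(G/e)/k(G/e)}$ (because no nontrivial norms land in level $G/e$), then use the hypotheses to show the module of genuine differentials vanishes as soon as its bottom level does. The only cosmetic difference is that the paper reduces case (1) to case (2) by noting that cohomologicality plus invertibility of $|G|$ forces all transfers to be surjective, and then applies Frobenius reciprocity once to conclude that restrictions in every $\ell$-module are injective, whereas you treat the two cases by separate (equally valid) mechanisms.
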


Using \cref{thm:detecting-etale-at-bottom-level} we may generalize \cite[Theorem 4.4]{LRZ24}. Specifically, \cite[Theorem 4.4]{LRZ24} is the special case of the following result in which $K$ is a $C_n$-Kummer extension of a field $L$.

\begin{lettercor}(cf. \cref{cor:Galois-field-extension-gives-etale})
    Let $L$ be a $G$-Galois extension of a field $K$. Then
    \[ 
        \underline{K} \rightarrow \FP(L) 
    \] 
    is formally \'{e}tale. Under a mild technical condition ($\underline{K}$ satisfies the Hilbert basis theorem) it is \'{e}tale.
\end{lettercor}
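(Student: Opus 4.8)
The plan is to verify the hypotheses of \cref{thm:detecting-etale-at-bottom-level} for the $\underline{K}$-algebra $\FP(L)$ and then read off the conclusion at the bottom level, where everything reduces to classical Galois theory. First I would record the structure of $\FP(L)$: its value at $G/H$ is the fixed field $L^H$, the restriction $\Res^H_{H'}$ is the inclusion $L^H \hookrightarrow L^{H'}$, and the transfer $\Tr^H_{H'}\colon L^{H'} \to L^H$ is $x \mapsto \sum_{gH' \in H/H'} gx$. Since each element of $H = \mathrm{Gal}(L/L^H)$ restricts to an $L^H$-embedding of $L^{H'}$, and two elements restrict to the same embedding exactly when they lie in the same coset of $H' = \mathrm{Gal}(L/L^{H'})$, this transfer coincides with the field trace $\Tr_{L^{H'}/L^H}$. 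Because $L/K$ is separable, every subextension $L^{H'}/L^H$ is separable, so its trace form is nondegenerate and $\Tr_{L^{H'}/L^H}$ is surjective; hence all transfers in $\FP(L)$ are surjective. (Note $\FP(L)$ is also cohomological, but $|G|$ need not be invertible in $K$, so it is the second hypothesis of \cref{thm:detecting-etale-at-bottom-level} that we use.) One checks routinely that the evident inclusions $K = L^G \hookrightarrow L^H$ assemble into a Tambara functor map $\underline{K} \to \FP(L)$: on transfers it becomes multiplication by $[H:H']$ and on norms the $[H:H']$-st power map, matching the structure maps of $\underline{K}$.

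Next I would establish that $\FP(L)$ is a flat $\underline{K}$-module. Levelwise, each $L^H$ is a finite, hence free, module over $\underline{K}(G/H) = K$; combined with surjectivity of all transfers, the structural results on flat modules proved earlier apply and show $\FP(L)$ is flat over $\underline{K}$. For the vanishing of genuine K\"{a}hler differentials I would argue that every genuine $\underline{K}$-derivation $d\colon \FP(L) \to M$ vanishes: at level $G/e$ the map $d$ restricts to a $K$-linear derivation $L \to M(G/e)$, which is zero since $\Omega_{L/K} = 0$ ($L/K$ being separable); and for general $H$, surjectivity of $\Tr^H_e$ lets us write any $x \in L^H$ as $\Tr^H_e(y)$, whence $d(x) = \Tr^H_e\big(d(y)\big) = 0$ by compatibility of $d$ with transfers. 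Thus $\underline{\Omega}_{\FP(L)/\underline{K}} = 0$, and together with flatness this gives that $\underline{K} \to \FP(L)$ is formally \'{e}tale. Equivalently, this half of the statement follows from the portion of the proof of \cref{thm:detecting-etale-at-bottom-level} that does not invoke finite presentation.

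Finally, assume $\underline{K}$ satisfies the Hilbert basis theorem. Since $L/K$ is finite, $\FP(L)$ is a finitely generated $\underline{K}$-algebra --- it is generated already at level $G/e$ by any $K$-basis of $L$, since transfers then produce all of each $L^H$ --- and the Hilbert basis property upgrades finite generation to finite presentation. Now \cref{thm:detecting-etale-at-bottom-level}(2) applies: $\FP(L)$ is \'{e}tale over $\underline{K}$ if and only if $L = \FP(L)(G/e)$ is \'{e}tale over $K = \underline{K}(G/e)$, and a finite Galois extension of fields is \'{e}tale. I expect the flatness step to be the main obstacle, since flatness of a module over a Tambara functor is strictly stronger than levelwise flatness and relies on the earlier classification of flat modules; a secondary point requiring care is confirming, from Hill's definition, that a genuine derivation commutes with transfers, so that the bottom-level vanishing propagates to all levels.
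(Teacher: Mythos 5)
There is a genuine gap at the flatness step, and you have in fact flagged it yourself as ``the main obstacle'' without resolving it. You assert that levelwise freeness of each $L^H$ over $K$ together with surjectivity of transfers lets ``the structural results on flat modules proved earlier'' apply, but no such result exists: flatness of a module over a Tambara functor is not detected levelwise, and the only flatness-producing statement available is \cref{lem:free-k-modules-are-flat}, which requires knowing that the module is finitely generated projective (e.g.\ a sum of modules of the form $\Coind_H^G \Res_H^G \underline{K}$). The missing idea is the normal basis theorem: $L \cong K[G]$ as a $K[G]$-module, and since $\FP$ carries this to an isomorphism of $\underline{K}$-modules, one gets $\FP(L) \cong \FP\bigl((\Coind_e^G K)(G/e)\bigr) \cong \Coind_e^G K$, which is free and hence flat by \cref{lem:free-k-modules-are-flat}. (This identification also gives surjectivity of all transfers for free.) Without this or some substitute, the formal \'{e}taleness claim is not established, since \cref{thm:detecting-etale-at-bottom-level} takes flatness as a hypothesis.

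The remainder of your argument is sound and close to the paper's. Your derivation of transfer-surjectivity from nondegeneracy of the trace form of the separable extensions $L^{H'}/L^H$ is a legitimate alternative to reading it off from $\Coind_e^G K$, and your direct argument that genuine $\underline{K}$-derivations vanish (classical separability at level $G/e$, then propagation along surjective transfers using that a derivation is in particular a map of Mackey functors) is a valid unwinding of the relevant half of \cref{thm:detecting-etale-at-bottom-level}, which the paper instead obtains from \cref{lem:sufficient-conditions-for-all-modules-to-be-FP} via injectivity of restrictions in $\Omega^{1,\Tamb}_{\FP(L)/\underline{K}}$. The finite-generation and Hilbert-basis steps match the paper. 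So the only substantive repair needed is the flatness argument.
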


In a different direction, \cref{thm:detecting-etale-at-bottom-level} allows us to establish $G$-Galois descent for affine \'{e}tale group schemes along $\underline{R} \rightarrow \Coind_e^G R$.

\begin{lettercor}(cf. \cref{cor:Theorem-Q})
    Let $R$ be a ring such that $|G|$ is a unit in $R$. Under a mild technical condition ($\underline{R}$ satisfies the Hilbert basis theorem), the functor 
    \[ 
        \ev_{G/e} : \fet_{\underline{R}} \rightarrow (\fet_R)^{BG} 
    \] 
    is an equivalence of categories with inverse induced by $\FP$.
\end{lettercor}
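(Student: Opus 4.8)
The plan is to reduce the statement to the corresponding equivalence of underlying algebra categories and then pass to cogroup objects. Write $\mathcal{E}_{\underline{R}}$ for the category of finite \'{e}tale $\underline{R}$-algebras (forgetting Hopf structures) and $\mathcal{E}_{R}$ for the category of finite \'{e}tale $R$-algebras; both are symmetric monoidal under the relative tensor product, and a cogroup object of $\mathcal{E}_{\underline R}$ is exactly an object of $\fet_{\underline R}$, while a cogroup object of $(\mathcal{E}_R)^{BG}$ is the same datum as a $G$-equivariant cogroup object of $\mathcal{E}_R$, i.e.\ an object of $(\fet_R)^{BG}$. I would show that
\[
    \ev_{G/e}\colon \mathcal{E}_{\underline{R}} \longrightarrow (\mathcal{E}_{R})^{BG}
\]
is an equivalence of symmetric monoidal categories with pseudo-inverse induced by $\FP$; since the bottom level of a box product of Tambara functors is the tensor product of the bottom levels, $\ev_{G/e}$ is strong symmetric monoidal, so this equivalence passes to cogroup objects, which gives exactly the asserted equivalence $\fet_{\underline R}\simeq(\fet_R)^{BG}$.

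The bulk of the work is to show that $\FP$ sends $(\mathcal{E}_R)^{BG}$ into $\mathcal{E}_{\underline R}$. Fix $S\in(\mathcal{E}_R)^{BG}$. Since $S$ is finite projective over $R$ and $|G|$ is invertible, $S$ is also a projective module over the group algebra $R[G]$, hence a $G$-equivariant retract of a finite sum of copies of the regular representation. Using that $\Coind_e^G R \cong \FP(\mathrm{Map}(G,R))$ and that $\mathrm{Map}(G,R)$ has underlying $R[G]$-module the regular representation, applying the additive functor $\FP$ exhibits $\FP(S)$ as a retract, in $\underline{R}$-modules, of $(\Coind_e^G R)^{\oplus m}$ for some $m$. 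By the case $H=e$ of \cref{thm:coind-are-etale}, $\underline{R}\to\Coind_e^G R$ is \'{e}tale, and a direct check shows it is module-finite (the indicator functions at level $G/e$ generate $\Coind_e^G R$ over $\underline R$ via transfers); hence $(\Coind_e^G R)^{\oplus m}$, and with it $\FP(S)$, is a finite flat $\underline{R}$-module. Moreover $\FP(S)$ is finitely generated as an $\underline R$-algebra --- a finite set of $R$-algebra generators of $S$, regarded as elements of $\FP(S)(G/e)=S$, generate $\FP(S)$ over $\underline R$, since every element of $S^H$ equals $\tfrac{1}{|H|}\tr_e^H$ of a polynomial in them --- so the Hilbert basis hypothesis on $\underline{R}$ makes $\FP(S)$ finitely presented. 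Finally, $\FP(S)$ is cohomological ($\tr_K^H\res_K^H$ is multiplication by $[H:K]$ on $S^H$) and $|G|$ is invertible in $\FP(S)(G/G)=S^G$, so part (1) of \cref{thm:detecting-etale-at-bottom-level} applies: $\underline R\to\FP(S)$ is \'{e}tale because $\FP(S)(G/e)=S$ is \'{e}tale over $R=\underline R(G/e)$. Thus $\FP(S)\in\mathcal{E}_{\underline R}$, and one also checks $\FP$ preserves morphisms.

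It remains to see that $\ev_{G/e}$ and $\FP$ are mutually inverse. One composite is immediate: $\ev_{G/e}\FP(S)=\FP(S)(G/e)=S^e=S$ with its given $G$-action. For the other, let $A\in\mathcal{E}_{\underline R}$. Modules over the constant Green functor $\underline{R}$ are cohomological, so $\tr_K^H\res_K^H=[H:K]$ in $A$; since $|G|$ is invertible, $\tfrac1{|H|}\tr_e^H$ retracts $\res_e^H\colon A(G/H)\hookrightarrow A(G/e)$, and by the Mackey double coset formula $\res_e^H\tr_e^H=\sum_{h\in H}c_h$ its image is precisely $A(G/e)^H$. Under the resulting isomorphism $A(G/H)\cong A(G/e)^H$, the Mackey and Tambara double coset formulas identify the restrictions with subring inclusions, the transfers $\tr_K^H$ with the trace maps $\sum_{[h]\in H/K}h(-)$, and the norms $\nm_K^H$ with the product maps $\prod_{[h]\in H/K}h(-)$ --- that is, $A\cong\FP(A(G/e))$, naturally in $A$. (In particular $A(G/e)$ is a finite \'{e}tale $R$-algebra with compatible $G$-action, confirming that $\ev_{G/e}$ does land in $(\mathcal{E}_R)^{BG}$.) Together with the previous paragraph and strong symmetric monoidality of $\ev_{G/e}$, this establishes the symmetric monoidal equivalence $\mathcal{E}_{\underline R}\simeq(\mathcal{E}_R)^{BG}$, and passing to cogroup objects proves the corollary.

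The main obstacle I anticipate is the bundle of facts in the second paragraph establishing that $\FP(S)$ is a flat, finitely presented $\underline{R}$-module: realizing $\FP(S)$ as a retract of copies of $\Coind_e^G R$ and controlling its algebra generators. The descent itself is then essentially formal, the conceptual content having been absorbed into \cref{thm:detecting-etale-at-bottom-level}. The invertibility of $|G|$ is used in exactly two essential ways --- through the averaging idempotents $\tfrac1{|H|}\sum_{h\in H}h$ (both to split $\res_e^H$ in the third paragraph and to make $S$ a projective $R[G]$-module in the second) and through the hypotheses of \cref{thm:detecting-etale-at-bottom-level} --- while the Hilbert basis hypothesis on $\underline{R}$ serves only to upgrade ``finitely generated'' to ``finitely presented'' for $\FP(S)$.
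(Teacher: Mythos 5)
Your proposal is correct and follows essentially the same route as the paper: reduce to the equivalence on underlying algebras via strong symmetric monoidality of $\ev_{G/e}$, use \cref{thm:detecting-etale-at-bottom-level} to transfer \'{e}taleness, invoke the Hilbert basis hypothesis to upgrade finite generation to finite presentation, and check the unit and counit via the averaging idempotent. The only real difference is that you verify flatness of $\FP(S)$ explicitly (as a retract of copies of $\Coind_e^G R$), a hypothesis of \cref{thm:detecting-etale-at-bottom-level} that the paper's proof leaves implicit in the monoidal equivalence $R[G]\text{-}\Mod \simeq \underline{R}\text{-}\Mod$; this is a welcome addition rather than a divergence.
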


For example, if $\A_G$ satisfies the Hilbert basis theorem and $R$ is Noetherian, then $\underline{R}$ satisfies the Hilbert basis theorem \cite{Sun25}. $\A_G$ is known to satisfy the Hilbert basis theorem for many groups: $C_p$ \cite{4DS}, Dedekind groups \cite{Sun25}, and more (see \cite{Sun25} for a more complete list).

On the other hand, in modular characteristic and when $G = C_p$ we obtain the following. It is of independent interest, and it may also be useful in the study of \'{e}tale $\underline{\F}$-algebras when $\F$ has characteristic $p$.

\begin{letterthm}(cf. \cref{thm:flat-implies-free-for-constant-F_p})
    Let $\F$ be any field of characteristic $p$ and $G = C_p$. The following conditions are equivalent for a finitely generated $\underline{\F}$-module $M$: 
    \begin{enumerate}
        \item $M$ is flat
        \item $M$ is projective
        \item $M$ is free.
    \end{enumerate}
\end{letterthm}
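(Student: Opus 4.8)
The plan is to reduce to standard ring theory by identifying $\underline{\F}$-modules with modules over a finite-dimensional $\F$-algebra. Since $G = C_p$ has only the two subgroups $e$ and $C_p$, the free $\underline{\F}$-modules are exactly the finite direct sums of copies of $\underline{\F} = \underline{\F}[C_p/C_p]$ and $\underline{\F}[C_p/e]$; in particular $P := \underline{\F} \oplus \underline{\F}[C_p/e]$ is a projective generator of the category of $\underline{\F}$-modules, so that category is equivalent to $\mathrm{Mod}\text{-}\Lambda$ for $\Lambda := \mathrm{End}_{\underline{\F}}(P)$. A direct Mackey-functor computation shows $\Lambda$ is finite-dimensional over $\F$: it is the path algebra of the quiver with vertices $v,w$, a loop $\xi$ at $v$, and arrows $\tau\colon v\to w$, $\rho\colon w\to v$, modulo $\xi\rho = \tau\xi = \tau\rho = 0$ and $\rho\tau = \xi^{p-1}$, the last relation being the double-coset formula (the norm $\sum_{g\in C_p}g$ equals $\xi^{p-1}$ on the regular representation, using $\mathrm{char}\,\F = p$). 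Under this equivalence, finitely generated modules correspond to finite-dimensional $\Lambda$-modules and flatness is preserved.

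The implications (3) $\Rightarrow$ (2) $\Rightarrow$ (1) are formal. For (1) $\Rightarrow$ (2): a finitely generated flat module over the Artinian ring $\Lambda$ is projective. If $N$ is such a module, pick a projective cover $q\colon Q \twoheadrightarrow N$ with kernel $K \subseteq \mathrm{rad}\,Q$; flatness gives $\mathrm{Tor}_1^\Lambda(N,\Lambda/\mathrm{rad}\,\Lambda) = 0$, so $K \otimes_\Lambda \Lambda/\mathrm{rad}\,\Lambda \hookrightarrow Q \otimes_\Lambda \Lambda/\mathrm{rad}\,\Lambda$; but $K$ maps to zero in $Q/\mathrm{rad}\,Q$, forcing $K = K\cdot\mathrm{rad}\,\Lambda$ and hence $K = 0$ by Nakayama, so $N \cong Q$ is projective. (Alternatively, Artinian rings are perfect, so every flat module is projective by Bass's theorem.) Thus every finitely generated flat $\underline{\F}$-module is projective.

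For (2) $\Rightarrow$ (3) it suffices to show the two free modules $\underline{\F}$ and $\underline{\F}[C_p/e]$ are indecomposable and non-isomorphic: then, by Krull--Schmidt for finitely generated $\underline{\F}$-modules, they are precisely the indecomposable projective $\underline{\F}$-modules, so any finitely generated projective is a finite sum of copies of them, i.e.\ free. Non-isomorphism is visible from $\F$-dimensions ($2$ versus $p+1$). For indecomposability one computes endomorphism rings: $\mathrm{End}_{\underline{\F}}(\underline{\F}) = \underline{\F}(C_p/C_p) = \F$, and $\mathrm{End}_{\underline{\F}}(\underline{\F}[C_p/e]) \cong \F[\xi]/(\xi^p)$, the isomorphism sending $\xi$ to the endomorphism given by multiplication by $\sigma - 1$ on the level-$C_p/e$ component (for $\sigma$ a generator of $C_p$) and zero on the level-$C_p/C_p$ component; both rings are local. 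It is exactly here that $\mathrm{char}\,\F = p$ is essential---the transfer out of $\underline{\F}[C_p/e]$ is the augmentation, whose composite with restriction is the norm $\xi^{p-1}$, nilpotent rather than invertible; over a base field of characteristic coprime to $p$ the module $\underline{\F}[C_p/e]$ splits and the theorem fails, e.g.\ $\underline{\F}$ itself is then a non-free projective. This closes the cycle.

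The main obstacle is the setup in the first paragraph---verifying that $\underline{\F} \oplus \underline{\F}[C_p/e]$ is a projective generator and identifying $\Lambda$, together with the endomorphism-ring computation for $\underline{\F}[C_p/e]$---after which (1) $\Rightarrow$ (2) and (2) $\Rightarrow$ (3) are routine facts about finitely generated modules over Artinian rings. One could equally bypass $\Lambda$ and argue directly from an explicit classification of finitely generated $\underline{\F}$-modules, but the structure of the argument would be the same.
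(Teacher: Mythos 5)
Your Morita-theoretic setup is fine as far as it goes: $\underline{\F}\oplus\underline{\F}[C_p/e]$ is a compact projective generator, your quiver presentation of $\Lambda=\mathrm{End}(P)$ is correct, and your argument for (2) $\Leftrightarrow$ (3) --- locality of the two endomorphism rings plus Krull--Schmidt, so that the indecomposable projectives are exactly $\underline{\F}$ and $\underline{\F}[C_p/e]$ --- is a legitimate alternative to the paper's appeal to \cite{CW25} in \cref{cor:flat-proj-and-free-are-equiv-for-constant-F_p}. The gap is at the heart of the theorem, the implication (1) $\Rightarrow$ (2), where you assert that ``flatness is preserved'' under the equivalence $\underline{\F}\text{-}\Mod\simeq\Mod_\Lambda$. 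Flatness of an $\underline{\F}$-module is defined by exactness of $M\boxtimes_{\underline{\F}}-$, and the box product is an \emph{extra} symmetric monoidal structure on the abelian category which the (non-monoidal) Morita equivalence does not carry to $\otimes_\Lambda$. There is no formal reason the two flatness notions agree: the paper explicitly notes that in a general bilinear symmetric monoidal abelian category even projectives need not be flat, and \cref{lem:free-k-modules-are-flat} has to do real work (exactness of $\Coind_H^G$ and $\Res_H^G$) just to get projective $\Rightarrow$ box-flat --- so even your claim that (2) $\Rightarrow$ (1) is ``formal'' is only formal on the $\Lambda$-side. The direction you actually need, box-flat finitely generated $\Rightarrow$ $\Lambda$-flat finitely generated, is equivalent (over the Artinian ring $\Lambda$) to box-flat $\Rightarrow$ projective, which is precisely the statement being proved; as written the argument is circular at exactly the hard step.

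Closing this gap requires engaging with $\boxtimes_{\underline{\F}}$ directly, which is what the paper's proof of \cref{thm:flat-implies-free-for-constant-F_p} does: flatness forces the restriction in $M$ to be injective (\cref{lem:flat-over-MRC-implies-MRC}, proved by box-tensoring $M$ against the injection $\underline{\F}\to\Coind_e^{C_p}\F$); one then splits off the summands of $M$ isomorphic to $\Coind_e^{C_p}\F$ using their injectivity, observes that the transfer on what remains is multiplication by $(\sigma-1)^{p-1}=0$, and finally box-tensors $M$ against the inclusion $D\hookrightarrow\underline{\F}$ of the module with $D(C_p/C_p)=0$, computing $(D\boxtimes M)(C_p/C_p)\cong M(C_p/e)_{C_p}/\Res_e^{C_p}(M(C_p/C_p))$ to conclude the restriction is also surjective, hence that $M$ is constant. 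Some such explicit box-product computation (or an independent proof that box-flatness coincides with $\Lambda$-flatness for finitely generated modules) is unavoidable; your proposal contains neither.
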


Besides flatness results, the other two ingredients in the proofs of \cref{thm:modular-characteristic-characterizing-finite-etale,cor:finite-etale-group-schemes-modular-characteristic} are the classification of finite \'{e}tale extensions of algebraically closed fields, and \cite[Propositions 3.6 and 3.8]{Wis25a}, which allow one to deduce some of the structure of a Tambara functor $k$ from its bottom level $k(G/e)$. As the results of \cite{Wis25a} that we use are false for Green functors, we do not expect any variant of our argument to produce similar theorems for Green functors in modular characteristic.

\begin{remark}
    In \cite{Mat17} Mathew establishes a base-change formula for topological Hochschild homology along \'{e}tale maps of $\mathbb{E}_\infty$ ring spectra. An equivariant analogue of this result would be desirable in the emerging picture of equivariant trace methods, as equivariant $\mathrm{THH}$ (such as that considered in \cite{CGK25}) is difficult to compute. Note that \cite[Theorem 4.19]{MQS24} establishes a connection between Hochschild homology and the K\"{a}hler differentials used to define \'{e}taleness in the equivariant algebra setting.
    
    A map of commutative ring spectra is \'{e}tale if two conditions hold; one of them is that the induced map on $\pi_0$ is an \'{e}tale map of ordinary rings. A natural $G$-equivariant generalization of this property in the situation of a map of $G$-$\mathbb{E}_\infty$ ring $G$-spectra is to ask for the induced map on $\underline{\pi}_0$ to be an \'{e}tale map of Tambara functors. The author proposes that there is a $G$-equivariant analogue of Mathew's base-change formula which involves such a hypothesis; it seems like the main obstruction in establishing such a result is ensuring that an equivariant analogue of \cite[Proposition 7.5.1.15]{Lur17} goes through.
\end{remark}

\vspace{3mm}

\textbf{Acknowledgments.} The author thanks Mike Hill for a wealth of insightful suggestions, and for explaining the proof of \cref{thm:Green-etale-implies-Tambara-etale}. Additionally, the author thanks David Chan, Ayelet Lindenstrauss, Jackson Morris, and Birgit Richter for helpful conversations. Lastly, the author thanks Yuchen Liu for teaching a wonderful course in algebraic geometry that was very inspirational.

\section{Recollections on the commutative algebra of Tambara functors}

In this section we collect some known results in equivariant algebra and review the definition of \'{e}taleness. We assume the reader is familiar with Mackey, Green, and Tambara functors. A module over a Tambara functor is a module over the underlying Green functor.

Throughout, $G$ is a finite group. We start by collecting some results on modules.

\begin{lemma}\label{lem:sufficient-conditions-for-all-modules-to-be-FP}
	Let $\ell$ be a Tambara functor such that either
	\begin{enumerate}
		\item $\ell$ is cohomological and $|G|$ is invertible in $\ell(G/G)$, or
		\item all transfers in $\ell$ are surjective
	\end{enumerate}
	then all restrictions in every $\ell$-module are injective.
\end{lemma}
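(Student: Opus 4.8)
The plan is to reduce both cases to a single assertion: that the unit $1_\ell \in \ell(G/K)$ lies in the image of the transfer $\tr_H^K \colon \ell(G/H) \to \ell(G/K)$ for every pair of subgroups $H \subseteq K \subseteq G$. Granting this, injectivity of restrictions follows formally from the module form of Frobenius reciprocity. Indeed, let $M$ be any $\ell$-module, fix $H \subseteq K$, write $1_\ell = \tr_H^K(a)$ with $a \in \ell(G/H)$, and suppose $m \in M(G/K)$ satisfies $\res_H^K(m) = 0$. Then
\[
    m = 1_\ell \cdot m = \tr_H^K(a) \cdot m = \tr_H^K\!\bigl(a \cdot \res_H^K(m)\bigr) = \tr_H^K(0) = 0,
\]
using the projection formula $\tr_H^K(a)\cdot m = \tr_H^K(a \cdot \res_H^K(m))$ for the $\ell$-module $M$. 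Hence $\res_H^K$ is injective; since every restriction map in a Mackey functor is, up to a conjugation isomorphism, of the form $\res_H^K$ for a subgroup inclusion, this gives the lemma.

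It remains to verify the transfer-surjectivity assertion in each case. Under hypothesis (2) it is immediate, since surjectivity of $\tr_H^K$ on $\ell$ is exactly the hypothesis applied to the pair $H \subseteq K$. Under hypothesis (1), I would use that $\ell$ being cohomological means precisely $\tr_H^K \circ \res_H^K = [K:H]\cdot\mathrm{id}$ on $\ell(G/K)$. Because $[K:H]$ divides $|G|$, which is invertible in $\ell(G/G)$, and because restriction maps are unital ring homomorphisms, $[K:H]$ is invertible in $\ell(G/H)$ and in $\ell(G/K)$. Applying the projection formula to $1_\ell$ and the element $[K:H]^{-1}$ then yields
\[
    \tr_H^K\!\bigl([K:H]^{-1}\bigr) = [K:H]^{-1}\cdot \tr_H^K\!\bigl(\res_H^K(1_\ell)\bigr) = [K:H]^{-1}\cdot [K:H]\cdot 1_\ell = 1_\ell ,
\]
so $1_\ell \in \mathrm{im}(\tr_H^K)$ with witness $a = [K:H]^{-1}$.

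I do not anticipate a genuine obstacle here; the only points requiring care are bookkeeping of which level ($H$ versus $K$) each element and each inverse lives on, and invoking the correct variant of the projection formula for $\ell$-modules (as opposed to the Green functor $\ell$ itself). As an alternative packaging of case (1), one could first record the standard fact that every module over a cohomological Green functor is again cohomological, i.e.\ $\tr_H^K \res_H^K = [K:H]\cdot\mathrm{id}$ on $M(G/K)$ — which is itself proved by applying the projection formula to $1_\ell$ — and then conclude injectivity of $\res_H^K$ directly from invertibility of $[K:H]$ on the $\ell(G/K)$-module $M(G/K)$; I would mention this route briefly.
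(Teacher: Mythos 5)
Your proof is correct and follows essentially the same route as the paper: reduce both hypotheses to the statement that $1 \in \ell(G/K)$ lies in the image of $\Tr_H^K$, and then conclude via Frobenius reciprocity that $m = \Tr_H^K(a \cdot \Res_H^K(m))$ kills any $m$ with vanishing restriction. The only difference is that you spell out the computation $\Tr_H^K([K:H]^{-1}) = 1$ in the cohomological case, which the paper leaves implicit when it asserts that hypothesis (1) reduces to hypothesis (2).
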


\begin{proof}
	If $|G|$ is invertible in $\ell(G/G)$, then it is invertible in each ring $\ell(G/H)$. The cohomological assumption then implies that all transfers are surjective, so the first hypothesis is a special case of the second hypothesis.
	
	Assume all transfers in $\ell$ are surjective and let $H \subset K$ be arbitrary. Then $1 \in \ell(G/K)$ is the transfer of some $y \in \ell(G/H)$. If $M$ is an $\ell$-module, Frobenius reciprocity implies that the restriction $\Res_H^K$ in $M$ is injective: 
    \[ 
        m = 1 \cdot m = \Tr_H^K(y) \cdot m = \Tr_H^K(y \cdot \Res_H^K(m)) .
    \]
\end{proof}

\begin{definition}[\cite{CW25}]
    A Tambara functor $k$ is \emph{relatively finite dimensional} if each restriction $\Res_H^G$ is a finite ring map ($k(G/H)$ is a finitely generated $k(G/G)$-module).
\end{definition}

\begin{definition}
	We say that a map $k \rightarrow \ell$ of Tambara functors is \emph{finite} if $\ell$ is a finitely generated $k$-module.
\end{definition}

If $k$ is relatively finite dimensional, then by \cite[Proposition 3.31]{CW25} $k \rightarrow \ell$ is finite if and only if it is levelwise finite.

\begin{definition}
    Let $k$ be a Tambara or Green functor and $M$ a $k$-module. We say $M$ is flat if the functor $M \boxtimes_k -$ is exact.
\end{definition}

If $X$ is a finite $G$-set, We will use the notation $\ev_{X}$ to describe either the functor $k \mapsto k(X)$ from Tambara functors to rings, or from Tambara functors to rings with an action of the Weyl group $\mathrm{Aut}(X)$. It will be clear from context which of these two we mean.

\begin{definition}
    Let $k$ be a Tambara functor. The free polynomial $k$-algebra on a finite $G$-set $X$ is the representing object of the functor 
    \[ 
        \ev_X : k \text{-} \Alg \rightarrow \Ring . 
    \]
    We denote this $k$-algebra by $k[y_X]$ and say $y_X$ is a generator in level $X$.
\end{definition}

\begin{definition}
    We say that a $k$-algebra $R$ is \emph{finitely presented} if $R$ is isomorphic to a coequalizer
    \[ 
        \mathrm{Coeq}(k[x_i]_{i \in I} \rightrightarrows k[x_j]_{j \in J}) 
    \]
    of free $k$-algebras on finitely many generators.
\end{definition}

Equivalently (by \cite[Proposition 4.3]{SSW24}) $R$ is finitely presented if and only if it is \emph{compact} in the category of $k$-algebras, i.e. whenever $F : \mathsf{D} \rightarrow k \text{-} \Alg$ is a filtered diagram in $k$-algebras, the canonical map
\[ 
    \Hom_k(R, \colim_{d \in \mathsf{D}} F(d)) \rightarrow \colim_{d \in \mathsf{D}} \Hom_k(R,F(d))
\] 
is a bijection of sets. From this description it follows that free polynomial $k$-algebras are finitely presented, and any finite colimit of finitely presented $k$-algebras is finitely presented.

\begin{definition}
    We say that a Tambara functor $k$ satisfies the \emph{Hilbert basis theorem} if every ideal of every free polynomial $k$-algebra on finitely many generators is finitely generated (equivalently, every free finitely generated polynomial $k$-algebra satisfies the ascending chain condition on ideals, i.e. is Noetherian).
\end{definition}

\cite[Corollary 3.12]{4DS} and \cite[Theorems A and C]{Sun25} establish fairly general sufficient criteria for a Tambara functor to satisfy the Hilbert basis theorem. However, there exists finite groups $G$ such that it is not currently known whether or not the Burnside Tambara functor $\A_G$ satisfies the Hilbert basis theorem.

If a $k$-algebra $R$ receives a surjection from a free polynomial $k$-algebra on finitely many generators, we say that $R$ is finitely generated over $k$. Finitely presented implies finitely generated; the converse is true if and only if $k$ satisfies the Hilbert basis theorem.

Next, we review of the definition and first properties of \'{e}tale morphisms of Tambara functors, following the original definition due to Hill \cite{Hil17} and the treatment by Lindenstrauss, Richter, and Zou \cite{LRZ24}. We start by working towards the definition of genuine K\"{a}hler differentials of a morphism of Tambara functors.

\begin{definition}[\cite{Hil17}]
	Let $I$ be a Tambara ideal of a Tambara functor $R$. Define $I^{>1}$ to be the sub-ideal of $I$ generated by all nontrivial norms (including products) of elements of $I$. Explicitly, $I^{>1}(G/H)$ is generated by $I(G/H)^2$ along with the images of all norms $I(G/K) \rightarrow I(G/H)$ for $K$ conjugate to a proper subgroup of $H$.
\end{definition}

\begin{definition}[\cite{Hil17}]
	Let $k \rightarrow R$ be a morphism of Tambara functors, let $I$ be the kernel of the map 
    \[ 
        R \boxtimes_k R \rightarrow R \mathrm{,} 
    \] 
    and define the genuine K\"{a}hler differentials by 
    \[ 
        \Omega^{1,\Tamb}_{R/k} := I/I^{>1} . 
    \]
\end{definition}

\begin{definition}[\cite{Hil17}]
	A morphism $k \rightarrow R$ of Tambara functors is \emph{formally \'{e}tale} if $\Omega^{1,\Tamb}_{R/k} = 0$ and $R$ is flat as a $k$-module.
\end{definition}

As in the classical case, the genuine K\"{a}hler differentials support the universal derivation.

\begin{definition}[{\cite[Definition 4.1]{Hil17}}]\label{def:genuine-k-derivation}
    Let $R$ be a $k$-algebra and $M$ an $R$-module. A $k$-module morphism $d : R \rightarrow M$ is a \emph{genuine $k$-derivation} if
    \begin{enumerate}
        \item for all $H \subset G$, $r_1, r_2 \in R(G/H)$, 
        \[ 
            d(r_1 \cdot r_2) = r_1 \cdot d(r_2) + d(r_1) \cdot r_2 ,
        \] 
        \item for all $H \subset K \subset G$, $r \in R(G/H)$, 
        \[ 
            d(\Nm_H^K r) = \Tr_H^K \Nm_{d_2} \Res_{d_1}(r) \cdot d(r) 
        \] 
        where $d_i$ is the restriction to the compliment of the diagonal of the projection onto the $i$th factor of $K/H \times K/H$, and
        \item $d$ vanishes on the image of $k$ in $R$.
    \end{enumerate}
    The set of genuine $k$-derivations from $R$ to $M$ is denoted $\Der_k(R,M)$.
\end{definition}

\begin{theorem}[{\cite[Theorem 5.7]{Hil17}}]
    There is a natural isomorphism 
    \[ 
        \Der_k(R,-) \cong \Hom_R(\Omega^{1,\Tamb}_{R/k},-) 
    \] 
    of functors.
\end{theorem}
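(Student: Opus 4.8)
The plan is to imitate the classical proof that Kähler differentials corepresent derivations, carrying along the extra norm data. Write $\iota_L,\iota_R\colon R\to R\boxtimes_k R$ for the two canonical $k$-algebra maps and $\mu\colon R\boxtimes_k R\to R$ for the map of the definition, so that $I=\ker\mu$ and $\mu\iota_L=\mu\iota_R=\mathrm{id}_R$. First I would build the universal genuine derivation $d\colon R\to\Omega^{1,\Tamb}_{R/k}$ by setting $d(r)=[\iota_L(r)-\iota_R(r)]$, which lands in $I/I^{>1}$. Verifying that $d$ is a genuine $k$-derivation is the first substantive step: condition (3) is immediate since $\iota_L$ and $\iota_R$ agree on the image of $k$; the Leibniz rule (1) follows by expanding $\iota_L(r_1r_2)-\iota_R(r_1r_2)$ and discarding $(\iota_L r_1-\iota_R r_1)(\iota_L r_2-\iota_R r_2)\in I^2\subseteq I^{>1}$; and for (2), writing $\delta=\iota_L(r)-\iota_R(r)\in I$ one expands $\Nm_H^K(\iota_L r)=\Nm_H^K(\iota_R r+\delta)$ by Tambara reciprocity. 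Every summand in which $\delta$ occurs with multiplicity at least two, or inside a nontrivial norm, lies in $I^{>1}$, so modulo $I^{>1}$ one is left with $\Nm_H^K(\iota_R r)+\Tr_H^K(\Nm_{d_2}\Res_{d_1}(\iota_R r)\cdot\delta)$; hence $d(\Nm_H^K r)=[\Tr_H^K(\Nm_{d_2}\Res_{d_1}(\iota_R r)\cdot\delta)]$, and using that $\iota_R$ is a Tambara map and that the $R$-module structure on $\Omega^{1,\Tamb}_{R/k}$ is well-defined this equals $\Tr_H^K(\Nm_{d_2}\Res_{d_1}(r)\cdot d(r))$, which is exactly condition (2).

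Post-composition with $d$ defines a natural transformation $\Hom_R(\Omega^{1,\Tamb}_{R/k},M)\to\Der_k(R,M)$, $\psi\mapsto\psi\circ d$; here I record that $\Omega^{1,\Tamb}_{R/k}$ does carry a genuine $R$-module structure, because the left and right $R$-actions on $I$ coming from $\iota_L$ and $\iota_R$ differ by a map carrying $I$ into $I^2\subseteq I^{>1}$, and that post-composition with an $R$-linear map preserves each of the three properties defining a genuine derivation. It remains to invert this transformation, i.e. to show that every genuine derivation factors uniquely through $d$.

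Given a genuine derivation $D\colon R\to M$, I would form the square-zero extension $R\oplus M$: the underlying Mackey functor $R\oplus M$, with multiplication $(r,m)(r',m')=(rr',\,rm'+r'm)$ and norms $\Nm_H^K(r,m)=\bigl(\Nm_H^K r,\ \Tr_H^K(\Nm_{d_2}\Res_{d_1}(r)\cdot m)\bigr)$. The main obstacle is checking that this is a Tambara functor at all --- multiplicativity of the norms, their compatibility with restriction and with the Weyl-group actions, the reciprocity formula for a norm of a transfer, and the composition law $\Nm_K^L\circ\Nm_H^K=\Nm_H^L$ --- a computation that is precisely what forces the norm formula above and hence axiom (2) in the definition of a genuine derivation. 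Granting it, $M$ becomes a square-zero ideal ($M\cdot M=0$ and all nontrivial norms of elements of $M$ vanish, so $M^{>1}=0$), and both $r\mapsto(r,0)$ and $r\mapsto(r,D(r))$ are $k$-algebra sections of the projection $R\oplus M\to R$; for the second, the ring part is the Leibniz rule and the norm part is axiom (2). By the universal property of $R\boxtimes_k R$ as the coproduct in $k$-algebras these two sections induce a map $\Phi\colon R\boxtimes_k R\to R\oplus M$ lying over $R$, so $\Phi(I)\subseteq M$; since $\Phi$ is a Tambara map and $M^{>1}=0$, $\Phi|_I$ descends to an $R$-module map $\overline\Phi\colon\Omega^{1,\Tamb}_{R/k}\to M$ with $\overline\Phi\circ d=D$ (up to a sign fixed by the ordering of the two sections). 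Finally, $D\mapsto\overline\Phi$ is two-sided inverse to $\psi\mapsto\psi\circ d$ once one knows that $\Omega^{1,\Tamb}_{R/k}$ is generated as an $R$-module by $d(R)$; this follows because $R\boxtimes_k R$ is generated as an algebra over $\iota_R(R)$ by $\iota_L(R)$ and, by the reciprocity expansions above, transfers and norms of elements of $I$ reduce modulo $I^{>1}$ to the $R$-span of $d(R)$. Naturality in $M$ is clear from the construction, so the only real work is the Tambara-functor axioms for $R\oplus M$ --- equivalently, the complete verification of axiom (2) for $d$.
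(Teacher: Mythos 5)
The paper does not prove this statement; it is quoted verbatim from \cite{Hil17}, so the only meaningful comparison is with Hill's original argument, and your outline is exactly that argument: the universal derivation $r\mapsto[\iota_L(r)-\iota_R(r)]$ in one direction and the square-zero extension $R\oplus M$ with twisted norms in the other. Your skeleton is correct and you correctly locate where all the content lives, but you also defer precisely those two verifications --- the Tambara-reciprocity expansion showing that the only cross term of $\Nm_H^K(\iota_R r+\delta)$ surviving modulo $I^{>1}$ is $\Tr_H^K\bigl(\Nm_{d_2}\Res_{d_1}(\iota_R r)\cdot\delta\bigr)$, and the check that $R\oplus M$ satisfies the Tambara axioms --- so what you have is a faithful proof outline rather than a complete proof. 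One small improvement: for the generation claim at the end, rather than arguing modulo $I^{>1}$, note that every element of $(R\boxtimes_k R)(G/H)$ is a sum of transfers $\Tr_K^H(\iota_L(a)\iota_R(b))$, so subtracting $\iota_R(\mu(x))=0$ exhibits any $x\in I(G/H)$ as a sum of transfers of $R$-multiples of the elements $\iota_L(a)-\iota_R(a)$ on the nose; this gives the needed generation (and the uniqueness of $\overline\Phi$) without invoking the norm expansions a second time.
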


\begin{definition}
	A morphism $k \rightarrow R$ of Tambara functors is \emph{\'{e}tale} if it is formally \'{e}tale and finitely presented.
\end{definition}

There is an analogous story for Green functors, with two main differences: First, in defining Green functor derivations, we only ask for conditions (1) and (3) in \cref{def:genuine-k-derivation} to hold (since condition (2) just does not make sense). Second, the K\"{a}hler differentials of a Green functor morphism $k \rightarrow R$ are constructed by setting
\[ 
    I := \mathrm{Ker}(R \boxtimes_k R \rightarrow R) 
\] 
and then defining
\[ 
    \Omega^{1,\Green}_{R/k} := I/I^2 . 
\]
With these definitions, $\Omega^{1,\Green}_{R/k}$ carries the universal (Green functor) $k$-derivation of $R$.

We record the following unpublished result of Mike Hill, Tyler Lawson, and Dylan Wilson. The proof was communicated to the author by Mike Hill and is included for the reader's convenience.

\begin{theorem}[Hill--Lawson--Wilson]\label{thm:Green-etale-implies-Tambara-etale}
    Let $k \rightarrow R$ be a morphism of Tambara functors. If $\Omega^{1,\Tamb}_{R/k} = 0$, then $\Omega^{1,\Green}_{R/k} = 0$. Consequently $k \rightarrow R$ is a formally \'{e}tale map of Tambara functors if and only if it is a formally \'{e}tale map of Green functors.
\end{theorem}

\begin{proof}
    Since $\Omega^{1,\Tamb}_{R/k}$ is a quotient of $\Omega^{1,\Green}_{R/k}$, we obtain a short exact sequence of $R$-modules 
    \[ 
        0 \rightarrow M \rightarrow \Omega^{1,\Green}_{R/k} \rightarrow \Omega^{1,\Tamb}_{R/k} \rightarrow 0
    \] 
    by defining $M$ to be the kernel. Since kernels and cokernels of $R$-modules are computed levelwise, it suffices to prove by induction on $|H|$ that each $M(G/H)$ is zero.

    In the base-case, note $\Omega^{1,\Green}_{R/k}(G/e)$ and $\Omega^{1,\Tamb}_{R/k}(G/e)$ are both naturally isomorphic to the ordinary K\"{a}hler differentials $\Omega^1_{R(G/e)/k(G/e)}$ (cf. the argument in \cref{lem:bottom-level-of-Kahler-differentials} below). Thus $M(G/e) = 0$.
    
    Now observe that $M(G/H)$ is generated by elements which are norms (using the Tambara structure on $R \boxtimes_k R$) of elements of $I(G/L)$, where $|L| < |H|$. By the inductive hypothesis that $\Omega^{1,\Green}_{R/k}(G/L) = 0$ for such $L$, any element of $I(G/L)$ is a sum over subconjugates $L'$ of $L$ of proper transfers of elements of $I(G/L')^2$. It suffices to observe each summand is zero in $M(G/H)$. 
    
    The exponential formula implies that the norm of any such sum is a sum of norms and proper transfers. The norm summands are zero in $M(G/H)$ because monoidality of the norm implies $\mathrm{Nm}_{L'}^H(I(G/L')^2) \subset I(G/H)^2$ (which is zero in $M(G/H) \subset \Omega^{1,\Green}_{R/k}(G/H)$). The proper transfer summands may be regarded as proper transfers in $M$, which are zero in $M(G/H)$ by our inductive hypothesis.
\end{proof}
\section{Flat modules are free}

Let $\F$ be a field of characteristic $p$, $G = C_p$, and form the constant Tambara functor $\underline{\F}$. When $p = 2$ and $\F = \F_2$, \cite{DHM24} prove that a finitely generated $\underline{\F_2}$-module is flat if and only if it is free. This is a consequence of a structure theorem in \cite{DHM24} for $\underline{\F_2}$-modules which is known to fail at odd primes. In this section we show that this flatness result holds in greater generality.

\begin{lemma}\label{lem:coind-of-res-is-box-product}
    Let $k$ be a $G$-Green functor. The endofunctors 
    \[ 
        \Coind_H^G \Res_H^G : k \text{-} \Mod \rightarrow k \text{-} \Mod 
    \] 
    and 
    \[ 
        \Coind_H^G \Res_H^G k \boxtimes_k - : k \text{-} \Mod \rightarrow k \text{-} \Mod 
    \] 
    are naturally isomorphic.
\end{lemma}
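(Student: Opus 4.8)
The plan is to exhibit a natural transformation between the two endofunctors and check it is an isomorphism levelwise. The key observation is that for a $G$-Green functor $k$, the $k$-module $\Coind_H^G \Res_H^G k$ is the "free-forget" adjunction comonad object, so one expects a projection-formula-style identity. First I would recall the (Green functor) projection formula / Frobenius reciprocity for the restriction--coinduction adjunction: for $H \subset G$, $\Res_H^G$ is both left and right adjoint to $\Coind_H^G$ (as the indexing $G$-set $G/H$ is finite), and there is a natural map
\[
    \Coind_H^G \Res_H^G k \boxtimes_k M \longrightarrow \Coind_H^G \Res_H^G M
\]
of $k$-modules, adjoint to the composite $\Res_H^G(\Coind_H^G \Res_H^G k \boxtimes_k M) \cong \Res_H^G \Coind_H^G \Res_H^G k \boxtimes_{\Res_H^G k} \Res_H^G M \to \Res_H^G k \boxtimes_{\Res_H^G k} \Res_H^G M \cong \Res_H^G M$, using that restriction is strong monoidal and the counit $\Res_H^G \Coind_H^G \to \mathrm{id}$. (Equivalently, one builds the map directly from the unit $M \to \Coind_H^G \Res_H^G M$ and $k$-linearity.) I would then verify this is the desired natural isomorphism.

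The main step is checking it is an isomorphism. Here I would use the explicit double-coset description of $\Res_H^G \Coind_H^G$ (a Mackey-style formula: $\Res_H^G \Coind_H^G N \cong \prod_{HgH} \Coind^H_{H \cap {}^g H} {}^g(\Res N)$), together with the fact that $\boxtimes_k$ is computed levelwise as $\boxtimes$ over the rings $k(G/K)$ after the appropriate coend/Day-convolution bookkeeping — more efficiently, I would reduce to the case $M = k[y_X]$ free, or even just to representable $k$-modules $k_X := k \boxtimes_{\underline{\Z}} \underline{\Z}\{X\}$ for $X$ a finite $G$-set, since both functors preserve colimits (coinduction along a finite-index inclusion is exact and preserves all colimits in the Green-module setting, and $-\boxtimes_k-$ preserves colimits) and every $k$-module is a colimit of such. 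On representables, both sides become readable in terms of the $G$-set $X$ and the combinatorics of $G/H$, and the comparison map is seen to be the identity. Alternatively, and perhaps most cleanly: use that $\Coind_H^G \Res_H^G = (\text{right adjoint})\circ(\text{left adjoint})$ and that $\Res_H^G k \boxtimes_k -$ base-changes along $k \to \Coind_H^G \Res_H^G k$ is not quite right — so I will instead lean on the identity $\Coind_H^G(A \boxtimes_{\Res k} \Res M) \cong \Coind_H^G A \boxtimes_k M$ (a projection formula for the monadic-style adjunction $(\Res, \Coind)$ between module categories), applied with $A = \Res_H^G k$, for which $A \boxtimes_{\Res k} \Res M = \Res M$.

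The hard part will be pinning down the projection formula $\Coind_H^G(A \boxtimes_{\Res_H^G k} \Res_H^G M) \cong \Coind_H^G A \boxtimes_k M$ carefully: one must know that $\Coind_H^G : (\Res_H^G k)\text{-}\Mod \to k\text{-}\Mod$ is lax (indeed oplax, via the counit) monoidal in the appropriate sense and that the comparison $2$-cell is invertible — this is where the finiteness of $G/H$ and the ambidexterity of $\Res/\Coind$ for Green functors genuinely enter. I expect this to follow from the standard Mackey-functor machinery (it is the module-level shadow of the fact that $\Coind_H^G \Res_H^G k$ is a "separable" commutative algebra object with $\Coind_H^G \Res_H^G k \boxtimes_k \Coind_H^G \Res_H^G k$ splitting appropriately), but writing it without circular appeal to the lemma itself is the delicate point. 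Once that formula is in hand, the lemma is the special case $A = \Res_H^G k$ and the naturality is automatic from the naturality of each ingredient.
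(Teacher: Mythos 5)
Your strategy is correct in outline but takes a genuinely different route from the paper. The paper never constructs a comparison map: it invokes Theorem F of \cite{Wis25a}, which says every module over the coinduced Green functor $\Coind_H^G \Res_H^G k$ is itself naturally coinduced, to write $\Coind_H^G \Res_H^G k \boxtimes_k M \cong \Coind_H^G N$ for some $\Res_H^G k$-module $N$ natural in $M$, and then identifies $N \cong \Res_H^G M$ by applying the strong symmetric monoidal functor $\Res_H^G$, using the double coset formula, and projecting onto the identity double coset factor. Your route --- build the map from the counit $\Res_H^G \Coind_H^G \to \mathrm{id}$ and strong monoidality of $\Res_H^G$, then verify it is an isomorphism --- avoids that structural input entirely at the cost of an explicit check. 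Of the closing strategies you float, commit to the colimit reduction and drop the projection-formula detour: as you yourself observe, the only non-circular proof of the projection formula $\Coind_H^G(A \boxtimes_{\Res_H^G k} \Res_H^G M) \cong \Coind_H^G A \boxtimes_k M$ is exactly the same reduction, so it buys nothing. The reduction does close: both endofunctors preserve colimits ($\Coind_H^G \cong \Ind_H^G$ on underlying Mackey functors, so $\Coind_H^G \Res_H^G$ is a composite of left adjoints, and $\Coind_H^G \Res_H^G k \boxtimes_k -$ is a left adjoint), every $k$-module is a colimit of free modules $k_{G/K} \cong \Coind_K^G \Res_K^G k$, and on these both sides compute, via the double coset formula, to the free module $k_{G/H \times G/K}$, with the comparison map matching the canonical identification. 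In terms of trade-offs: the paper's argument is shorter and leans on a structural fact (coinduced algebras have coinduced module categories) that it reuses repeatedly elsewhere, whereas yours is more self-contained and portable to settings where that theorem is not available.
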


\begin{proof}
    By \cite[Theorem F]{Wis25a} we can choose a $\Res_H^G k$-module $N$ naturally in $M$ so that we have a natural isomorphism 
    \[ 
        \Coind_H^G \Res_H^G k \boxtimes_k M \cong \Coind_H^G N 
    \] 
    of $\Coind_H^G \Res_H^G k$-modules (a fortiori of $k$-modules). It suffices to show $\Res_H^G M$ and $N$ are naturally isomorphic. 
    
    We apply the strong symmetric monoidal functor $\Res_H^G$, obtaining 
    \[ 
        \prod_{g \in H \backslash G / H} \Coind_{H \cap {}^g H}^H \Res_{H \cap {}^g H}^{{}^g H} {}^g (\Res_H^G k) \boxtimes_{\Res_H^G k} \Res_H^G M \cong \prod_{g \in H \backslash G / H} \Coind_{H \cap {}^g H}^H \Res_{H \cap {}^g H}^{{}^g H} {}^g N .
    \] 
    One checks straghtforwardly that this isomorphism restricts to an isomorphism 
    \[ 
       \Res_H^G k \boxtimes_{\Res_H^G k} \Res_H^G M \cong N 
    \] 
    of $\Res_H^G k$-modules on the identity double coset factor. This is the desired isomorphism.
\end{proof}

In fact, this admits a multiplicative refinment which will be useful later.

\begin{lemma}\label{lem:coind-adj-unit-is-base-changed}
    Let $k \rightarrow R$ a map of $G$-Green or $G$-Tambara functors. Then 
    \[ \begin{tikzcd} 
    k \arrow[r] \arrow[d] & R \arrow[d] \\
    \Coind_H^G \Res_H^G k \arrow[r] & \Coind_H^G \Res_H^G R 
    \end{tikzcd} \]
    is a pushout square of $k$-algebras.
\end{lemma}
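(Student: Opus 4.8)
The plan is to produce the canonical comparison $k$-algebra map out of the pushout and prove it is an isomorphism using \cref{lem:coind-of-res-is-box-product}. Since $\Res_H^G$ is strong symmetric monoidal, its right adjoint $\Coind_H^G$ is lax symmetric monoidal, so $\Coind_H^G \Res_H^G R$ is naturally a $k$-algebra. It receives a $k$-algebra map $\Coind_H^G \Res_H^G k \to \Coind_H^G \Res_H^G R$ by applying $\Coind_H^G \Res_H^G$ to $k \to R$, and a $k$-algebra map $\eta_R \colon R \to \Coind_H^G \Res_H^G R$, the unit of the $(\Res_H^G, \Coind_H^G)$ adjunction. By naturality of the unit these agree after precomposition with $k \to R$ and $k \to \Coind_H^G \Res_H^G k$, so they induce a $k$-algebra map
\[
\Psi \colon R \boxtimes_k \Coind_H^G \Res_H^G k \longrightarrow \Coind_H^G \Res_H^G R
\]
out of the pushout. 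The square in the statement is a pushout square exactly when $\Psi$ is an isomorphism, and since the forgetful functor $k\text{-}\Alg \to k\text{-}\Mod$ is conservative it is enough to check $\Psi$ is an isomorphism of $k$-modules.

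Next I would identify $\Psi$ explicitly. Being a map of $k$-algebras, $\Psi$ is $\Coind_H^G \Res_H^G k$-linear, and by the extension-of-scalars adjunction a $\Coind_H^G \Res_H^G k$-linear map out of $\Coind_H^G \Res_H^G k \boxtimes_k R$ is determined by its restriction along $R \to \Coind_H^G \Res_H^G k \boxtimes_k R$, which for $\Psi$ is $\eta_R$. Hence $\Psi$ coincides, on underlying modules, with the projection-formula natural transformation: the one obtained by applying $\Coind_H^G \Res_H^G k \boxtimes_k -$ to the unit $M \to \Coind_H^G \Res_H^G M$ and then using the $\Coind_H^G \Res_H^G k$-action on $\Coind_H^G \Res_H^G M$. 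It remains to show this natural transformation is an isomorphism, which is the content of \cref{lem:coind-of-res-is-box-product} once one checks the natural isomorphism produced there can be taken to be this map. Concretely, one reruns the proof of \cref{lem:coind-of-res-is-box-product}: use \cite[Theorem F]{Wis25a} to write $R \boxtimes_k \Coind_H^G \Res_H^G k \cong \Coind_H^G N$ for an $\Res_H^G k$-module $N$ natural in $R$, apply the strong symmetric monoidal functor $\Res_H^G$ together with the double coset formula for $\Res_H^G \Coind_H^G$, and read off from the identity double coset factor that $N \cong \Res_H^G R$ compatibly with $\Psi$. All of this respects ring structures because $\Res_H^G$ and $\Coind_H^G$ are symmetric monoidal and the isomorphism of \cite[Theorem F]{Wis25a} is natural.

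The main obstacle is precisely this last bookkeeping: \cref{lem:coind-of-res-is-box-product} and \cite[Theorem F]{Wis25a} yield abstract isomorphisms of modules, and one must verify they can be arranged to be the canonical comparison map $\Psi$, rather than merely that source and target are abstractly isomorphic. In practice this reduces to checking that, under the double coset decomposition of $\Res_H^G \Coind_H^G \Res_H^G(-)$, the identity double coset summand is the one through which the unit of the $(\Res_H^G, \Coind_H^G)$ adjunction factors, and that the resulting identification $N \cong \Res_H^G R$ is a map of $\Res_H^G k$-algebras. Once this compatibility is in place, $\Psi$ is an isomorphism of $k$-modules, hence of $k$-algebras, and the square is a pushout.
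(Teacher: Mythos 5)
Your argument is correct and follows essentially the same route as the paper: reduce to checking after applying $\Res_H^G$ via \cite[Theorem F]{Wis25a}, then use the double coset formula for the restriction of a coinduction together with strong symmetric monoidality of $\Res_H^G$. The bookkeeping you flag --- verifying that the abstract isomorphism underlying \cref{lem:coind-of-res-is-box-product} can be taken to be the canonical comparison map $\Psi$ built from the adjunction unit and the lax monoidal structure on $\Coind_H^G$ --- is precisely what the paper delegates to \cite[Proposition 5.8]{Wis25a}, so your more explicit treatment is a refinement of, not a departure from, the paper's proof.
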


\begin{proof}
    We must construct a natural isomorphism 
    \[ 
        \Coind_H^G \Res_H^G k \boxtimes_k R \cong \Coind_H^G \Res_H^G R 
    \] 
    of $k$-algebras. By \cite[Theorem F and Proposition 5.8]{Wis25a} it suffices to show that we obtain naturally isomorphic $H$-Tambara functors upon applying $\Res_H^G$. Now the claim follows from strong symmetric monoidality of $\Res_H^G$, the double coset formula for the restriction of a coinduction, and the fact that $- \boxtimes_k R$ commutes with finite products.
\end{proof}

It is not true that projective modules are flat in an arbitrary abelian category equipped with bilinear symmetric monoidal structure. Fortunately, this result is true in the world of equviariant algebra.

\begin{lemma}\label{lem:free-k-modules-are-flat}
    Let $k$ be a $G$-Green functor. All finitely generated projective $k$-modules are flat.
\end{lemma}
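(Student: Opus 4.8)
The plan is to reduce to the case of a single representable (free rank-one) module and then compute directly. First I would observe that flatness is closed under finite direct sums and retracts: if $M \oplus M'$ is flat then so is $M$, since $M \boxtimes_k - $ is a retract of $(M \oplus M') \boxtimes_k -$, hence exact. Because every finitely generated projective $k$-module is a retract of a finite direct sum of representable $k$-modules $k_X := k[G/H]$ (the free $k$-module on a finite $G$-set $X$, or on each orbit $G/H$ of $X$), it therefore suffices to prove that each $k_{G/H}$ is a flat $k$-module.

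Next I would identify this free module concretely. The free $k$-module on the $G$-set $G/H$ represents the functor $M \mapsto M(G/H)$ on $k$-modules, i.e. $\Hom_k(k_{G/H}, M) \cong M(G/H)$, and the corresponding left adjoint sends $M$ to $k_{G/H} \boxtimes_k M$. I claim $k_{G/H} \boxtimes_k - $ is naturally isomorphic to the endofunctor $\Coind_H^G \Res_H^G$. Indeed $\Coind_H^G \Res_H^G$ is right adjoint to... more usefully, by the Wirthm\"uller-type identification in equivariant algebra the induction/coinduction $\Coind_H^G \Res_H^G M$ again represents $M \mapsto M(G/H)$ levelwise at $G/G$ in the appropriate sense, and \cref{lem:coind-of-res-is-box-product} already gives exactly the natural isomorphism $\Coind_H^G \Res_H^G k \boxtimes_k M \cong \Coind_H^G \Res_H^G M$ — so $k_{G/H} \cong \Coind_H^G \Res_H^G k$ as $k$-modules and the functor $k_{G/H} \boxtimes_k -$ is $\Coind_H^G \Res_H^G$. (If the identification $k_{G/H} \cong \Coind_H^G \Res_H^G k$ is not literally in the excerpt, I would prove it by comparing the representable functors they corepresent, using that $\Coind_H^G$ is right adjoint to $\Res_H^G$ and $\Res_H^G$ to $\Ind_H^G$, together with the self-duality $\Ind_H^G \simeq \Coind_H^G$ for finite index.)

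It then remains to show $\Coind_H^G \Res_H^G : k\text{-}\Mod \to k\text{-}\Mod$ is exact. Restriction $\Res_H^G$ is exact (it is computed levelwise, and limits/colimits of Mackey functors are computed levelwise), so the content is that $\Coind_H^G : \Res_H^G k\text{-}\Mod \to k\text{-}\Mod$ is exact. But $\Coind_H^G$ of a Mackey functor is also computed levelwise by a formula involving a finite product over cosets $\Coind_H^G N(G/K) \cong \prod_{HgK} N(\text{something})$ — finite products are exact in modules over a ring — and the structure maps are reorganizations of the structure maps of $N$; thus $\Coind_H^G$ preserves kernels and cokernels, i.e. is exact. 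Composing two exact functors, $\Coind_H^G \Res_H^G$ is exact, so $k_{G/H} \boxtimes_k -$ is exact and $k_{G/H}$ is flat; by the reduction above every finitely generated projective $k$-module is flat.

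The main obstacle I anticipate is the clean identification of the free $k$-module $k_{G/H}$ with $\Coind_H^G \Res_H^G k$ and the verification that the box-product functor it induces is genuinely $\Coind_H^G \Res_H^G$ and not merely abstractly isomorphic to it in a way that does not transport exactness — but \cref{lem:coind-of-res-is-box-product} is precisely designed to supply this, so in practice the proof should be short: cite \cref{lem:coind-of-res-is-box-product}, note exactness of $\Res_H^G$ and $\Coind_H^G$ at the level of underlying abelian categories, and conclude.
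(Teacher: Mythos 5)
Your proposal is correct and follows essentially the same route as the paper: reduce to summands of sums of the free modules $\Coind_H^G\Res_H^G k$, invoke \cref{lem:coind-of-res-is-box-product} to identify $\Coind_H^G\Res_H^G k \boxtimes_k -$ with $\Coind_H^G\Res_H^G(-)$, and conclude from exactness of $\Res_H^G$ and $\Coind_H^G$ (the paper justifies the latter by noting they are each other's left and right adjoints, while you argue via the levelwise double-coset formula; both work). The extra care you take in identifying $k_{G/H}$ with $\Coind_H^G\Res_H^G k$ is sound and is exactly what the paper leaves implicit.
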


\begin{proof}
    Since sums and summands of flat modules are easily seen to be flat, it suffices to show that each $k \rightarrow \Coind_H^G \Res_H^G k$ is flat. Now we have natural isomorphisms 
    \[ 
        \Coind_H^G \Res_H^G k \boxtimes_k - \cong \Coind_H^G \Res_H^G (-) 
    \] 
    of endofunctors of $k$-modules. $\Res_H^G$ and $\Coind_H^G$ are exact (as they are each other's left and right adjoints) so the claim follows.
\end{proof}

Guided by the heuristic that flat modules are ``torsion-free" and viewing the kernel of restriction as torsion elements, we obtain the following.

\begin{lemma}\label{lem:flat-over-MRC-implies-MRC}
    Let $k$ be a $G$-Green functor such that all restrictions in $k$ are injective. If $M$ is a flat $k$-module, then all restrictions in $M$ are injective.
\end{lemma}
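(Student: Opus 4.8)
The plan is to realize $M$ as a sub-Mackey functor of $\Coind_e^G \Res_e^G M$, whose restriction maps are automatically injective, and then to chase a diagram. It suffices to prove that each $\Res_e^K \colon M(G/K) \to M(G/e)$ is injective: for $H \subseteq K$, the factorization $\Res_e^K = \Res_e^H \circ \Res_H^K$ then forces $\Res_H^K$ to be injective as well.

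First I would record two facts about the adjunction $\Res_e^G \dashv \Coind_e^G$. (i) For any $k$-module $N$, the restriction maps of $\Coind_e^G \Res_e^G N$ are injective: one has $(\Coind_e^G \Res_e^G N)(G/K) \cong \prod_{G/K} N(G/e)$, and the restriction to level $G/H$ is the split injection dual to the surjection of finite $G$-sets $G/H \twoheadrightarrow G/K$. (ii) The unit $\eta_N \colon N \to \Coind_e^G \Res_e^G N$ is injective in level $G/e$, since $\Res_e^G(\eta_N)$ is a split monomorphism by the triangle identity. Using (ii) for $N = k$ together with the hypothesis that restrictions in $k$ are injective, one deduces that $\eta_k$ is levelwise injective: if $\eta_k(y) = 0$ in level $G/K$, then by naturality $\eta_k(\Res_e^K y) = 0$ in level $G/e$, so $\Res_e^K y = 0$ by (ii), so $y = 0$.

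Next I would apply the exact functor $M \boxtimes_k -$ to the monomorphism $\eta_k \colon k \to \Coind_e^G \Res_e^G k$. Since $M$ is flat this remains a monomorphism, and \cref{lem:coind-of-res-is-box-product} identifies its target with $\Coind_e^G \Res_e^G M$, so we obtain a levelwise injective morphism of $k$-modules $\iota \colon M \to \Coind_e^G \Res_e^G M$ (note that we do not need $\iota$ to be the unit, only that it is a levelwise injective map of Mackey functors). Now if $x \in M(G/K)$ satisfies $\Res_e^K x = 0$, then by naturality $\Res_e^K(\iota_{G/K}(x)) = \iota_{G/e}(\Res_e^K x) = 0$ in $\Coind_e^G \Res_e^G M$, so $\iota_{G/K}(x) = 0$ by (i), and hence $x = 0$. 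Thus $\Res_e^K$ is injective for every $K$, and the reduction from the first paragraph completes the proof.

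The step I expect to require the most care is (i): one must pin down the levelwise description of coinduction from the trivial subgroup and see that its restrictions are pullbacks along surjections of finite sets, hence injective. This is exactly what singles out $H = e$ — for a general subgroup $H$ the restrictions of $\Coind_H^G \Res_H^G M$ involve the restrictions of $M$ itself, which we have no right to assume are injective, so the argument would become circular. Everything else is formal: flatness transports the single monomorphism $\eta_k$ across to $\iota$ via \cref{lem:coind-of-res-is-box-product}, and the conclusion is pure diagram chasing.
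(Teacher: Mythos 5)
Your proof is correct. It shares the paper's basic strategy---use flatness together with \cref{lem:coind-of-res-is-box-product} to transport an injectivity statement about $k$ into one about $M$---but it implements that strategy with a genuinely different key map. The paper takes, for each pair $L \subset H$, the comparison map $\Coind_H^G \Res_H^G k \to \Coind_L^G \Res_L^G k$ (injective by the hypothesis on $k$), applies $M \boxtimes_k -$, and reads off $\Res_L^H$ in $M$ directly as the $G/G$-level of the resulting monomorphism $\Coind_H^G \Res_H^G M \to \Coind_L^G \Res_L^G M$. You instead transport the single unit map $\eta_k \colon k \to \Coind_e^G \Res_e^G k$, whose levelwise injectivity you correctly extract from the triangle identity together with the hypothesis on $k$, thereby embedding $M$ into $\Coind_e^G \Res_e^G M$; you then use that coinduction from the trivial subgroup has visibly injective restrictions (pullback along surjections of finite sets), plus the factorization $\Res_e^K = \Res_e^H \circ \Res_H^K$. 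Your route buys a more elementary verification of the input injectivity---one map rather than a family, and your fact (i) is transparent precisely because $H = e$, as you observe---at the cost of an extra (trivial) reduction step; the paper's route is more uniform in that each restriction of $M$ appears directly as a level of a monomorphism. Both arguments ultimately rest on the same two pillars: \cref{lem:coind-of-res-is-box-product} and the fact that monomorphisms of $k$-modules are detected levelwise.
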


\begin{proof}
    Our assumption on $k$ implies that whenever $L \subset H$, the canonical $k$-algebra map 
    \[ 
        \Coind_H^G \Res_H^G k \rightarrow \Coind_L^G \Res_L^G k 
    \] 
    is injective. By \cref{lem:coind-of-res-is-box-product} and flatness of $M$ we deduce that 
    \[ 
        \Coind_H^G \Res_H^G M \rightarrow \Coind_L^G \Res_L^G M 
    \] 
    is injective. The composition 
    \[ 
        M(G/H) \cong (\Coind_H^G \Res_H^G M)(G/G) \rightarrow (\Coind_L^G \Res_L^G M)(G/G) \cong M(G/L) 
    \] 
    is the restriction $\Res_L^H$ in $M$. Since monics in $k \text{-} \Mod$ are detected levelwise, the claim follows.
\end{proof}

\begin{remark}\label{rem:bottom-level-of-modules-for-constant-F_p}
    Let $G = C_p$ and $\F$ a field of characteristic $p$. If $M$ is any $\underline{\F}$-module, then $M(C_p/e)$ is a $\F[C_p]$-module. Our characteristic assumption implies $\F[C_p] \cong \F[\sigma]/(\sigma-1)^p$, which is a principal ideal domain. If $M$ is finitely generated, then $M(C_p/e)$ is a finitely generated $\F[C_p]$-module, and consequently the structure theorem for finitely generated modules over principal ideal domains applies. In particular, it supplies an isomorphism
    \[ 
        M(C_p/e) \cong \oplus_i \F[\sigma]/(\sigma-1)^{a_i} 
    \] 
    where $i$ ranges through a finite indexing set and $1 \leq a_i \leq p$.
\end{remark}

We required \cref{thm:flat-implies-free-for-constant-F_p} to prove \cref{cor:finite-etale-group-schemes-modular-characteristic} in an earlier version of the article, although it is now no longer necessary. However, besides being of intrinsic interest, it may be useful in the study of finite \'{e}tale $\underline{\F}$-algebras when one drops the assumption that $\F$ is algebraically closed.

\begin{theorem}\label{thm:flat-implies-free-for-constant-F_p}
    Let $G = C_p$ and $\F$ a field of characteristic $p$. A finitely generated $\underline{\F}$-module is flat if and only if it is free.
\end{theorem}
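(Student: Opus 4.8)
The plan is to show that a finitely generated flat $\underline{\F}$-module $M$ is free, the other implications being \cref{lem:free-k-modules-are-flat} together with the fact that projective modules over any ring-like object are built from the free ones. Over $\underline{\F}$ the indecomposable finitely generated projectives should be exactly the modules $\Coind_H^G \Res_H^G \underline{\F}$ for $H \in \{e, C_p\}$, so ``free'' here means a finite direct sum of copies of $\underline{\F}$ and of $\Coind_e^{C_p} \F \cong \Coind_e^{C_p} \Res_e^{C_p} \underline{\F}$; I would phrase the target accordingly. The strategy is to read off enough numerical invariants of $M$ from its two levels $M(C_p/C_p)$ and $M(C_p/e)$, use flatness to constrain these invariants, and then exhibit $M$ as the predicted direct sum.

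First I would record what flatness buys us. By \cref{lem:flat-over-MRC-implies-MRC} (applied with $k = \underline{\F}$, whose restriction $\F \to \F$ is injective), the restriction $\Res_e^{C_p} : M(C_p/C_p) \to M(C_p/e)$ is injective. Next, by \cref{rem:bottom-level-of-modules-for-constant-F_p}, $M(C_p/e) \cong \bigoplus_i \F[\sigma]/(\sigma-1)^{a_i}$ with $1 \le a_i \le p$, and the $C_p$-action (i.e. the Weyl action) is by $\sigma$. I would then argue that flatness forces every $a_i$ to be either $1$ or $p$: the summands with $a_i = p$ are free $\F[C_p]$-modules and will correspond to $\Coind_e^{C_p} \F$ summands of $M$, while the summands with $a_i = 1$ are trivial $C_p$-modules and will correspond to $\underline{\F}$ summands. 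The key input for ruling out $1 < a_i < p$ is again a torsion-freeness heuristic: base-changing along a suitable map and using exactness of $M \boxtimes_{\underline{\F}} -$ should detect a nonzero Tor obstruction if some $a_i$ lies strictly between $1$ and $p$. Concretely, I expect to test flatness against a short exact sequence of $\underline{\F}$-modules whose bottom level realizes the multiplication-by-$(\sigma-1)$ map on $\F[C_p]$, so that a middle value of $a_i$ produces a nonzero kernel after applying $M \boxtimes_{\underline{\F}} -$.

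Once the bottom level is pinned down as $\F^{\oplus s} \oplus \F[C_p]^{\oplus t}$ (trivial action on the first summand), I would reconstruct $M$ itself. The free module $\underline{\F}^{\oplus s} \oplus (\Coind_e^{C_p}\F)^{\oplus t}$ has the same bottom level with the same Weyl action, so there is a map from it to $M$ (or from $M$ to it) inducing an isomorphism on bottom levels; since restriction in $M$ is injective and the top level of a $\underline{\F}$-module embeds (via the unit of the $\Coind$–$\Res$ adjunction, as in the proof of \cref{lem:flat-over-MRC-implies-MRC}) into a subspace of the bottom level cut out by Weyl-invariance and transfer conditions, I would check that this forces the top levels to agree as well, hence the map is an isomorphism. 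Here I would use \cite[Propositions 3.6 and 3.8]{Wis25a}-type control of $k$ by $k(G/e)$, alluded to in the introduction, to upgrade the bottom-level isomorphism to a genuine isomorphism of $\underline{\F}$-modules.

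The main obstacle is the middle step: ruling out the values $1 < a_i < p$. For $p = 2$ there is nothing to rule out, which is why the $\F_2$ case in \cite{DHM24} is cleaner; for odd $p$ one genuinely needs a flatness obstruction, and producing the right test module and identifying the relevant $\mathrm{Tor}$ group at the level of Green functors (rather than just levelwise) is the delicate part. A secondary subtlety is making sure that ``free'' is interpreted correctly — the correct notion of free $\underline{\F}$-module is a sum of $\Coind_H^G \Res_H^G \underline{\F}$'s, not literally a sum of copies of $\underline{\F}$ — and that the reconstruction in the last paragraph does not secretly require knowing the answer in advance.
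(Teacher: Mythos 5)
There is a genuine gap, and it sits exactly where you locate it yourself: the step ruling out Jordan blocks of intermediate size is never carried out, only hoped for. Saying that ``base-changing along a suitable map should detect a nonzero Tor obstruction'' is a restatement of the goal, not an argument, and this is precisely where the content of the theorem lives. The paper's actual mechanism is worth comparing against: it first splits off every summand with $a_i = p$ as a direct summand of $M$ isomorphic to $\Coind_e^{C_p}\F$, using that this module is \emph{injective} over $\underline{\F}$ (a step absent from your plan, and needed because the subsequent argument fails in the presence of full Jordan blocks). On the complement the transfer is multiplication by $(\sigma-1)^{p-1}$, hence zero, and the test object is the module $D$ with $D(C_p/C_p)=0$ and $D(C_p/e)=\F$, injecting into $\underline{\F}$. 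Flatness gives injectivity of $D \boxtimes M \to M$, whose top level is the map $M(C_p/e)_{C_p}/\Res(M(C_p/C_p)) \to M(C_p/C_p)$ given by the (zero) transfer; injectivity forces the coinvariants to be generated by the image of restriction. Since that image lies in the fixed points, and for a Jordan block of size $a \geq 2$ the fixed points die in the coinvariants, every remaining $a_i$ equals $1$. Note this rules out \emph{all} $a_i \geq 2$ at once, not merely the intermediate ones, which is why the $a_i = p$ blocks must be removed first.

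Your concluding reconstruction step is also not justified as stated: a finitely generated $\underline{\F}$-module is not determined by its bottom level with its Weyl action (one still has to pin down the top level and the restriction and transfer maps), and the results of \cite{Wis25a} you invoke concern idempotent decompositions of Tambara functors and algebras, not modules. The paper avoids this by deducing directly that the restriction in the remaining summand is both injective (\cref{lem:flat-over-MRC-implies-MRC}) and surjective (from the $D$-argument above), hence an isomorphism onto a trivial $C_p$-representation, which exhibits that summand as a sum of copies of $\underline{\F}$ with no appeal to a classification of modules by their bottom level. Your identification of the relevant invariants and of \cref{lem:flat-over-MRC-implies-MRC} and \cref{rem:bottom-level-of-modules-for-constant-F_p} as inputs is correct, but as written the proposal proves the theorem only in the case $p=2$, where there is nothing to rule out.
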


\begin{proof}
    Free modules are flat by \cref{lem:free-k-modules-are-flat}. Conversely, let $M$ be a flat (finitely generated) $\underline{\F}$-module. Let $x \in M(C_p/e)$ be an element whose $C_p$-orbits are linearly independent; such an element exists if and only if in the direct sum decomposition of \cref{rem:bottom-level-of-modules-for-constant-F_p} there is a summand with $a_i = p$. Then $x$ generates a submodule of $M$ isomorphic to $\Coind_e^{C_p} \F$. This is an injective $\underline{\F}$-module (cf. \cite[Lemma 4.8]{CW25}) hence splits off as a summand. Since $\Coind_e^{C_p} \F$ is flat by the previous paragraph, $M$ is flat if and only if the complimentary summand is flat. Splitting off more such summands, we reduce to the case that no terms with $a_i = p$ appear in the direct sum decomposition of \cref{rem:bottom-level-of-modules-for-constant-F_p}.

    Next, recall from \cref{lem:flat-over-MRC-implies-MRC} that the restriction in $M$ is injective. The transfer is therefore determined by the sum over $C_p$-orbits. In $M(C_p/e)$, this corresponds to multiplication by 
    \[ 
        1+\sigma+\dots+\sigma^{p-1} = \frac{\sigma^p-1}{\sigma-1} = \frac{(\sigma-1)^p}{\sigma-1} = (\sigma-1)^{p-1} 
    \] 
    which is zero by the fact that we have reduced to the case 
    \[ 
        M(C_p/e) \cong \oplus_i \F[\sigma]/(\sigma-1)^{a_i} 
    \] 
    with $1 \leq a_i \leq p-1$. 

    Finally, let $D$ be the $\underline{\F}$-module specified by $D(C_p/C_p) = 0$ and $D(C_p/e) = \F$. There is an injection $D \rightarrow \underline{\F}$, and, since $M$ is flat, it follows that $D \boxtimes M \rightarrow M$ is injective. We compute 
    \[ 
        (D \boxtimes M)(C_p/C_p) \cong M(C_p/e)_{C_p}/\Res_e^{C_p}(M(C_p/C_p)) 
    \] 
    using the description of the box product in \cite{Maz13}, and the map to $M(C_p/C_p)$ is given by sending the class $[x]$ of $x \in M(C_p/e)$ to its transfer, which is zero. Since $D \boxtimes M \rightarrow M$ is injective, we deduce $M(C_p/e)_{C_p} / \Res_e^{C_p}(M(C_p/C_p)) \cong 0$. 

    Unwinding definitions, if follows that the $C_p$-orbits of the image of the restriction in $M$ generate $M(C_p/e)$. Since the restriction lands in the fixed points, the $C_p$-orbits of the image of the restriction are equal to the image of restriction. Thus the restriction in $M$ is surjective, hence an isomorphism. We have thus shown that $M$ is isomorphic to a sum of copies of $\underline{\F}$, which is free, as desired.
\end{proof}

\begin{corollary}\label{cor:flat-proj-and-free-are-equiv-for-constant-F_p}
    Let $G = C_p$ and $\F$ a field of characteristic $p$. The following three conditions on a finitely generated $\underline{\F}$-module are equivalent:
    \begin{enumerate}
        \item flat
        \item projective
        \item free.
    \end{enumerate}
\end{corollary}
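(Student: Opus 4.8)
The plan is to verify the cycle of implications
\[
\text{free} \Longrightarrow \text{projective} \Longrightarrow \text{flat} \Longrightarrow \text{free},
\]
of which only the first is not already established: flat $\Rightarrow$ free is precisely \cref{thm:flat-implies-free-for-constant-F_p}, and projective $\Rightarrow$ flat (for finitely generated modules) is \cref{lem:free-k-modules-are-flat}. So the corollary is essentially just the bookkeeping that closes the loop among results already proved.

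Thus the only thing left to check is that a free $\underline{\F}$-module --- meaning, as in the proof of \cref{thm:flat-implies-free-for-constant-F_p}, a direct sum of copies of $\underline{\F}$ --- is projective. This is formal: $\underline{\F}$ is projective as a module over itself since $\Hom_{\underline{\F}}(\underline{\F},-)$ is (naturally isomorphic to) the identity functor on $\underline{\F}\text{-}\Mod$, hence exact; and a direct sum of projectives is projective because $\Hom_{\underline{\F}}(\bigoplus_i \underline{\F}, -) \cong \prod_i \Hom_{\underline{\F}}(\underline{\F}, -)$, while products of exact functors valued in $\underline{\F}$-modules are exact, exactness being detected levelwise and products of short exact sequences of abelian groups being short exact. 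Finally, since $M$ is assumed finitely generated, the free module to which \cref{thm:flat-implies-free-for-constant-F_p} shows it is isomorphic is automatically of finite rank, so everything stays within the finitely generated world.

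I do not anticipate any real obstacle: all the substantive content lies in the preceding theorem and lemma. The only point deserving a moment's care is to confirm that all three conditions in the statement are being taken for finitely generated modules, so that \cref{lem:free-k-modules-are-flat}, which is phrased for finitely generated projectives, applies verbatim; this is exactly the hypothesis in the statement of the corollary, so no extra argument is needed.
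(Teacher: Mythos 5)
Your cycle free $\Rightarrow$ projective $\Rightarrow$ flat $\Rightarrow$ free is logically sound and takes a genuinely different route from the paper: the paper disposes of projective $\Leftrightarrow$ free by citing an external result (Theorem 4.7 of [CW25], applicable because $\underline{\F}$ is a relatively finite dimensional Green meadow), whereas you close the loop using only the formal implication free $\Rightarrow$ projective together with \cref{lem:free-k-modules-are-flat} and \cref{thm:flat-implies-free-for-constant-F_p}, which keeps the corollary self-contained. Two corrections are needed, though. First, ``free'' here does not mean a direct sum of copies of $\underline{\F}$ alone: as the proof of \cref{lem:free-k-modules-are-flat} makes clear, a free module is a direct sum of representables $\Coind_H^{C_p}\Res_H^{C_p}\underline{\F}$ over various subgroups $H$, and indeed \cref{thm:flat-implies-free-for-constant-F_p} produces summands isomorphic to $\Coind_e^{C_p}\F$ as well as to $\underline{\F}$, so your projectivity argument must cover both kinds of summand. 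Second, $\Hom_{\underline{\F}}(\underline{\F},-)$ is not the identity functor on $\underline{\F}$-modules; a module map out of the unit is determined by the image of $1$, so this functor is evaluation at $C_p/C_p$. The repair is uniform and easy: $\Hom_{\underline{\F}}(\Coind_H^{C_p}\Res_H^{C_p}\underline{\F},M)\cong M(C_p/H)$, and evaluation at any level is exact because exactness of sequences of Mackey functor modules is checked levelwise; hence every representable, and therefore every (finite) direct sum of representables, is projective. With that adjustment your argument goes through.
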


\begin{proof}
    By \cref{thm:flat-implies-free-for-constant-F_p} flat and free are equivalent, and by \cite[Theorem 4.7]{CW25} projective and free are equivalent (since $\underline{\F}$ is a relatively finite dimensional Green meadow).
\end{proof}

\begin{remark}
    Morally, the most pathological behavior of field-like Tambara functors tends to be captured by $\underline{\F_p}$ when $G = C_p$, as these fail to be field-like Green functors. We therefore expect \cref{cor:flat-proj-and-free-are-equiv-for-constant-F_p} to be true for all field-like $G$-Tambara functors regardless of $G$.
\end{remark}

\section{Fundamental properties of \'{e}tale morphisms}

In this section we establish that compositions, products, and base-changes of \'{e}tale morphisms are \'{e}tale. We begin with compositions.

\begin{proposition}\label{prop:etale-preserved-under-comp}
    Let $f : k \rightarrow R$ and $g : R \rightarrow S$ be Tambara functor morphisms.
    \begin{enumerate}
        \item If $S$ is finitely presented over $R$ and $R$ is finitely presented over $k$, then $S$ is finitely presented over $k$.
        \item If $S$ is flat over $R$ and $R$ is flat over $k$, then $S$ is flat over $k$.
        \item If $\Omega^{1,\Tamb}_{S/R} = 0$ and $\Omega^{1,\Tamb}_{R/k} = 0$, then $\Omega^{1,\Tamb}_{S/k} = 0$.
    \end{enumerate}
    Consequently the class of \'{e}tale morphisms of Tambara functors is closed under composition.
\end{proposition}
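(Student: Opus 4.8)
The plan is to prove the three numbered statements separately, after which the last assertion is immediate: a Tambara functor morphism is étale precisely when it is finitely presented, flat, and has vanishing genuine Kähler differentials. For (1) I would invoke the two recorded facts that free polynomial $k$-algebras on finitely many generators are finitely presented and that finite colimits of finitely presented $k$-algebras are finitely presented. Write $S$ as a coequalizer $\mathrm{Coeq}(R[x_i]_{i\in I} \rightrightarrows R[x_j]_{j\in J})$ of free polynomial $R$-algebras on finitely many generators. Because base change along $k \to R$ is left adjoint to restriction of scalars, each such free polynomial $R$-algebra is of the form $R \boxtimes_k A$ with $A$ the corresponding free polynomial $k$-algebra on finitely many generators; that is, the pushout in $k\text{-}\Alg$ of the finitely presented $k$-algebras $R$ and $A$, hence finitely presented over $k$. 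The forgetful functor $R\text{-}\Alg \to k\text{-}\Alg$ (a coslice projection) creates connected colimits, so the displayed coequalizer may be formed in $k\text{-}\Alg$; thus $S$ is a finite colimit of finitely presented $k$-algebras, proving (1).

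For (2), associativity of the relative box product together with $S \boxtimes_R R \cong S$ yields natural isomorphisms $S \boxtimes_k M \cong S \boxtimes_R (R \boxtimes_k M)$ for $k$-modules $M$, exhibiting $S \boxtimes_k -$ as the composite of $R \boxtimes_k - \colon k\text{-}\Mod \to R\text{-}\Mod$ and $S \boxtimes_R - \colon R\text{-}\Mod \to S\text{-}\Mod$. Both factors are exact by hypothesis, so the composite is exact and $S$ is flat over $k$.

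For (3) I would argue through the universal property of genuine derivations. By Hill's theorem $\Der_k(S,-) \cong \Hom_S(\Omega^{1,\Tamb}_{S/k},-)$, it suffices to show $\Der_k(S,M) = 0$ for every $S$-module $M$ (applying this with $M = \Omega^{1,\Tamb}_{S/k}$ then forces the identity map to vanish). Let $d \colon S \to M$ be a genuine $k$-derivation and $g \colon R \to S$ the structure map. Naturality of $\Nm$, $\Res$, $\Tr$ and of the module action with respect to the Tambara map $g$ shows the composite $d \circ g \colon R \to M$ is a genuine $k$-derivation into $M$, regarded as an $R$-module via $g$; since $\Omega^{1,\Tamb}_{R/k} = 0$ this forces $d \circ g = 0$, i.e. $d$ annihilates the image of $R$ in $S$. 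The Leibniz rule of \cref{def:genuine-k-derivation} then makes $d$ an $R$-module map and, combined with the vanishing just obtained, a genuine $R$-derivation $S \to M$; as $\Omega^{1,\Tamb}_{S/R} = 0$ we conclude $d = 0$. This proves (3).

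Feeding (1)--(3) the hypothesis that $k \to R$ and $R \to S$ are both étale gives that $k \to S$ is finitely presented, flat, and has $\Omega^{1,\Tamb}_{S/k} = 0$, hence étale. I expect (3) to be the main obstacle: unlike in classical commutative algebra there is no Jacobi--Zariski exact sequence available a priori for genuine Kähler differentials, so transitivity must be extracted from representability of genuine derivations, and the point requiring genuine care is checking that the norm axiom in \cref{def:genuine-k-derivation} survives restricting a derivation out of $S$ along $g$ and re-reading it as an $R$-derivation. Parts (1) and (2) are essentially formal; the only subtlety in (1) is transporting the coequalizer between $R\text{-}\Alg$ and $k\text{-}\Alg$.
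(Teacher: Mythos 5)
Your proposal is correct and follows essentially the same route as the paper: part (1) by base-changing the free polynomial presentation, part (2) by the associativity isomorphism $S \boxtimes_R R \boxtimes_k - \cong S \boxtimes_k -$, and part (3) by precomposing a genuine $k$-derivation of $S$ with $g$ to kill it on $R$ and then reading it as a genuine $R$-derivation. Your writeup is in fact slightly more careful than the paper's about transporting the coequalizer from $R$-algebras to $k$-algebras and about why the restricted derivation satisfies the norm axiom.
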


\begin{proof}
    If $S$ is finitely presented over $R$ and $R$ is finitely presented over $k$, then $S$ is a finite colimit of free polynomial $R$-algebras $SR[x_{H_i}]$. Since base-change along $k \rightarrow k[x_{H_i}]$ preserves colimits and takes free algebras to free algebras, we deduce that each $R[x_{H_i}]$ is a finite colimit of free polynomial $k$-algebras. Thus $S$ is a finite colimit of free polynomial $k$-algebras.

    Second, we have a natural isomorphism 
    \[ 
        S \boxtimes_R R \boxtimes_k - \cong S \boxtimes_k - 
    \] 
    of functors, so that flatness of $S$ over $R$ and of $R$ over $k$ imply flatness of $S$ over $k$.
    
    Finally, we establish the claim about genuine K\"{a}hler differentials. Assume every genuine $k$-derivation of $R$ is zero and that every genuine $R$-derivation of $S$ is zero. Let $d : S \rightarrow M$ be a genuine $k$-derivation. Then $d \circ g$ is a genuine $k$ derivation of $R$, hence is zero. Thus $d$ is a genuine $R$-derivation of $S$, hence is zero.
\end{proof}

Now we may move on to studying products and \'{e}taleness.

\begin{lemma}\label{lem:genuine-differentials-of-product}
	Let $R_1$ and $R_2$ be $k$-algebras. Then we have 
    \[ 
        \Omega^{1, \Tamb}_{R_1 \times R_2/k} \cong \Omega^{1, \Tamb}_{R_1/R} \oplus \Omega^{1, \Tamb}_{R_2/R} 
    \] 
    as $R_1 \times R_2$-modules (where we view an $R_i$-module as an $R_1 \times R_2$-module by restriction along the projection).
\end{lemma}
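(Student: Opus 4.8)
The plan is to compute $I/I^{>1}$ for $R := R_1\times R_2$ directly from the product decomposition of $R\boxtimes_k R$. Write $e_1 = (1,0)$, $e_2 = (0,1)$ for the orthogonal idempotents of $R$. Since $-\boxtimes_k-$ commutes with finite products (as used in the proof of \cref{lem:coind-adj-unit-is-base-changed}), the four orthogonal idempotents $e_i\boxtimes e_j$ sum to $1$ and exhibit
\[ R\boxtimes_k R \cong (R_1\boxtimes_k R_1)\times(R_1\boxtimes_k R_2)\times(R_2\boxtimes_k R_1)\times(R_2\boxtimes_k R_2), \]
where each $R_i$ is a $k$-algebra via $k\to R\to R_i$. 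The multiplication $\mu : R\boxtimes_k R\to R$ is a ring map with $\mu(e_i\boxtimes e_j) = e_ie_j$, so $\mu$ annihilates the two off-diagonal factors and restricts on the $(i,i)$ factor to the multiplication $\mu_i : R_i\boxtimes_k R_i\to R_i$. Consequently the augmentation ideal decomposes as
\[ I = \ker\mu \cong I_1\times(R_1\boxtimes_k R_2)\times(R_2\boxtimes_k R_1)\times I_2, \qquad I_i := \ker\mu_i. \]

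Next I would show that $I^{>1}$ decomposes compatibly with this splitting. Because all structure maps of a finite product of Tambara functors (in particular products, restrictions, transfers, and norms) are computed coordinatewise, and because the four factors are cut out by orthogonal idempotents, the ideal $I^{>1}$ — generated levelwise by $I^2$ together with images of norms — is the direct sum of the corresponding ideals formed in the individual factors; for the diagonal factors this is $I_i^{>1}$. For an off-diagonal factor, however, the relevant ideal is the whole factor $R_i\boxtimes_k R_j$ ($i\neq j$), which is unital with unit the idempotent $e_i\boxtimes e_j$; since $e_i\boxtimes e_j = (e_i\boxtimes e_j)^2 \in I^2 \subseteq I^{>1}$, the entire off-diagonal factor lies in $I^{>1}$ and contributes nothing to the quotient. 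Combining,
\[ \Omega^{1,\Tamb}_{R_1\times R_2/k} = I/I^{>1} \cong I_1/I_1^{>1}\oplus I_2/I_2^{>1} = \Omega^{1,\Tamb}_{R_1/k}\oplus\Omega^{1,\Tamb}_{R_2/k}, \]
and by construction the $i$-th summand is supported at the idempotent $e_i$, i.e. $R_1\times R_2$ acts on it through the projection onto $R_i$, which is exactly the $R_1\times R_2$-module structure asserted in the statement.

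The main obstacle is the second step: one must verify carefully that forming $I^{>1}$ — an operation involving the norm maps, which have no classical analogue — respects a decomposition into factors cut out by orthogonal idempotents. This reduces to the facts that norms in a product Tambara functor are computed coordinatewise and that $\Nm_H^K$ of an idempotent is again an idempotent, so that norms never produce cross terms between the factors; the remaining points (the square of an idempotent-generated ideal, and identifying the diagonal blocks) are routine. Alternatively, one can bypass $I^{>1}$ entirely and argue via Hill's representability theorem \cite[Theorem 5.7]{Hil17}: an $R_1\times R_2$-module is precisely a pair $(M_1,M_2)$ with $M_i$ an $R_i$-module, and a genuine $k$-derivation $R_1\times R_2\to M_1\oplus M_2$ unwinds — using the Leibniz rule and the norm axiom of \cref{def:genuine-k-derivation}, again coordinatewise, together with the vanishing $d(0,r_2)\big|_{M_1}=0$ obtained from the Leibniz rule and $e_2\cdot M_1 = 0$ — into a pair of genuine $k$-derivations $R_i\to M_i$; naturality in $M$ and the Yoneda lemma in $R_1\times R_2$-modules then yield the isomorphism with the stated module structure.
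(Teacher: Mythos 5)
Your primary argument is correct, but it takes a genuinely different route from the paper. The paper proves the lemma by corepresentability: it checks that $\mathrm{Der}_k(R_1\times R_2,M)\cong \mathrm{Der}_k(R_1,M_1)\oplus\mathrm{Der}_k(R_2,M_2)$ for the canonical decomposition $M\cong M_1\oplus M_2$ of a module over a product (citing \cite[Proposition 5.3]{Wis25a}), and then concludes by Yoneda and \cite[Theorem 5.7]{Hil17} --- i.e.\ exactly the alternative you sketch in your final paragraph. Your main proof instead computes $I/I^{>1}$ directly from the decomposition $R\boxtimes_k R\cong\prod_{i,j}R_i\boxtimes_k R_j$ (legitimate, since the paper itself uses that $-\boxtimes_k R$ commutes with finite products), and the two points you must control are handled correctly: the off-diagonal factors are unital ideals contained in $I$, so their units $e_i\boxtimes e_j$ lie in $I^2\subseteq I^{>1}$ and the whole factor is killed in the quotient; and since all structure maps of a finite product of Tambara functors, norms included, are computed coordinatewise, the generators of $I^{>1}$ decompose componentwise with diagonal components exactly $I_i^{>1}$, so no cross terms arise. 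The trade-off: the paper's argument is shorter and outsources the norm bookkeeping to Hill's universal property, at the cost of leaning on two external results; your computation is more self-contained and more explicit (it identifies precisely which part of the augmentation ideal survives), and it is the natural Tambara analogue of the classical idempotent argument for $\Omega^1_{R_1\times R_2}$. Both are valid; you might note that the displayed formula in the statement has a typo ($\Omega^{1,\Tamb}_{R_i/R}$ should read $\Omega^{1,\Tamb}_{R_i/k}$), which your conclusion correctly supplies.
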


\begin{proof}
	One straightforwardly checks that there is a natural isomorphism 
    \[ 
        \mathrm{Der}_k(R_1 \times R_2, M) \cong \mathrm{Der}_k(R_1,M_1) \oplus \mathrm{Der}_k(R_2,M_2) .
    \] 
    where $M \cong M_1 \oplus M_2$ is the isomorphism of \cite[Proposition 5.3]{Wis25a}. The result follows from Yoneda's lemma and \cite[Theorem 5.7]{Hil17}.
\end{proof}

\begin{proposition}\label{prop:prod-of-etale-is-etale}
    Let $R_1$ and $R_2$ be $k$-algebras.
    \begin{enumerate}
        \item $R_1 \times R_2$ is flat over $k$ if and only if $R_1$ and $R_2$ are flat over $k$.
        \item $\Omega^{1,\Tamb}_{R_1 \times R_2/k} = 0$ if and only if $\Omega^{1,\Tamb}_{R_1/k} = 0$ and $\Omega^{1,\Tamb}_{R_2/k} = 0$.
        \item $R_1 \times R_2$ is finitely presented over $k$ if and only if $R_1$ and $R_2$ are.
    \end{enumerate}
    Consequently $R_1$ and $R_2$ are \'{e}tale $k$-algebras if and only if $R_1 \times R_2$ is an \'{e}tale $k$-algebra.
\end{proposition}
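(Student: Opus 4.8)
The plan is to establish the three itemized equivalences separately and then combine them, since by definition a map is \'{e}tale precisely when it is flat, has vanishing genuine K\"{a}hler differentials, and is finitely presented. Item (2) is immediate from \cref{lem:genuine-differentials-of-product}: a direct sum of modules vanishes iff each summand does, so $\Omega^{1,\Tamb}_{R_1 \times R_2/k} \cong \Omega^{1,\Tamb}_{R_1/k} \oplus \Omega^{1,\Tamb}_{R_2/k}$ is zero iff both summands are. For item (1) I would use that finite products and finite coproducts agree (biproducts) in the abelian category of $k$-modules, so that $R_1 \times R_2 \cong R_1 \oplus R_2$ as $k$-modules; since $- \boxtimes_k N$ is additive in the first variable this gives a natural isomorphism $(R_1 \times R_2) \boxtimes_k N \cong (R_1 \boxtimes_k N) \oplus (R_2 \boxtimes_k N)$, and a finite direct sum of additive functors between abelian categories is exact iff each summand is exact; hence $R_1 \times R_2$ is flat iff both $R_1$ and $R_2$ are.

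The substantive point is item (3). For the ``only if'' direction I would note that the projection $R_1 \times R_2 \to R_1$ is surjective with kernel the Tambara ideal generated by the single element $(0,1) \in (R_1 \times R_2)(G/G)$: one checks this ideal is exactly $0 \times R_2$, using that norms and transfers act componentwise on a product Tambara functor and that $0 \times R_2$ is already the kernel of a Tambara functor map, hence a Tambara ideal. Thus $R_1$ is a quotient $B/(f)$ of $B := R_1 \times R_2$ by a principal Tambara ideal; and $B/(f) \cong B \boxtimes_{k[x_{G/G}]} k$, the pushout of $k[x_{G/G}] \xrightarrow{f} B$ along $k[x_{G/G}] \to k$ classifying $0$, which is a finite colimit of finitely presented $k$-algebras (as $k$, $k[x_{G/G}]$ and $B$ are finitely presented). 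So $R_1$, and by symmetry $R_2$, is finitely presented whenever $R_1 \times R_2$ is.

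For the ``if'' direction I would work with the compactness characterization of finite presentation. The structural input is that an idempotent $e \in B(G/G)$ splits $B$ as a product of Tambara functors $B \cong B/(1-e) \times B/(e)$: each factor is a Tambara functor because quotients by Tambara ideals are, and the comparison map is a levelwise isomorphism by the Chinese Remainder Theorem applied to the idempotent $\Res_H^G e \in B(G/H)$ in each level. Consequently a $k$-algebra map $R_1 \times R_2 \to B$ amounts to a pair of orthogonal idempotents $e_1 + e_2 = 1$ in $B(G/G)$ — the images of $(1,0)$ and $(0,1)$ — together with $k$-algebra maps $R_i \to B/(1-e_i)$. Given a filtered colimit $B = \colim_d F(d)$, computed levelwise, the two idempotents lift to some $F(d_0)$ and the finitely many relations defining them (idempotence, orthogonality, summing to $1$) hold at some later stage $F(d_1)$; the resulting product decompositions are compatible along the diagram and commute with the colimit, so compactness of $R_1$ and $R_2$ lets the two component maps factor through a common finite stage. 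Hence $R_1 \times R_2$ is compact, i.e. finitely presented; together with the ``only if'' direction this proves item (3).

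Combining (1), (2) and (3) gives the final ``consequently'': $R_1 \times R_2$ is simultaneously flat, has vanishing genuine differentials, and is finitely presented over $k$ iff both $R_1$ and $R_2$ have all three properties, i.e. iff $R_1$ and $R_2$ are \'{e}tale over $k$. I expect the main obstacle to be the ``if'' direction of (3) — precisely, verifying that idempotents split Tambara functors and that this splitting interacts correctly with filtered colimits of $k$-algebras. An alternative packaging would be to prove directly that $k \times k$ is finitely presented over $k$ (for instance $k \times k \cong k[x_{G/G}]/(x^2-x)$) and then deduce finite presentation of $R_1 \times R_2$ over $k$ from its finite presentation over $k \times k$, using the equivalence $(k \times k)\text{-}\Alg \simeq k\text{-}\Alg \times k\text{-}\Alg$ together with \cref{prop:etale-preserved-under-comp}(1); but this still rests on the same idempotent-splitting fact.
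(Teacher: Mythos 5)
Your proposal is correct and follows the paper's strategy almost step for step: flatness via the biproduct decomposition and additivity of $\boxtimes_k$; the differentials statement directly from \cref{lem:genuine-differentials-of-product}; the ``only if'' half of (3) by presenting $R_1$ as the quotient of $R_1 \times R_2$ by the Tambara ideal generated by $(0,1)$ placed in level $G/G$ (the paper phrases this as a coequalizer of two maps out of $k[x_{G/G}]$ classifying $0$ and the idempotent, which is the same finite colimit as your pushout); and the ``if'' half by decomposing $\Hom_k(R_1 \times R_2, S)$ over idempotents and lifting them through a filtered colimit before invoking compactness of $R_1$ and $R_2$.

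The one genuine divergence is the idempotent-splitting input in the ``if'' direction of (3). You index the decomposition by idempotents $e \in S(G/G)$ and justify the splitting $S \cong S/(1-e) \times S/(e)$ levelwise via restriction and CRT; the paper instead indexes by $G$-fixed idempotents $y \in S(G/e)$ and cites \cite[Proposition 3.6]{Wis25a}, which is the a priori stronger assertion that a fixed idempotent in the bottom level already splits the whole Tambara functor. Your version is self-contained but, as you anticipate, the point needing care is that the levelwise ideals $(1-\Res_H^G e)\,S(G/H)$ really form a Tambara ideal, i.e.\ that the norm condition holds; this does work, since by the exponential formula every cross term in $\Nm(x+y)$ with $x$ in the $e$-component and $y$ in the $(1-e)$-component is divisible by $\Res(e)\cdot\Res(1-e) = 0$, so the quotients are honest Tambara functors and the comparison map is a levelwise isomorphism. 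A small point in your favour: you record the fibre of the decomposition over a fixed idempotent as the product $\Hom_k(R_1, eS) \times \Hom_k(R_2, (1-e)S)$, which is the correct statement; the paper's \cref{eq:k-alg-product-hom-set} writes a disjoint union in that position, which appears to be a typo.
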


\begin{proof}
    First, we note that products and direct sums are the same thing for modules. Since $\boxtimes_k$ is additive, the direct sum of flat $k$-modules is flat, and summands of flat $k$-modules are flat. Second, \cref{lem:genuine-differentials-of-product} implies that $\Omega^{1,\Tamb}_{R_1 \times R_2/k} \cong 0$ if and only if $\Omega^{1,\Tamb}_{R_1/k} \cong 0$ and $\Omega^{1,\Tamb}_{R_2/k} \cong 0$.

    Lastly, we establish the claim about finite presentability. If $R_1 \times R_2$ is a compact $k$-algebra, then $k \rightarrow R_1$ is the coequalizer of the two maps from $k[x_G]$ from the free polynomial $k$-algebra on a generator in level $G/G$ respectively classifying the choice of zero and the choice of the idempotent generating the kernel of the projection 
    \[ 
        \left( R_1 \times R_2 \right)(G/G) \cong R_1(G/G) \times R_2(G/G) \rightarrow R_1(G/G) . 
    \] 
    Therefore $R_1$ is a finite colimit of finitely presented $k$-algebras, hence is finitely presented. By symmetry $R_2$ is also finitely presented.
	 
	Conversely, suppose $R_1$ and $R_2$ are finitely presented $k$-algebras. In $(R_1 \times R_2)(G/e)$, let $x_1 = (1,0)$ and $x_2 = (0,1)$. If $S$ is any $k$-algebra, the set $\Hom_k(R_1 \times R_2, S)$ decomposes as the disjoint union 
    indexed by $G$-fixed idempotents $y \in S(G/e)$ of the set of $k$-algebra morphisms $R_1 \times R_2 \rightarrow S$ which send $x_1$ to $y$. Writing $S = yS_1 \times (1-y)S_2$ (using \cite[Proposition 3.6]{Wis25a}), we thus have
    \begin{equation}\label{eq:k-alg-product-hom-set} 
        \Hom_k(R_1 \times R_2, S) \cong \bigsqcup_y \Hom_k(R_1,yS) \sqcup \Hom_k(R_2,(1-y)S) . 
    \end{equation}
    
    Let $F : \mathsf{D} \rightarrow k \text{-} \Alg$ be a filtered diagram and $y$ a $G$-fixed idempotent of $\colim_{\mathsf{D}} F(d)$. Since filtered colimits are computed levelwise, and filtered colimits of rings commute with the choice of a finite list of elements satisfying a finite list of equations (by compactness of all quotients of $\Z[x_1,\dots,x_n]$ in the category of rings), by passing to a cofinal diagram, we may assume each $F(d)(G/e)$ contains a $G$-fixed idempotent $y_d$ mapping to $y$ in the colimit. By \cite[Proposition 3.6]{Wis25a} we are entitled to write $F(d) \cong y_d F(d) \times (1-y_d) F(d)$ as $k$-algebras. 
    
    In light of \cref{eq:k-alg-product-hom-set} and the fact that filtered colimits commute with all finite products and all coproducts, it suffices to show for each $y$ that we have an isomorphism 
    \begin{align*} 
        \Hom_k(R_1, y \cdot \colim_{\mathsf{D}} F(d)) & \sqcup \Hom_k(R_2,(1-y) \cdot \colim_{\mathsf{D}} F(d)) \\ 
        & \cong \colim_{\mathsf{D}} ( \Hom_k(R_1,y_d F(d)) \sqcup \Hom_k(R_1,(1-y_d)F(d)) ) .
    \end{align*}
    But this follows immediately from compactness of $R_1$ and $R_2$ over $k$.
\end{proof}

Next, we show that flat base-change preserves \'{e}taleness. We start by showing that genuine K\"{a}hler differentials enjoy the expected base-change property for flat morphisms.

\begin{proposition}\label{prop:Kahler-differentials-of-base-change}
    Let $f : k \rightarrow \ell$ be a flat map of Tambara functors, $R$ a $k$-algebra, and $S := R \boxtimes_k \ell$. Then we have a natural isomorphism 
    \[ 
        \Omega^{1,\Tamb}_{S/\ell} \cong \Omega^{1,\Tamb}_{R/k} \boxtimes_k \ell
    \]
    of $S$-modules.
\end{proposition}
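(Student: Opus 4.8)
The plan is to compute both sides directly from the definition $\Omega^{1,\Tamb}_{(-)/(-)}=I/I^{>1}$ and to show that the formation of $I/I^{>1}$ commutes with flat base change.

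First I would identify the relevant diagonal ideals. Base change along $k\to\ell$ is a left adjoint $k\text{-}\Alg\to\ell\text{-}\Alg$, $A\mapsto A\boxtimes_k\ell$, so it preserves coproducts; since $S\boxtimes_\ell S$ is the coproduct of two copies of $S=R\boxtimes_k\ell$ in $\ell$-algebras, this yields a canonical isomorphism
\[
    S\boxtimes_\ell S\;\cong\;(R\boxtimes_k R)\boxtimes_k\ell
\]
of $\ell$-algebras under $S$, identifying the multiplication map $\mu_S\colon S\boxtimes_\ell S\to S$ with $\mu_R\boxtimes_k\mathrm{id}_\ell$, where $\mu_R\colon R\boxtimes_k R\to R$ is the multiplication map of $R$. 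Writing $I=\ker\mu_R$ and $J=\ker\mu_S$, I would then apply the functor $-\boxtimes_k\ell$ --- exact because $\ell$ is flat over $k$ --- to the short exact sequence of $(R\boxtimes_k R)$-modules $0\to I\to R\boxtimes_k R\to R\to 0$; comparing with $0\to J\to (R\boxtimes_k R)\boxtimes_k\ell\to S\to 0$ identifies $J$ with $I\boxtimes_k\ell$ as a Tambara ideal of $S\boxtimes_\ell S$.

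The heart of the proof is then the claim that, under this identification, $J^{>1}=I^{>1}\boxtimes_k\ell$. One first checks that $I^{>1}\boxtimes_k\ell$ really is a Tambara ideal of $(R\boxtimes_k R)\boxtimes_k\ell$: flatness carries the sequence $0\to I^{>1}\to R\boxtimes_k R\to(R\boxtimes_k R)/I^{>1}\to 0$ to one exhibiting $I^{>1}\boxtimes_k\ell$ as the kernel of a morphism of Tambara functors. The inclusion $I^{>1}\boxtimes_k\ell\subseteq J^{>1}$ is then formal: the canonical Tambara morphism $R\boxtimes_k R\to(R\boxtimes_k R)\boxtimes_k\ell$ carries $I$ into $J$, hence carries the generating products $xy$ ($x,y\in I$) and nontrivial norms $\Nm_H^K z$ ($z\in I$) of $I^{>1}$ into $J^{>1}$, and $J^{>1}$ is an ideal. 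For the reverse inclusion I would use the levelwise description of the box product from \cite{Maz13}: every element of $(I\boxtimes_k\ell)(G/H)$ is a sum of transfers $\Tr_L^H(\overline\imath\cdot\overline\lambda)$ with $\overline\imath$ in the image of $I(G/L)$ and $\overline\lambda$ in the image of $\ell(G/L)$. Expanding a product of two such elements by Frobenius reciprocity, and a nontrivial norm of such an element by the norm--transfer (exponential) formula, one finds in each case a sum of transfers of terms each containing either a product $\imath_1\imath_2$ of two elements of $I$ or a nontrivial norm of an element of $I$; every such term lies in $I^{>1}\boxtimes_k\ell$, so $J^{>1}\subseteq I^{>1}\boxtimes_k\ell$, and equality holds.

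Finally I would apply the right-exact functor $-\boxtimes_k\ell$ to $0\to I^{>1}\to I\to\Omega^{1,\Tamb}_{R/k}\to 0$, obtaining $\Omega^{1,\Tamb}_{R/k}\boxtimes_k\ell\cong(I\boxtimes_k\ell)/(I^{>1}\boxtimes_k\ell)$, and combine this with $\Omega^{1,\Tamb}_{S/\ell}=J/J^{>1}=(I\boxtimes_k\ell)/(I^{>1}\boxtimes_k\ell)$ to get the stated isomorphism; naturality in $R$ and compatibility with the $S$-module structures are inherited from the naturality of each step. I expect the one genuinely laborious point to be the reverse inclusion $J^{>1}\subseteq I^{>1}\boxtimes_k\ell$: this is where the bookkeeping of transfers and norms inside a box product has to be carried out, and it is also where the flatness hypothesis does real work, via the explicit description $J\cong I\boxtimes_k\ell$ of the diagonal ideal of $S$.
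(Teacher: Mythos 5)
Your proposal is correct and follows essentially the same route as the paper's proof: identify $S\boxtimes_\ell S$ with $(R\boxtimes_k R)\boxtimes_k\ell$ via preservation of coproducts, use flatness to identify the diagonal ideal $J$ with $I\boxtimes_k\ell$, show $J^{>1}=I^{>1}\boxtimes_k\ell$, and pass to quotients by right-exactness. The only difference is that where you carry out the inclusion $J^{>1}\subseteq I^{>1}\boxtimes_k\ell$ by expanding transfers, products, and norms levelwise, the paper compresses this into the single assertion $J^{>1}\cong I^{>1}\boxtimes_k\ell^{>1}$ together with $\ell^{>1}=\ell$ for unital $\ell$.
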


\begin{proof}
    Let $I$ be the kernel of $R \boxtimes_k R \rightarrow R$ and $J$ the kernel of $S \boxtimes_\ell S \rightarrow S$. We have a commutative diagram 
    \[ \begin{tikzcd}
        (R \boxtimes_k  R) \boxtimes_k \ell \arrow[r, "\cong"] \arrow[d] & S \boxtimes_\ell S \arrow[d] \\
        R \boxtimes_k \ell \arrow[r, "\cong"] & S 
    \end{tikzcd} \]
    so that by flatness of $\ell$ over $k$, $I 
    \boxtimes_k \ell \cong J$. Note that this is an isomorphism of non-unital Tambara functors. It follows that $I^{>1} \boxtimes_k \ell^{>1} \cong J^{>1}$. Since $\ell$ is unital, $\ell^{>1} = \ell$, whence we compute 
    \[ 
        \Omega^{1,\Tamb}_{S/\ell} = J/J^{>1} \cong (I \boxtimes_k \ell)/(I^{>1} \boxtimes_k \ell) \cong (I/I^{>1}) \boxtimes_k \ell = \Omega^{1,\Tamb}_{R/k} \boxtimes_k \ell 
    \] 
    as desired.
\end{proof}

\begin{remark}
    When $G$ is the trivial group \cref{prop:Kahler-differentials-of-base-change} is known to hold without the flatness assumption. One way to remove the flatness assumption in the Tambara setting would be to mimic the proof of \cite[Lemma 10.131.12]{stacks-project}. The only difficulty one encounters is showing that, if $d : R \rightarrow M$ is a genuine $k$-differential, then $d \otimes_k 0 : S \rightarrow M \boxtimes_k \ell$ is a genuine $\ell$-differential. 
    
    Checking the first and third conditions in \cref{def:genuine-k-derivation} for $d \otimes_k 0$ to be a differential is straightforward, so that we obtain a Green functor analogue of \cref{prop:Kahler-differentials-of-base-change} without a flatness assumption below. However, checking the second condition of \cref{def:genuine-k-derivation} amounts to a very complicated-looking computation involving the exponential formula for norms. When $G = C_p$, the formula simplifies enough to check by hand, so that \cref{prop:Kahler-differentials-of-base-change} can be seen to hold without the flatness assumption.
\end{remark}

\begin{lemma}\label{lem:base-change-for-Green-Kahler-differentials}
    Let $k \rightarrow R$ and $k \rightarrow \ell$ be maps of Green functors. Then we have a natural isomorphism 
    \[ 
        \Omega^{1,\Green}_{R/k} \boxtimes_k \ell \cong \Omega^{1,\Green}_{R \boxtimes_k \ell / \ell}
    \] 
    of $R \boxtimes_k \ell$-modules.
\end{lemma}

\begin{proof}
    The argument of \cite[Lemma 10.131.12]{stacks-project} goes through, using the fact that $\Omega^{1,\Green}_{R/k}$ carries the universal Green functor derivation of $R$ over $k$.
\end{proof}

\begin{proposition}\label{prop:base-change-of-etale-is-etale}
    Let $k \rightarrow \ell$ be a map of $G$-Tambara functors. The base-change functor $- \boxtimes_k \ell$ enjoys the following properties:
    \begin{enumerate}
        \item if $R$ is finitely presented over $k$, then $R \boxtimes_k \ell$ is finitely presented over $\ell$.
        \item if $R$ is flat over $k$, then $R \boxtimes_k \ell$ is flat over $\ell$.
        \item if $\Omega^{1,\Tamb}_{R/k} = 0$, then $\Omega^{1,\Tamb}_{R \boxtimes_k \ell/\ell} = 0$.
    \end{enumerate}
    Hence base-change preserves \'{e}tale morphisms.
\end{proposition}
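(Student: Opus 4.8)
The plan is to verify the three enumerated properties in turn and then assemble them into the final statement; the only substantive input has already been isolated as \cref{prop:Kahler-differentials-of-base-change}, so the remaining work is formal.

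For (1), I would use that $-\boxtimes_k\ell$ is a left adjoint, hence preserves all colimits, together with the observation that it carries the free polynomial $k$-algebra $k[y_X]$ to the free polynomial $\ell$-algebra $\ell[y_X]$: indeed $\Hom_\ell(k[y_X]\boxtimes_k\ell,-)\cong\Hom_k(k[y_X],-)\cong\ev_X$ as functors on $\ell$-algebras, so $k[y_X]\boxtimes_k\ell$ represents $\ev_X$ on $\ell$-algebras and is therefore $\ell[y_X]$. Since a finitely presented $k$-algebra $R$ is by definition a coequalizer of free polynomial $k$-algebras on finitely many generators, applying the colimit-preserving functor $-\boxtimes_k\ell$ exhibits $R\boxtimes_k\ell$ as a coequalizer of free polynomial $\ell$-algebras on finitely many generators, i.e. as a finitely presented $\ell$-algebra.

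For (2), I would record the natural isomorphism $(R\boxtimes_k\ell)\boxtimes_\ell M\cong R\boxtimes_k M$ for an $\ell$-module $M$ (regarded as a $k$-module by restriction along $k\to\ell$), which comes from associativity of the relative box product. Restriction of modules along $k\to\ell$ does not change the underlying Mackey functor, so it is exact; hence an exact sequence of $\ell$-modules is in particular an exact sequence of $k$-modules, and flatness of $R$ over $k$ forces $(R\boxtimes_k\ell)\boxtimes_\ell-$ to be exact on $\ell$-modules. For (3), this is immediate from \cref{prop:Kahler-differentials-of-base-change}: when $k\to\ell$ is flat we have $\Omega^{1,\Tamb}_{R\boxtimes_k\ell/\ell}\cong\Omega^{1,\Tamb}_{R/k}\boxtimes_k\ell$, which vanishes as soon as $\Omega^{1,\Tamb}_{R/k}=0$.

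To conclude that flat base-change preserves \'{e}tale morphisms: if $R$ is \'{e}tale over $k$ and $k\to\ell$ is flat, then $R\boxtimes_k\ell$ is finitely presented over $\ell$ by (1), flat over $\ell$ by (2) (an \'{e}tale map is in particular flat), and has vanishing genuine K\"{a}hler differentials over $\ell$ by (3); hence $\ell\to R\boxtimes_k\ell$ is \'{e}tale. I do not anticipate a genuine obstacle here. The one point deserving a little care is the identification $k[y_X]\boxtimes_k\ell\cong\ell[y_X]$ and, relatedly, the claim that the coequalizer presentation is genuinely preserved by base-change, but both follow purely from $-\boxtimes_k\ell$ being a left adjoint together with the relevant universal properties.
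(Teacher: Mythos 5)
Your proposal is correct and follows essentially the same route as the paper: base-change preserves colimits and free polynomial algebras (hence finite presentation), the associativity isomorphism $(R \boxtimes_k \ell) \boxtimes_\ell - \cong R \boxtimes_k -$ gives flatness, and \cref{prop:Kahler-differentials-of-base-change} handles the K\"{a}hler differentials. The extra detail you supply (the Yoneda identification $k[y_X] \boxtimes_k \ell \cong \ell[y_X]$ and the exactness of restriction of modules) is accurate and merely fills in steps the paper leaves implicit.
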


\begin{proof}
    Base-change preserves colimits and free polynomial algebras, hence preserves being finitely presented. If $R$ is flat over $k$, then 
    \[ 
        R \boxtimes_k \ell \boxtimes_\ell - \cong R \boxtimes_k - 
    \] 
    so that $R \boxtimes_k \ell$ is a flat $\ell$-module. 
    
    Finally, if $\Omega^{1,\Tamb}_{R/k} \cong 0$, then by \cref{thm:Green-etale-implies-Tambara-etale} $\Omega^{1,\Green}_{R/k} = 0$. \cref{lem:base-change-for-Green-Kahler-differentials} then implies that $\Omega^{1,\Green}_{R \boxtimes_k \ell / \ell} \cong 0$. Since $\Omega^{1,\Tamb}_{R \boxtimes_k \ell / \ell}$ is a quotient of $\Omega^{1,\Green}_{R \boxtimes_k \ell / \ell}$, we deduce that the former is zero, as desired.
\end{proof}

Finally, we study the interaction between coinduction and \'{e}taleness.

\begin{theorem}\label{thm:coind-are-etale}
	Let $k$ be any $G$-Tambara functor. Then the adjunction unit 
	\[ 
		k \rightarrow \Coind_H^G \Res_H^G k
	\] 
	is \'{e}tale.
\end{theorem}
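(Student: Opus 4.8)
The plan is to verify the three defining properties of an étale morphism---finite presentation, flatness, and vanishing of genuine Kähler differentials---for the unit $k \to \Coind_H^G \Res_H^G k$, using the structural results already established. Flatness is immediate: \cref{lem:free-k-modules-are-flat} shows that $k \to \Coind_H^G \Res_H^G k$ is flat (indeed, \cref{lem:coind-of-res-is-box-product} identifies the base-change endofunctor $\Coind_H^G \Res_H^G k \boxtimes_k -$ with the exact endofunctor $\Coind_H^G \Res_H^G(-)$). So the work is concentrated in finite presentation and in the differentials.

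For finite presentation, the natural move is to reduce to the bottom level and invoke the equivalence between $\Coind_e^G$-algebras and rings from \cite{SSW24}. More precisely, I would first reduce to the case $H = e$: there is a transitivity isomorphism $\Coind_H^G \Res_H^G k \cong \Coind_e^G \Res_e^G(\text{something})$? --- no; instead I would argue directly. Alternatively, and more robustly, I would use \cref{lem:coind-adj-unit-is-base-changed}: the unit $k \to \Coind_H^G \Res_H^G k$ is the base-change along $k \to \Coind_H^G \Res_H^G k$ of \dots\ that is circular, so instead observe that \cref{lem:coind-adj-unit-is-base-changed} exhibits $\Coind_H^G \Res_H^G k$ as $\Coind_H^G \Res_H^G \A_G \boxtimes_{\A_G} k$ when $k$ receives the unique map from the Burnside Tambara functor. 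By \cref{prop:base-change-of-etale-is-etale}(1), finite presentation is preserved under base change, so it suffices to treat $k = \A_G$, or even to show $\A_H \to \Coind_H^G \Res_H^G \A_G$ --- hmm, this still needs an argument at the level of $\A_G$. The cleanest route: show directly that $\Coind_H^G \Res_H^G k$ is a finitely presented $k$-algebra by exhibiting it as a finite colimit of free polynomial $k$-algebras, using the fact that $\Coind_H^G$ of a free $\Res_H^G k$-polynomial algebra on a generator at level $H/L$ is a free $k$-polynomial algebra at level $G/L$ (this is the content of $\Coind$ being right adjoint to $\Res$ at the level of representing objects), together with the observation that $\Res_H^G k$ itself is the trivial ($0$-generator) free algebra, so $\Coind_H^G \Res_H^G k$ is a filtered colimit presentation of free algebras; one then checks compactness via \eqref{eq:k-alg-product-hom-set}-style manipulations. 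I expect this to be the main obstacle --- pinning down that $\Coind_H^G \Res_H^G k$ is genuinely \emph{finitely} presented (not merely finitely generated) over an arbitrary $k$, without a Hilbert basis hypothesis.

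For the genuine Kähler differentials, I would first reduce to $k = \A_G$ by \cref{prop:Kahler-differentials-of-base-change}: once we know $k \to \Coind_H^G \Res_H^G k$ is flat (already done) and $\Omega^{1,\Tamb}_{\Coind_H^G \Res_H^G \A_G / \A_G} = 0$, the base-change formula gives $\Omega^{1,\Tamb}_{\Coind_H^G \Res_H^G k / k} \cong \Omega^{1,\Tamb}_{\Coind_H^G \Res_H^G \A_G / \A_G} \boxtimes_{\A_G} k = 0$, where I am using \cref{lem:coind-adj-unit-is-base-changed} to recognize $\Coind_H^G \Res_H^G k$ as the base change of $\Coind_H^G \Res_H^G \A_G$. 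To kill the differentials over $\A_G$, I would use the derivation description (\cite[Theorem 5.7]{Hil17}): it suffices to show every genuine $\A_G$-derivation $d\colon \Coind_H^G \Res_H^G \A_G \to M$ vanishes. By the $(\Res_H^G, \Coind_H^G)$-adjunction and the fact that $\Res_H^G$ is strong symmetric monoidal (so sends the unit to the unit and derivations to derivations), such a derivation is adjoint to a genuine $\Res_H^G \A_G = \A_H$-derivation of $\A_H$ valued in $\Res_H^G M$, which is zero since $\A_H \to \A_H$ is the identity and every derivation vanishes on the image of the source. The one subtlety to check carefully is that the second (norm) axiom of \cref{def:genuine-k-derivation} interacts correctly with the adjunction and with the double-coset formula for $\Res \circ \Coind$; I would handle this by noting that the component of the counit $\Res_H^G \Coind_H^G N \to N$ at the identity double coset is an isomorphism, and derivations pull back along it.

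Alternatively, and perhaps most efficiently, one can sidestep the differentials computation entirely: since $\Coind_H^G \Res_H^G k \boxtimes_k \Coind_H^G \Res_H^G k \cong \Coind_H^G\Res_H^G(\Coind_H^G \Res_H^G k)$ by \cref{lem:coind-adj-unit-is-base-changed}, and the multiplication map corresponds under this identification to $\Coind_H^G\Res_H^G$ applied to the counit-type map, one can try to show the kernel $I$ of the multiplication is idempotent in the strong sense $I = I^{>1}$ directly, using that $\Res_H^G$ of this kernel splits as a product over double cosets with the identity factor being the kernel of an isomorphism (hence zero) --- forcing $I$ to be built entirely out of norms from proper subgroups, i.e.\ $I = I^{>1}$, so $\Omega^{1,\Tamb} = I/I^{>1} = 0$. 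I would present whichever of these is cleanest after checking the double-coset bookkeeping; the derivation argument is conceptually safest and I would lead with it.
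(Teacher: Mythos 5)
Your overall skeleton (flatness via \cref{lem:free-k-modules-are-flat}, then finite presentation and vanishing of the differentials handled separately) matches the paper, and the flatness step is fine. But there are two genuine gaps. First, finite presentation: you correctly flag it as the main obstacle and then do not close it. The route you call ``cleanest'' rests on the claim that $\Coind_H^G$ of a free polynomial $\Res_H^G k$-algebra is a free polynomial $k$-algebra, and this is false. A free polynomial $k$-algebra $k[y_{G/L}]$ admits maps to every nonzero $k$-algebra (it corepresents $\ev_{G/L}$), whereas by \cite[Corollary G]{Wis25a} any $k$-algebra receiving a map from a coinduced Tambara functor is itself coinduced, so $\Coind_H^G$ of anything admits no maps to non-coinduced algebras and cannot corepresent $\ev_{G/L}$. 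The adjunction runs the other way: $\Coind_H^G$ is a right adjoint, so it is $\Res_H^G$ that takes free polynomial algebras to free polynomial algebras, not $\Coind_H^G$. Your base-change reduction to $k = \A_G$ is valid but buys nothing, since finite presentation over $\A_G$ is exactly as hard as the general case (and no Hilbert basis theorem is available for arbitrary $G$). The paper instead proves compactness directly: for a filtered diagram $F$, either no $F(d)$ is coinduced, in which case both sides of the compactness condition are empty by \cite[Corollary G]{Wis25a}, or a cofinal subdiagram factors through $\Coind_H^G$, and then the $(\Res_H^G,\Coind_H^G)$-adjunction together with an induction on $|G|$ (via the double coset formula and \cref{prop:prod-of-etale-is-etale}) finishes the computation.

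Second, the differentials. The reduction to $k = \A_G$ by flat base change is legitimate but unnecessary; the real problem is that your derivation argument only sees the identity double coset. The restriction $\Res_H^G \Coind_H^G \Res_H^G k$ is a product over $H\backslash G/H$ of factors $\Coind_{H\cap{}^gH}^H \Res_{H\cap{}^gH}^{{}^gH}{}^g \Res_H^G k$, and by \cref{lem:genuine-differentials-of-product} a genuine derivation of this product decomposes into one derivation per factor. Only the identity factor is the identity map of $\Res_H^G k$; the remaining factors are again adjunction units, now for the subgroups $H \cap {}^gH$ of $H$, and killing their derivations requires an inductive hypothesis on the order of the group --- an induction you never set up. The same omission undermines your ``alternative'' argument that $I = I^{>1}$: knowing that the identity-double-coset factor of $\Res_H^G I$ vanishes does not force $I$ to be generated by norms from proper subgroups; you must also know that the non-identity factors contribute nothing to $J/J^{>1}$, which is again the inductive hypothesis. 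The paper's proof is an induction on $|G|$ throughout; restoring that induction, and replacing the false ``coinduction of free is free'' step with the compactness argument sketched above, would repair the proposal.
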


\begin{proof}
    Our proof is by induction on $|G|$. If $H = G$, then there is nothing to prove, so we assume $|H| < |G|$.
    
    First, we show $\Coind_H^G \Res_H^G k$ is finitely presented by showing that it is compact. Let $F : \mathsf{D} \rightarrow k \text{-} \Alg$ be a filtered diagram. If $\colim_{\mathsf{D}} F(d)$ is not in the image of $\Coind_H^G$, then no $F(d)$ is in the image of $\Coind_H^G$, using the fact that $\colim_{\mathsf{D}} F(d)$ is an $F(d)$-algebra for each object $d$ of $\mathsf{D}$ and that fact that any Tambara functor receiving a map from a coinduced Tambara functor is coinduced (by \cite[Corollary G]{Wis25a}). Consequently 
    \[ 
        \colim_{\mathsf{D}} \Hom_k(\Coind_H^G \Res_H^G k, F(d)) \cong \colim_{\mathsf{D}} \emptyset \cong \emptyset \cong \Hom_k(\Coind_H^G \Res_H^G k, \colim_{\mathsf{D}} F(d)) .
    \]
    
    On the other hand, if $\colim_{\mathsf{D}} F(d)$ is in the image of $\Coind_H^G$, then since filtered colimits are computed levelwise, we see that for some $d$ in $\mathsf{D}$, the $G$-ring $F(d)(G/e)$ contains a type $H$-idempotent whose distinct $G$-orbits form a complete set of orthogonal idempotents (since filtered colimits commute with the existence of finitely many elements satisfying finite lists of equations by compactness of all quotients of $\Z[x_1,\dots,x_n]$ in the category of rings). Thus there is some $x$ in $\mathsf{D}$ such that $F(x) \cong \Coind_H^G R$, and by passing to a cofinal subdiagram we may assume by \cite[Corollary G]{Wis25a} that $F$ lands in $\Coind_H^G R$-algebras. 
    
    The map $k \rightarrow \Coind_H^G R$ is adjoint to $\Res_H^G k \rightarrow R$, so that applying $\Coind_H^G$ makes $\Coind_H^G R$ into a $\Coind_H^G \Res_H^G k$-algebra. Thus we may view $F$ as landing in $\Coind_H^G \Res_H^G k$-algebras. As filtered colimits are computed levelwise, changing the target category of $F$ in this way does not change the colimit. By \cite[Corollary G]{Wis25a}, $F$ factors through $\Coind_H^G$ in the sense that we may write $F = \Coind_H^G \circ E$ for some functor $E : \mathsf{D} \rightarrow \Res_H^G k \text{-} \Alg$.

    By inductive hypothesis, for each $g \in H \backslash G / H$,
    \[ 
        \Res_H^G k \rightarrow \Coind_{H \cap {}^g H}^H \Res_{H \cap {}^g H}^{{}^g H} {}^g \Res_H^G k 
    \] 
    is finitely presented. Thus, by \cref{prop:prod-of-etale-is-etale} and the double coset formula for the restriction of a coinduction, 
    \[ 
        \Res_H^G k \rightarrow \Res_H^G \Coind_H^G \Res_H^G k 
    \] 
    is finitely presented. Since $\Coind_H^G$ commutes with filtered colimits, we compute 
    \begin{align*}
        \Hom_k(\Coind_H^G \Res_H^G k, \colim_{\mathsf{D}} F(d)) & \cong \Hom_k (\Coind_H^G \Res_H^G k, \Coind_H^G \colim_{\mathsf{D}} E(d)) \\
        & \cong \Hom_{\Res_H^G k}(\Res_H^G \Coind_H^G \Res_H^G k, \colim_{\mathsf{D}} E(d)) \\
        & \cong \colim_{\mathsf{D}} \Hom_{\Res_H^G k}(\Res_H^G \Coind_H^G \Res_H^G k,E(d)) \\
        & \cong \colim_{\mathsf{D}} \Hom_{k}(\Coind_H^G \Res_H^G k,F(d)) . 
    \end{align*}
    
    We have thus shown that $\Coind_H^G \Res_H^G k$ is finitely presented over $k$. Next, by \cref{lem:free-k-modules-are-flat} free $k$-modules are flat. Finally, we check that the genuine K\"{a}hler differentials vanish.

    Let $I$ be the kernel of 
	\[ 
		\Coind_H^G \Res_H^G k \boxtimes_k \Coind_H^G \Res_H^G k \rightarrow \Coind_H^G \Res_H^G k . 
	\]
    Via \cref{lem:coind-adj-unit-is-base-changed}, the isomorphism
    \[ 
        \Coind_H^G \Res_H^G \Coind_H^G \Res_H^G k \cong \prod_{g \in H \backslash G / H} \Coind_{H \cap {}^g H}^G \Res_{H \cap {}^g H}^{{}^g H} {}^g \Res_H^G k , 
    \] 
    and \cite[Theorem A]{Wis25b}, $I$ is the coinduction of the ideal $J$ defined as the kernel of 
    \begin{align*}
        \prod_{g \in H \backslash G / H} \Coind_{H \cap {}^g H}^H \Res_{H \cap {}^g H}^{{}^g H} {}^g \Res_H^G k & \boxtimes_{\Res_H^G k} \prod_{g \in H \backslash G / H} \Coind_{H \cap {}^g H}^H \Res_{H \cap {}^g H}^{{}^g H} {}^g \Res_H^G k \\
        & \rightarrow \prod_{g \in H \backslash G / H} \Coind_{H \cap {}^g H}^H \Res_{H \cap {}^g H}^{{}^g H} {}^g \Res_H^G k .
    \end{align*}
    Additionally, one straightforwardly checks $I^{>1} = \Coind_H^G (J^{>1})$ (ultimately because $\res_H^G : G \text{-} \mathrm{Set} \rightarrow H \text{-} \mathrm{Set}$ preserves cardinality). Since coinduction preserves quotients we deduce 
	\begin{align*} 
		\Omega^{1,\Tamb}_{\Coind_H^G \Res_H^G k/k} & = I/I^{>1} \\
        & \cong \Coind_H^G (J/J^{>1}) \\
        & \cong \Coind \left( \Omega^{1,\Tamb}_{\prod_{g \in H \backslash G / H} \Coind_{H \cap {}^g H}^H \Res_{H \cap {}^g H}^{{}^g H} {}^g \Res_H^G k/\Res_H^G k} \right) \\ 
        & \cong \Coind_H^G 0 \cong 0
	\end{align*}
	where the second-to-last isomorphism follows from the third statement of \cref{prop:prod-of-etale-is-etale} and the inductive hypothesis.
\end{proof}

\begin{proposition}\label{prop:coind-of-Kahler-differentials}
	Let $k \rightarrow R$ be a morphism of $H$-Tambara functors. Then we have an isomorphism
	\[ 
		\Omega^{1,\Tamb}_{\Coind_H^G R / \Coind_H^G k} \cong \Coind_H^G \Omega^{1,\Tamb}_{R/k} 
	\]
	of $\Coind_H^G R$-modules.
\end{proposition}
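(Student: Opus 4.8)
The plan is to compute both sides from the defining presentation $\Omega^{1,\Tamb}_{A/B} = I_{A/B}/I_{A/B}^{>1}$, where $I_{A/B} = \ker(A \boxtimes_B A \to A)$, and to transport this presentation through $\Coind_H^G$. Set $I = \ker(R \boxtimes_k R \to R)$ and $\tilde I = \ker(\Coind_H^G R \boxtimes_{\Coind_H^G k}\Coind_H^G R \to \Coind_H^G R)$, so that $\Omega^{1,\Tamb}_{R/k} = I/I^{>1}$ and $\Omega^{1,\Tamb}_{\Coind_H^G R/\Coind_H^G k} = \tilde I/\tilde I^{>1}$. I will (i) identify $\Coind_H^G R \boxtimes_{\Coind_H^G k}\Coind_H^G R \cong \Coind_H^G(R \boxtimes_k R)$ compatibly with the multiplication maps, (ii) deduce $\tilde I \cong \Coind_H^G I$ as ideals, and (iii) check that under this identification $\tilde I^{>1}$ corresponds to $\Coind_H^G(I^{>1})$. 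Since $\Coind_H^G$ is exact, being simultaneously a left and a right adjoint of $\Res_H^G$ (cf.\ the proof of \cref{lem:free-k-modules-are-flat}), it preserves cokernels, and then (i)--(iii) give
\[ \Omega^{1,\Tamb}_{\Coind_H^G R/\Coind_H^G k} = \tilde I/\tilde I^{>1} \cong \Coind_H^G I / \Coind_H^G(I^{>1}) \cong \Coind_H^G(I/I^{>1}) = \Coind_H^G \Omega^{1,\Tamb}_{R/k}, \]
with the $\Coind_H^G R$-module structure matching up by construction.

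For (i) I would appeal to \cite[Theorem A]{Wis25b}, the compatibility of coinduction with relative box products; this is exactly the tool invoked in the proof of \cref{thm:coind-are-etale} to recognize a kernel of the shape $\ker\!\big(\Coind_H^G(-) \boxtimes \Coind_H^G(-) \to \Coind_H^G(-)\big)$ as a coinduction. It identifies $\Coind_H^G R \boxtimes_{\Coind_H^G k}\Coind_H^G R$ with $\Coind_H^G(R \boxtimes_k R)$ as $\Coind_H^G R$-algebras, carrying the multiplication map to $\Coind_H^G$ of the multiplication map of $R\boxtimes_k R$; equivalently, one may note that by \cite[Corollary G]{Wis25a} coinduction restricts to an equivalence between $k$-algebras in $H$-Tambara functors and $\Coind_H^G k$-algebras in $G$-Tambara functors, hence preserves the coproduct $R \boxtimes_k R$ and its fold map. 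Taking kernels in underlying Green functors and using exactness of $\Coind_H^G$ then yields (ii): $\tilde I \cong \Coind_H^G I$ as (non-unital) Tambara functors.

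Point (iii) is the heart of the matter, and is the analogue of the observation ``$I^{>1} = \Coind_H^G(J^{>1})$'' in the proof of \cref{thm:coind-are-etale}. Under the identification $(\Coind_H^G I)(X) = I(\res_H^G X)$, a product of elements maps to a product, while a norm along a fold map of $G$-sets maps to the norm along its restriction, which is a fold map of $H$-sets of the \emph{same cardinality} because $\res_H^G \colon G\text{-}\Set \to H\text{-}\Set$ preserves cardinality. Consequently a norm is nontrivial upstairs precisely when the corresponding norm is nontrivial downstairs, so the levelwise generating sets of $\tilde I^{>1}$ and of $\Coind_H^G(I^{>1})$ coincide. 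I expect the main obstacle to be exactly this step: pinning down the norm structure (and hence the $(-)^{>1}$ operation) of a coinduced Tambara functor explicitly enough that the matching of nontrivial norms is genuinely justified rather than merely plausible; everything else is formal once (i) is in hand. Assembling (i)--(iii) with the displayed chain of isomorphisms completes the proof, and naturality in $k \to R$ is visible throughout.
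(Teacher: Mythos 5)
Your argument is correct, but it takes a genuinely different route from the paper's. The paper proves this proposition in one line, by deducing it from flat base-change of genuine K\"{a}hler differentials (\cref{prop:Kahler-differentials-of-base-change}) along the unit $k \rightarrow \Coind_H^G \Res_H^G k$, together with \cref{lem:coind-adj-unit-is-base-changed,lem:coind-of-res-is-box-product}. You instead compute $I/I^{>1}$ directly by transporting the whole presentation through coinduction; this is essentially the computation the paper performs \emph{inside} the proof of \cref{thm:coind-are-etale} (there only for the special case of the adjunction unit), promoted to an arbitrary morphism $k \to R$ of $H$-Tambara functors. Your route is more self-contained and isolates the one genuinely nontrivial input --- that $(-)^{>1}$ commutes with $\Coind_H^G$ because $\res_H^G$ preserves fiber cardinalities --- while the paper's route is shorter by reusing its base-change machinery (at the cost of some unwinding to see how a general $H$-level morphism fits into that framework). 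Two small points of care. In step (i), you should note why the coproduct $\Coind_H^G R \boxtimes_{\Coind_H^G k} \Coind_H^G R$, a priori formed in all $\Coind_H^G k$-algebras, agrees with the coproduct in the subcategory of coinduced algebras: this is because any algebra receiving a map from a coinduced Tambara functor is itself coinduced, so the subcategory is closed under this colimit. In step (iii), the phrase ``the levelwise generating sets coincide'' is slightly too strong: the restriction of a $G$-orbit map is a disjoint union of $H$-orbit maps, so a single nontrivial $G$-norm contributes a \emph{product} of nontrivial $H$-norms, and conversely an individual nontrivial $H$-norm generator of $\Coind_H^G(I^{>1})$ at a given level is recovered inside $\tilde I^{>1}$ only after closing under the ideal operations (in particular transfers from lower levels). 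The generated sub-ideals do agree, and this is exactly the point the paper itself dismisses with ``one straightforwardly checks,'' so you are at the paper's own level of rigor here --- and you rightly flag it as the step requiring genuine justification.
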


\begin{proof}
    This follows from flat base-change along $k \rightarrow \Coind_H^G \Res_H^G k$ using \cref{lem:coind-adj-unit-is-base-changed,prop:Kahler-differentials-of-base-change,lem:coind-of-res-is-box-product}.
\end{proof}

\begin{proposition}\label{prop:coind-preserves-etale}
	Let $k \rightarrow R$ be a morphism of $H$-Tambara functors. 
    \begin{enumerate}
        \item If $R$ is finitely presented over $k$, then $\Coind_H^G R$ is finitely presented over $\Coind_H^G k$.
        \item $R$ is flat over $k$ if and only if $\Coind_H^G R$ is flat over $\Coind_H^G k$.
        \item $\Omega^{1,\Tamb}_{R/k} = 0$ if and only if $\Omega^{1,\Tamb}_{\Coind_H^G R/\Coind_H^G k} = 0$.
    \end{enumerate}
    Thus if $R$ is \'{e}tale over $k$, then $\Coind_H^G R$ is \'{e}tale over $\Coind_H^G k$. 
\end{proposition}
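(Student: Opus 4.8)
The plan is to reduce each of the three listed properties to its counterpart ``downstairs'' along the equivalences between $H$-Tambara (resp. Green) data and coinduced $G$-Tambara (resp. Green) data supplied by \cite[Theorem F]{Wis25a} and \cite[Corollary G]{Wis25a}; the statement about étaleness is then immediate from the definition, since being étale means being finitely presented, flat, and having vanishing genuine Kähler differentials, and we will have verified each of these is preserved.

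For (1), recall that \cite[Corollary G]{Wis25a} shows any $G$-Tambara functor receiving a map from a coinduced Tambara functor is itself coinduced, and that $\Coind_H^G$ is fully faithful; consequently $\Coind_H^G$ induces an equivalence $k\text{-}\Alg \simeq \Coind_H^G k\text{-}\Alg$ sending a $k$-algebra $A$ to $\Coind_H^G A$, with $\Hom_{\Coind_H^G k}(\Coind_H^G R, \Coind_H^G A) \cong \Hom_k(R,A)$ naturally in $A$. Because $\Coind_H^G$ is computed levelwise as a finite product and filtered colimits of algebras are computed levelwise, this equivalence commutes with filtered colimits. It follows that $R$ is compact over $k$ if and only if $\Coind_H^G R$ is compact over $\Coind_H^G k$; by \cite[Proposition 4.3]{SSW24} compactness is the same as finite presentation, which gives (1).

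For (2), \cite[Theorem F]{Wis25a} gives an equivalence $k\text{-}\Mod \simeq \Coind_H^G k\text{-}\Mod$ via $\Coind_H^G$; both this functor and its quasi-inverse are exact (the former since $\Coind_H^G$ is a levelwise finite direct sum), and $\Coind_H^G$ is conservative, since if $\Coind_H^G M = 0$ then evaluating at $G/L$ for each $L \subseteq H$ and using that $\res_H^G(G/L)$ contains $H/L$ as the identity double coset summand shows $M(H/L) = 0$, so $M = 0$. The key point is that this equivalence is moreover symmetric monoidal, i.e. there is a natural isomorphism
\[
	\Coind_H^G M \boxtimes_{\Coind_H^G k} \Coind_H^G M' \cong \Coind_H^G (M \boxtimes_k M').
\]
To prove the display, apply the strong symmetric monoidal functor $\Res_H^G$ to both sides; by the double coset formula for the restriction of a coinduction each side becomes a product indexed by $H \backslash G / H$, and the two agree on the identity double coset factor with $M \boxtimes_k M'$, whence by \cite[Theorem F and Proposition 5.8]{Wis25a} (as in the proof of \cref{lem:coind-adj-unit-is-base-changed}) they are isomorphic as $\Coind_H^G k$-modules. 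Granting this, $R$ is flat over $k$ precisely when $R \boxtimes_k -$ is exact, precisely when $\Coind_H^G R \boxtimes_{\Coind_H^G k} \Coind_H^G(-)$ is exact, and---since every $\Coind_H^G k$-module is of the form $\Coind_H^G N$ and $\Coind_H^G$ is exact and conservative---precisely when $\Coind_H^G R \boxtimes_{\Coind_H^G k} -$ is exact, i.e. when $\Coind_H^G R$ is flat over $\Coind_H^G k$.

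Part (3) is immediate from \cref{prop:coind-of-Kahler-differentials}, which identifies $\Omega^{1,\Tamb}_{\Coind_H^G R / \Coind_H^G k}$ with $\Coind_H^G \Omega^{1,\Tamb}_{R/k}$, together with the conservativity of $\Coind_H^G$ established above: the former vanishes if and only if the latter does. Combining (1)--(3) yields the final assertion. I expect the only genuine (if routine) obstacle to be the box-product identity displayed in (2)---equivalently, the symmetric monoidality of the equivalence $k\text{-}\Mod \simeq \Coind_H^G k\text{-}\Mod$---which is handled exactly as in \cref{lem:coind-adj-unit-is-base-changed} by restricting to $H$ and comparing identity double coset factors; everything else is bookkeeping with the structural results of \cite{Wis25a}.
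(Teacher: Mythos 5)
Your proof is correct, but for the forward implications it takes a somewhat different route than the paper. The paper deduces all three forward directions at once by viewing $\Coind_H^G k \rightarrow \Coind_H^G R$ as a flat base-change along the adjunction unit (via \cref{lem:coind-adj-unit-is-base-changed} and \cref{prop:base-change-of-etale-is-etale}), and only invokes the equivalence of module categories from \cite[Theorem F]{Wis25a} and the conservativity argument for the backward directions of (2) and (3) --- exactly as you do. You instead run everything, in both directions, through the equivalences $k\text{-}\Alg \simeq \Coind_H^G k\text{-}\Alg$ (from \cite[Corollary G]{Wis25a}) and $k\text{-}\Mod \simeq \Coind_H^G k\text{-}\Mod$ (from \cite[Theorem F]{Wis25a}). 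For (1) this is genuinely different from the paper's base-change argument: your compactness transfer mirrors the filtered-colimit bookkeeping already used in the proof of \cref{thm:coind-are-etale}, and it buys you the converse of (1) for free; it also avoids having to present the map $\Coind_H^G k \rightarrow \Coind_H^G R$ of $G$-Tambara functors as a base-change of a map of $G$-Tambara functors, which is the one slightly delicate point in the paper's phrasing when $k$ and $R$ are arbitrary $H$-Tambara functors rather than restrictions. The one item you flag as the ``genuine obstacle'' --- the strong symmetric monoidality of $\Coind_H^G$ on module categories --- is precisely part of the cited content of \cite[Theorem F]{Wis25a}, so you may simply quote it rather than reprove it; your sketch of a proof (restrict to $H$, compare identity double-coset factors, conclude by \cite[Theorem F and Proposition 5.8]{Wis25a}) is in any case the same method as the proof of \cref{lem:coind-adj-unit-is-base-changed} and is sound. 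Your part (3) coincides with the paper's argument verbatim.
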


\begin{proof}
    The forwards direction is the special case of \cref{prop:base-change-of-etale-is-etale} applied to flat base-change along $k \rightarrow \Coind_H^G \Res_H^G k$, using \cref{lem:coind-adj-unit-is-base-changed}. The backwards direction on the flatness statement follows from the fact that $\Coind_H^G$ is an exact symmetric monoidal equivalence of abelian categories \cite[Theorem F]{Wis25a}, and the backwards direction on the statement on vanishing of the genuine K\"{a}hler differentials follows from \cref{prop:coind-of-Kahler-differentials} and the fact that $\Coind_H^G$ reflects zero.
\end{proof}

\section{Classification theorems}

We begin by showing that $\ev_{G/e}$ preserves finite affine \'{e}tale group schemes. Throughout this section we will sometimes implicitly use the identification $\Res_H^G \underline{R} \cong \underline{R}$ of constant $H$-Tambara functors at a ring $R$.

\begin{lemma}\label{lem:bottom-level-of-Kahler-differentials}
	Let $k \rightarrow R$ be a map of Tambara functors. We have a natural isomorphism
	\[ 
		\Omega^{1,\Tamb}_{R/k}(G/e) \cong \Omega^1_{R(G/e)/k(G/e)}
	\]
	of $k(G/e)$-modules.
\end{lemma}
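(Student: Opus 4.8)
The plan is to compute the bottom level of the ideal $I = \ker(R \boxtimes_k R \to R)$ and of the subideal $I^{>1}$ directly, and then observe that $\Omega^{1,\Tamb}_{R/k} = I/I^{>1}$ is a levelwise quotient, at which point the classical description of $\Omega^1_{R(G/e)/k(G/e)}$ falls out.

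First I would use that $\Res_e^G$, from $G$-Tambara functors to $e$-Tambara functors (equivalently, commutative rings), is strong symmetric monoidal. Applying it to the augmentation $R \boxtimes_k R \to R$ and evaluating at the unique $e$-orbit identifies $(R \boxtimes_k R)(G/e)$ with $R(G/e) \otimes_{k(G/e)} R(G/e)$, and identifies the augmentation with the multiplication map $\mu : R(G/e) \otimes_{k(G/e)} R(G/e) \to R(G/e)$. Since kernels of maps of modules (indeed of Mackey functors) are computed levelwise, $I(G/e) = \ker \mu$, which is exactly the ideal whose conormal module $\ker\mu/(\ker\mu)^2$ is, by definition, $\Omega^1_{R(G/e)/k(G/e)}$.

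Next I would read off $I^{>1}(G/e)$ from the explicit description of $I^{>1}$: the module $I^{>1}(G/H)$ is generated by $I(G/H)^2$ together with the images of the norms $I(G/K) \to I(G/H)$ for $K$ conjugate to a proper subgroup of $H$. Taking $H = e$, there are no proper subgroups, so the norm contributions are absent and $I^{>1}(G/e) = I(G/e)^2$ — the ideal generated by products of pairs of elements of $I(G/e)$, which coincides with the usual square since $I(G/e)$ is already an ideal.

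Finally, since $I^{>1} \subset I$ is an inclusion of sub-$R$-modules of $R \boxtimes_k R$ and quotients of $R$-modules are computed levelwise, we get
\[
\Omega^{1,\Tamb}_{R/k}(G/e) = I(G/e)/I^{>1}(G/e) = \ker\mu/(\ker\mu)^2 = \Omega^1_{R(G/e)/k(G/e)},
\]
and naturality in the pair $k \to R$ is inherited from the functoriality of each step. I do not expect a genuine obstacle here; the only points needing care are the identification of the bottom level of $\boxtimes_k$ with $\otimes_{k(G/e)}$ via strong monoidality of $\Res_e^G$, and the remark that the norm part of $I^{>1}$ is vacuous at the trivial subgroup, so that $I^{>1}(G/e)$ reduces to the classical ideal square.
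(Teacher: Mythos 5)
Your proposal is correct and follows essentially the same route as the paper: identify the bottom level of $R \boxtimes_k R \to R$ with the multiplication map $R(G/e) \otimes_{k(G/e)} R(G/e) \to R(G/e)$, note that kernels and quotients are computed levelwise, and observe that $I^{>1}(G/e) = I(G/e)^2$ because no nontrivial norms land in level $G/e$. The paper's proof is just a terser version of exactly this argument.
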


\begin{proof}
	Let $I$ denote the kernel of $R \boxtimes_k R \rightarrow R$. In level $G/e$ this map has the form 
	\[ 
		R(G/e) \otimes_{k(G/e)} R(G/e) \rightarrow R(G/e) 
	\] 
	so it suffices to check $I^{>1}(G/e) = I(G/e)^2$. Since the only norms landing in the $G/e$-level of a Tambara functor are honest multiplications, the claim follows.
\end{proof}

\begin{proposition}\label{thm:bottom-level-preserves-finite-affine-etale-group-schemes}
	Fix a relatively finite dimensional $G$-Tambara functor $k$. The functor $\ev_{G/e}$ preserves the following:
	\begin{enumerate}
		\item finitely generated $k$-modules,
		\item flat $k$-modules,
		\item finitely generated $k$-algebras,
		\item finitely presented $k$-algebras, and
		\item cogroup structures over $k$.
	\end{enumerate}
	In particular, it preserves finite affine \'{e}tale group schemes. 
\end{proposition}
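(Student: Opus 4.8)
The plan is to reduce all five clauses to structural properties of $\ev_{G/e}$ and of the restriction functor $\Res_e^G$. The properties I will use are: $\ev_{G/e}$ is exact on $k$-modules, carries levelwise surjections to surjections and quotients by Tambara ideals to quotients of rings; $\ev_{G/e}$ is strong symmetric monoidal, $(M\boxtimes_k N)(G/e)\cong M(G/e)\otimes_{k(G/e)}N(G/e)$ naturally (this is part of the description of the box product in \cite{Maz13}), with $k\mapsto k(G/e)$; and $\Res_e^G$ is exact, strong symmetric monoidal, identifies $M(G/e)$ with the $\Res_e^G k = k(G/e)$-module $\Res_e^G M$, and has an exact right adjoint $\Coind_e^G$ for which the unit $\mathrm{id}\to\Res_e^G\Coind_e^G$ is naturally split (it is the inclusion of the identity double-coset factor in the double coset formula for $e\backslash G/e$). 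I will suppress the Weyl action of $\mathrm{Aut}(G/e)=G$ on the target of $\ev_{G/e}$, which is preserved everywhere below, and reinstate it at the end.

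\emph{Items (1), (3), (4).} A finitely generated $k$-module $M$ admits a surjection from a finite direct sum of free modules $k_{G/H_i}=\Ind_{H_i}^G\Res_{H_i}^G k$, and $\ev_{G/e}(k_{G/H})\cong k(G/e)^{\oplus[G:H]}$ by the double coset formula, so $M(G/e)$ is a quotient of a finite free $k(G/e)$-module: this is (1). For (3) and (4) I use that $\ev_{G/e}(k[y_{G/H}])$ is a polynomial $k(G/e)$-algebra on $[G:H]$ generators, which follows from the explicit structure of free polynomial Tambara functors (or from realizing $k[y_{G/H}]$ as the Tambara norm-induction from $H$ of a free $\Res_H^G k$-algebra on a top-level generator and applying the double coset formula for $\Res_e^G$ of a norm-induction). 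Hence by monoidality $\ev_{G/e}$ sends the free polynomial $k$-algebra $k[y_{G/H_1},\dots,y_{G/H_n}]\cong k[y_{G/H_1}]\boxtimes_k\cdots\boxtimes_k k[y_{G/H_n}]$ to a finitely generated polynomial $k(G/e)$-algebra; since it also preserves surjections this gives (3). For (4), a finitely presented $k$-algebra is a quotient $F/I$ of such an $F$ by a finitely generated Tambara ideal (present the defining coequalizer of free polynomial algebras on finitely many generators as a quotient by the Tambara ideal generated by the differences of the images of the generators, using closure of Tambara ideals under the Tambara operations); then $\ev_{G/e}(F/I)=\ev_{G/e}(F)/I(G/e)$, and $I(G/e)$ is a finitely generated ordinary ideal of $\ev_{G/e}(F)$ because no transfer or nontrivial norm has target in level $G/e$, so $I(G/e)$ is generated by the finitely many Weyl translates of the restrictions of the generators of $I$.

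\emph{Items (2) and (5).} Let $M$ be a flat $k$-module. For an injection $\iota$ of $k(G/e)$-modules, $\Coind_e^G\iota$ is injective (exactness of $\Coind_e^G$), hence so is $M\boxtimes_k\Coind_e^G\iota$ (flatness of $M$), hence so is $\Res_e^G(M\boxtimes_k\Coind_e^G\iota)\cong M(G/e)\otimes_{k(G/e)}(\Res_e^G\Coind_e^G\iota)$ (exactness and monoidality of $\Res_e^G$). Since $\iota$ is a natural retract of $\Res_e^G\Coind_e^G\iota$, the map $M(G/e)\otimes_{k(G/e)}\iota$ is a retract of an injection, hence injective; thus $M(G/e)$ is flat over $k(G/e)$, which is (2). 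For (5), a cogroup structure on a $k$-algebra $R$ consists of a comultiplication $R\to R\boxtimes_k R$, a counit $R\to k$, and an antipode $R\to R$ subject to axioms expressed entirely through $\boxtimes_k$ and the unit $k$; the strong symmetric monoidal functor $\ev_{G/e}$ transports this data and these axioms, producing a cogroup structure on $R(G/e)$ over $k(G/e)$.

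\emph{Conclusion and main obstacle.} If $\Hom_{k\text{-}\Alg}(R,-)$ is a finite affine \'etale group scheme, then $k\to R$ is finite, flat, finitely presented and has $\Omega^{1,\Tamb}_{R/k}=0$; by (1), (2), (4) and \cref{lem:bottom-level-of-Kahler-differentials} (which gives $\Omega^1_{R(G/e)/k(G/e)}\cong\Omega^{1,\Tamb}_{R/k}(G/e)=0$) the map $k(G/e)\to R(G/e)$ is finite \'etale, and by (5) the cogroup structure descends, so $\Hom_{k(G/e)\text{-}\Alg}(R(G/e),-)$ is a finite affine \'etale group scheme over $k(G/e)$ — carrying a compatible Weyl action, so $\ev_{G/e}$ in fact takes values in $(\fet_{k(G/e)})^{BG}$. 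The step I expect to be the main obstacle is (2): a short exact sequence of $k(G/e)$-modules need not lift to one of $k$-modules, so flatness over $k$ cannot be applied directly; routing through the exact right adjoint $\Coind_e^G$ and using that $\mathrm{id}$ is a natural retract of $\Res_e^G\Coind_e^G$ is what makes the argument go through. The other point requiring care is pinning down the behavior of $\ev_{G/e}$ on free polynomial Tambara functors and the bottom level of a finitely generated Tambara ideal — both routine, but the cornerstones of (3) and (4).
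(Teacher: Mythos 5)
Your proof is correct, and while the overall strategy (reduce each clause to exactness, strong symmetric monoidality, and the behavior of $\ev_{G/e}$ on free objects) is the same as the paper's, two of your arguments genuinely diverge from it. For flatness, the paper simply asserts that strong symmetric monoidality of $\ev_{G/e}$ implies preservation of flat modules; your argument --- pushing an injection of $k(G/e)$-modules up through the exact functor $\Coind_e^G$, boxing with $M$, coming back down with the strong monoidal exact $\Res_e^G$, and using that the identity is a natural retract of $\Res_e^G\Coind_e^G$ --- supplies exactly the step that makes this work, since a strong monoidal exact functor need not preserve flat objects in general; this is the careful version of the paper's one-line claim. (Pedantically, your splitting $\mathrm{id}\to\Res_e^G\Coind_e^G$ is the unit of the $(\Ind_e^G,\Res_e^G)$ adjunction transported along $\Ind_e^G\cong\Coind_e^G$, not the unit of $(\Res_e^G,\Coind_e^G)$, but the retract you need is there.) For finite presentation, the paper observes that $\ev_{G/e}$, valued in rings, is left adjoint to $\Coind_e^G$ and therefore preserves the defining coequalizers; combined with its behavior on free polynomial algebras this is immediate, and it spares you both the conversion of a coequalizer presentation into a quotient by a finitely generated Tambara ideal and the verification that $I(G/e)$ is generated by the Weyl translates of the restrictions of the generators (your justification --- no transfer or nontrivial norm lands in level $G/e$ --- is sound, and is the same observation underlying \cref{lem:bottom-level-of-Kahler-differentials}, but it is extra work). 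Two smaller points: for finitely generated modules your surjection from $\bigoplus_i\Ind_{H_i}^G\Res_{H_i}^G k$ shows relative finite dimensionality is not actually needed at level $G/e$ (the paper invokes it here, presumably to quote a levelwise-finiteness statement uniformly), and for the bottom level of free polynomial algebras you sketch a proof of what the paper cites as \cite[Theorem A]{Bru05}. Either way the conclusion, including the compatibility with the Weyl action recorded in \cref{cor:ev-lands-in-etale-group-schemes-with-G-action}, goes through.
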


\begin{proof}
	$\ev_{G/e}$ preserves finitely generated modules by relative finite dimensionality of $k$. Since $\ev_{G/e}$ is strong symmetric monoidal with respect to the box product over $k$ and tensor product over $k(G/e)$, it takes flat $k$-modules to flat $k(G/e)$-modules.
	
	By \cite[Theorem A]{Bru05} all free $k$-algebras on finitely many generators are sent by $\ev_{G/e}$ to free $k(G/e)$-algebras on finitely many generators. Thus $\ev_{G/e}$ preserves finite generation. Since $\ev_{G/e}$ (viewed as landing in the category of rings) is left adjoint to $\Coind_e^G$, it preserves coequalizers, hence finite presentation. Finally, since $\ev_{G/e}$ is strong symmetric monoidal, it also preserves cogroup structures.
\end{proof}

\begin{corollary}\label{cor:ev-lands-in-etale-group-schemes-with-G-action}
    The functor 
    \[ 
        \ev_{G/e} : \fet_k \rightarrow \fet_{k(G/e)} 
    \]
    naturally factors through the forgetful functor 
    \[ 
        (\fet_{k(G/e)})^{BG} \rightarrow \fet_{k(G/e)} 
    \]
    from finite affine \'{e}tale group schemes with $G$-action to finite affine \'{e}tale group schemes.
\end{corollary}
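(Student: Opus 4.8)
The plan is to reduce the statement to two facts that are already available. First, \cref{thm:bottom-level-preserves-finite-affine-etale-group-schemes} tells us that $\ev_{G/e}$ carries a finite affine \'{e}tale group scheme over $k$ to a finite affine \'{e}tale group scheme over $k(G/e)$ --- the vanishing of the K\"{a}hler differentials in the bottom level being supplied by \cref{lem:bottom-level-of-Kahler-differentials}. Second, for every Tambara functor $T$ the ring $T(G/e)$ carries a canonical \emph{Weyl action} of $G$, because $\mathrm{Aut}(G/e) \cong G$ and $T$, being a Mackey functor, is functorial on isomorphisms of $G$-sets; this action is natural in $T$, since any morphism of Tambara functors is, level by level, equivariant for the automorphisms of that $G$-set. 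The remaining work is to glue these together.

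So, given a finite affine \'{e}tale group scheme $X \in \fet_k$ represented by a finite \'{e}tale cogroup $k$-algebra $k \rightarrow R$, I would first invoke \cref{thm:bottom-level-preserves-finite-affine-etale-group-schemes} to see that $k(G/e) \rightarrow R(G/e)$ is a finite \'{e}tale cogroup $k(G/e)$-algebra, so that $\ev_{G/e}(X) \in \fet_{k(G/e)}$. Next I would check that the Weyl action of $G$ on $R(G/e)$ is through automorphisms of this object of $\fet_{k(G/e)}$: the comultiplication, counit and antipode of $R(G/e)$ are the images under the strong symmetric monoidal functor $\ev_{G/e}$ of the corresponding cogroup structure maps of $R$, while the Weyl action is a symmetric monoidal natural automorphism of $\ev_{G/e}$ induced by automorphisms of the $G$-set $G/e$; hence it commutes with all of these structure maps, so each $g \in G$ acts on $\ev_{G/e}(X)$ through $\mathrm{Aut}_{\fet_{k(G/e)}}(\ev_{G/e}(X))$. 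This promotes $\ev_{G/e}(X)$ to an object of $(\fet_{k(G/e)})^{BG}$.

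Finally, for functoriality: a morphism $X \rightarrow Y$ in $\fet_k$ is a map of cogroup $k$-algebras $R_Y \rightarrow R_X$, in particular a map of Tambara functors, so $\ev_{G/e}$ sends it to a $G$-equivariant morphism, i.e., a morphism in $(\fet_{k(G/e)})^{BG}$. This produces the desired lift $\fet_k \rightarrow (\fet_{k(G/e)})^{BG}$ of $\ev_{G/e}$, and post-composing with the forgetful functor manifestly recovers $\ev_{G/e} : \fet_k \rightarrow \fet_{k(G/e)}$. The only step that is not pure bookkeeping is the verification that the Weyl action respects the cogroup structure, and even that is a formal consequence of the strong symmetric monoidality of $\ev_{G/e}$ together with the naturality of the Weyl action in the box product; I do not expect any serious obstacle here.
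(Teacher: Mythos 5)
Your proposal is correct and follows essentially the same route as the paper: cite \cref{thm:bottom-level-preserves-finite-affine-etale-group-schemes} for the underlying object landing in $\fet_{k(G/e)}$, then observe that the Weyl action on $R(G/e)$ is by cogroup automorphisms because the $G/e$-level of the box product is the tensor product with the diagonal $G$-action (which is exactly your statement that the Weyl action is a monoidal natural automorphism of $\ev_{G/e}$). The paper's proof is just a terser version of yours, omitting the explicit functoriality check.
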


\begin{proof}
    Let $R$ represent any finite affine \'{e}tale group scheme over $k$. The Weyl action defines a $G$-action on the ring $\ev_{G/e} R = R(G/e)$. The $G$-action is by cogroup homomorphisms as the $G/e$-level of the box product is the tensor product with the diagonal $G$-action, so it determines a $G$-action on the group scheme $R(G/e)$ represents.
\end{proof}

\begin{theorem}\label{thm:detecting-etale-at-bottom-level}
	Let $k$ be a $G$-Tambara functor and $\ell$ a flat $k$-algebra. Assume either
	\begin{enumerate}
		\item $\ell$ is cohomological and $|G|$ is invertible in $\ell(G/G)$, or
		\item all transfers in $\ell$ are surjective
	\end{enumerate}
	then $\ell$ is formally \'{e}tale over $k$ if and only if $\ell(G/e)$ is formally \'{e}tale over $k(G/e)$. If $\ell$ is finitely presented over $k$, then $\ell$ is \'{e}tale over $k$ if and only if $\ell(G/e)$ is \'{e}tale over $k(G/e)$.
\end{theorem}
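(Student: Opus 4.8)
The plan is to reduce the whole statement to a comparison between the genuine K\"{a}hler differentials $\Omega^{1,\Tamb}_{\ell/k}$ and the ordinary K\"{a}hler differentials $\Omega^{1}_{\ell(G/e)/k(G/e)}$, linking the two via \cref{lem:bottom-level-of-Kahler-differentials} and promoting bottom-level vanishing to global vanishing via \cref{lem:sufficient-conditions-for-all-modules-to-be-FP}. First I would observe that flatness is not really an issue: since $\ell$ is flat over $k$ by hypothesis, $\ell(G/e)$ is automatically flat over $k(G/e)$ by (the proof of) \cref{thm:bottom-level-preserves-finite-affine-etale-group-schemes}. Hence ``formally \'{e}tale'' on each side is equivalent to the vanishing of the corresponding differential module, and the formally \'{e}tale claim becomes the equivalence $\Omega^{1,\Tamb}_{\ell/k} = 0 \iff \Omega^{1}_{\ell(G/e)/k(G/e)} = 0$.

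The forward implication is immediate from \cref{lem:bottom-level-of-Kahler-differentials}, which supplies $\Omega^{1}_{\ell(G/e)/k(G/e)} \cong \Omega^{1,\Tamb}_{\ell/k}(G/e)$. The content is the converse. Here I would use that $\Omega^{1,\Tamb}_{\ell/k} = I/I^{>1}$ is naturally an $\ell$-module: since $I\cdot I \subseteq I^{2} \subseteq I^{>1}$, the quotient $I/I^{>1}$ is a module over $(\ell \boxtimes_{k} \ell)/I \cong \ell$. Thus hypothesis (1) or (2), via \cref{lem:sufficient-conditions-for-all-modules-to-be-FP}, makes every restriction in $\Omega^{1,\Tamb}_{\ell/k}$ injective. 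If $\Omega^{1}_{\ell(G/e)/k(G/e)} = 0$, then $\Omega^{1,\Tamb}_{\ell/k}(G/e) = 0$ by \cref{lem:bottom-level-of-Kahler-differentials}, and for each subgroup $H$ the restriction $\Res_{e}^{H}\colon \Omega^{1,\Tamb}_{\ell/k}(G/H) \to \Omega^{1,\Tamb}_{\ell/k}(G/e) = 0$ is injective, forcing $\Omega^{1,\Tamb}_{\ell/k}(G/H) = 0$. A Mackey functor that vanishes levelwise is zero, so $\Omega^{1,\Tamb}_{\ell/k} = 0$ and $\ell$ is formally \'{e}tale over $k$.

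For the \'{e}tale refinement, I would add the observation that if $\ell$ is finitely presented over $k$, then $\ell(G/e)$ is finitely presented over $k(G/e)$, again by \cref{thm:bottom-level-preserves-finite-affine-etale-group-schemes} (which only needs that $\ev_{G/e}$ preserves coequalizers and carries finitely generated free $k$-algebras to finitely generated free $k(G/e)$-algebras). Combining this with the formally \'{e}tale equivalence already obtained yields $\ell$ \'{e}tale over $k$ $\iff$ $\ell$ formally \'{e}tale over $k$ $\iff$ $\ell(G/e)$ formally \'{e}tale over $k(G/e)$ $\iff$ $\ell(G/e)$ \'{e}tale over $k(G/e)$. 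I do not anticipate a real obstacle; the one point deserving care is the verification that $\Omega^{1,\Tamb}_{\ell/k}$ genuinely carries an $\ell$-module structure, since that is exactly what allows \cref{lem:sufficient-conditions-for-all-modules-to-be-FP} to be invoked. Once that is in place, the proof is just the slogan ``injective restrictions propagate the vanishing of the bottom level to all levels.''
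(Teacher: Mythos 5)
Your proposal is correct and follows essentially the same route as the paper: identify $\Omega^{1,\Tamb}_{\ell/k}(G/e)$ with $\Omega^1_{\ell(G/e)/k(G/e)}$ via \cref{lem:bottom-level-of-Kahler-differentials}, then use \cref{lem:sufficient-conditions-for-all-modules-to-be-FP} on the $\ell$-module $\Omega^{1,\Tamb}_{\ell/k}$ to propagate vanishing of the bottom level to all levels. Your added care about the $\ell$-module structure on $I/I^{>1}$ and about flatness and finite presentation of $\ell(G/e)$ only makes explicit what the paper leaves implicit.
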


\begin{proof}
	By \cref{lem:bottom-level-of-Kahler-differentials} we have 
    \[ 
        \Omega^{1,\Tamb}_{\ell/k}(G/e) \cong \Omega^1_{\ell(G/e)/k(G/e)} .
    \] 
    Since all restrictions in the $\ell$-module $\Omega^{1,\Tamb}_{\ell/k}$ are injective \cref{lem:sufficient-conditions-for-all-modules-to-be-FP}, $\Omega^{1,\Tamb}_{\ell/k} \cong 0$ if and only if $\Omega^{1,\Tamb}_{\ell/k}(G/e) \cong 0$.
\end{proof}

\begin{corollary}\label{cor:Galois-field-extension-gives-etale}
    Let $L$ be a $G$-Galois extension of a field $K$. Then 
    \[ 
        \underline{K} \rightarrow \FP(L) 
    \] 
    is formally \'{e}tale. If $\underline{K}$ satisfies the Hilbert basis theorem, then it is \'{e}tale.
\end{corollary}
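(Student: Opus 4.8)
The plan is to apply \cref{thm:detecting-etale-at-bottom-level} with $k = \underline{K}$ and $\ell = \FP(L)$, the map being $\FP$ of the $G$-equivariant inclusion $K \hookrightarrow L$ (where $G$ acts trivially on $K$). Recall that $\FP(L)(G/H) = L^H$, that the restriction $\Res_H^J$ of $\FP(L)$ is the field inclusion $L^J \hookrightarrow L^H$, and that the transfer is $\Tr_H^J(x) = \sum_{\bar g \in J/H} g x$. To invoke the theorem I must (a) show that $\FP(L)$ is flat over $\underline{K}$ and satisfies hypothesis (2), (b) compute the bottom level, and (c) for the \'{e}tale (rather than merely formally \'{e}tale) conclusion, verify finite presentability.

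For hypothesis (2) I would show that every transfer in $\FP(L)$ is surjective. By the fundamental theorem of Galois theory applied to the $G$-Galois extension $L/K$, the subgroup of $J$ fixing $L^H$ pointwise is exactly $H$, so the cosets $J/H$ biject with the $[L^H:L^J]$ distinct $L^J$-embeddings $L^H \hookrightarrow \overline{L}$ (all of which land in $L$ since $L/L^J$ is normal); hence $\Tr_H^J$ coincides with the field trace $\mathrm{Tr}_{L^H/L^J}$. Since $L/L^J$ is separable, so is the intermediate extension $L^H/L^J$, its trace form is nondegenerate, and $\mathrm{Tr}_{L^H/L^J}$ is therefore a nonzero $L^J$-linear map into a field, hence surjective. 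Note that we genuinely need this argument rather than appealing to the $\underline{K}$-algebra structure, since transfers in $\underline{K}$ need not be surjective in positive characteristic.

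The crux of the argument is flatness of $\FP(L)$ over $\underline{K}$, and here I would use the normal basis theorem: it provides an isomorphism $L \cong K[G]$ of $K[G]$-modules, and $K[G]$ is the self-dual permutation module on the $G$-set $G/e$, so $L \cong \mathrm{Map}(G/e, K)$ as $K[G]$-modules. Applying the fixed-point functor on $K[G]$-modules (which lands in $\underline{K}$-modules and preserves isomorphisms) and using the standard identification of $\FP$ of the coinduced $G$-ring $\mathrm{Map}(G/e,K)$ with $\Coind_e^G \Res_e^G \underline{K}$, we obtain an isomorphism $\FP(L) \cong \Coind_e^G \Res_e^G \underline{K}$ of $\underline{K}$-modules. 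This is flat by \cref{lem:free-k-modules-are-flat}. I expect this step to be the main obstacle: flatness over $\underline{K}$ is strictly stronger than the (automatic) levelwise flatness over the field $K$, and the normal basis theorem is exactly what bridges the gap.

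Finally I would assemble the pieces. Since $L/K$ is finite separable we have $\Omega^1_{L/K} = 0$ and $L$ is flat over $K$, so $L$ is formally \'{e}tale (indeed \'{e}tale) over $K$; thus \cref{thm:detecting-etale-at-bottom-level} yields that $\underline{K} \to \FP(L)$ is formally \'{e}tale, which is the first assertion. For the second, assume $\underline{K}$ satisfies the Hilbert basis theorem. The Tambara functor $\underline{K}$ is relatively finite dimensional since all its restrictions are identities, and $\FP(L)$ is levelwise finite over it because each $L^H$ is a subfield of the finite extension $L/K$; hence $\underline{K} \to \FP(L)$ is finite by \cite[Proposition 3.31]{CW25}, so finitely generated, so finitely presented by the Hilbert basis hypothesis. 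Then the finitely presented case of \cref{thm:detecting-etale-at-bottom-level} upgrades the conclusion to \'{e}taleness, since $L$ is \'{e}tale over $K$.
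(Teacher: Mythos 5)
Your proposal is correct and follows essentially the same route as the paper: flatness via the normal basis theorem identification $\FP(L) \cong \Coind_e^G K$, surjectivity of transfers to invoke \cref{thm:detecting-etale-at-bottom-level}, and the Hilbert basis hypothesis plus levelwise finiteness to upgrade to finite presentation. The only cosmetic difference is that you verify surjectivity of transfers directly via nondegeneracy of the field trace, whereas the paper transports it along the isomorphism with $\Coind_e^G K$; both are valid.
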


\begin{proof}
    First, we observe $\FP(L)$ is flat. Observe the $K[G]$-module isomorphic $L \cong K[G] \cong (\Coind_e^G K)(G/e)$. Since $L$ and $\Coind_e^G K$ are fixed-point $\underline{K}$-modules, we have a chain of $k$-module isomorphisms 
    \[ 
        \FP(L) \cong \FP((\Coind_e^G K)(G/e)) \cong \Coind_e^G K . 
    \] 
    Thus $\FP(L)$ is a free $\underline{K}$-module, so that it is flat by \cref{lem:free-k-modules-are-flat}. Additionally, since all transfers in $\Coind_e^G K$ are surjective, we deduce that all transfers in $\FP(L)$ are also surjective, so that we are in the situation of \cref{thm:detecting-etale-at-bottom-level}.
    
    If $\underline{K}$ satisfies the Hilbert basis theorem, then since $\FP(L)$ is levelwise finite over $\underline{K}$, it is finitely generated over $\underline{K}$, hence finitely presented. Since Galois extensions of fields are \'{e}tale, \cref{thm:detecting-etale-at-bottom-level} applies.
\end{proof}

If $k$ is cohomological and $|G|$ is invertible in $k(G/G)$, then every $k$-algebra is fixed point by \cref{lem:sufficient-conditions-for-all-modules-to-be-FP}. Since fixed-point Mackey functors are cohomological, we deduce that every $k$-algebra $R$ is cohomological and that $|G|$ is invertible in $R(G/G)$. It would be interesting to have an application of \cref{thm:detecting-etale-at-bottom-level} in the case that $k$ is not cohomological and/or $|G|$ is not invertible in $k(G/G)$.

\begin{corollary}\label{cor:Theorem-Q}
	Let $R$ be a ring such that $|G|$ is a unit in $R$. If $\underline{R}$ satisfies the Hilbert basis theorem, then 
    \[ 
        \ev_{G/e} : \fet_{\underline{R}} \rightarrow (\fet_R)^{BG} 
    \] 
    is an equivalence of categories with inverse induced by $\FP$.
\end{corollary}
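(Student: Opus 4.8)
The plan is to reduce the statement to \cref{thm:detecting-etale-at-bottom-level}, mirroring the proof of \cref{cor:finite-etale-group-schemes-modular-characteristic} but using \cref{thm:detecting-etale-at-bottom-level} in place of the structure theory of $\underline{\F}$-modules. First I would record that $\underline{R}$ is relatively finite dimensional (each restriction is the identity of $R$), cohomological (restrictions are identities, transfers are multiplication by indices), and has $|G|$ invertible in $\underline{R}(G/G) = R$. By the discussion following \cref{cor:Galois-field-extension-gives-etale}, every $\underline{R}$-module $M$ is then a fixed-point Mackey functor, so the section $\FP$ of $\ev_{G/e}$ (note $\ev_{G/e}\FP(A) = A^{\{e\}} = A$) furnishes a natural inverse to the unit $M \to \FP(M(G/e))$; hence $\ev_{G/e}\colon \underline{R}\text{-}\Mod \to (R\text{-}\Mod)^{BG}$ is an equivalence with inverse $\FP$. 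Since $\ev_{G/e}$ is strong symmetric monoidal with respect to $\boxtimes_{\underline{R}}$ and $\otimes_R$, this upgrades to an equivalence $\underline{R}\text{-}\Alg \simeq (R\text{-}\Alg)^{BG}$ identifying cogroup objects on the two sides, and $\FP$ inherits a strong symmetric monoidal structure and so carries cogroups to cogroups.

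Next I would show this equivalence restricts to one between $\fet_{\underline{R}}$ and $(\fet_R)^{BG}$. That $\ev_{G/e}$ carries $\fet_{\underline{R}}$ into $(\fet_R)^{BG}$ is \cref{thm:bottom-level-preserves-finite-affine-etale-group-schemes} together with \cref{lem:bottom-level-of-Kahler-differentials} (so $\Omega^{1,\Tamb}_{\ell/\underline{R}}(G/e) \cong \Omega^1_{\ell(G/e)/R}$, which therefore vanishes) and \cref{cor:ev-lands-in-etale-group-schemes-with-G-action} (the Weyl action); and $\ev_{G/e}\underline{R} = R$. For the converse, let $S \in (\fet_R)^{BG}$, i.e. a cogroup object of $R\text{-}\Alg$ with a compatible $G$-action and with $R \to S$ finite and étale; I must check $\underline{R} \to \FP(S)$ is finite and étale. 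The cogroup structure transports via the monoidal equivalence. As $S$ is étale over $R$ it is a flat $R$-module, and flatness over $R$ is the same as flatness in $(R\text{-}\Mod)^{BG}$ (detected on underlying modules), so $\FP(S)$ is flat over $\underline{R}$ because $\ev_{G/e}$ is a strong symmetric monoidal equivalence. Moreover $S$ is a finitely generated $R$-module; writing it as a quotient of a finite free $R[G]$-module and transporting along the equivalence (under which the rank-one free object $\ev_{G/e}(\Coind_e^G R)$ corresponds to $\Coind_e^G R$) exhibits $\FP(S)$ as a finitely generated $\underline{R}$-module, hence finitely presented since $\underline{R}$ satisfies the Hilbert basis theorem. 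Finally, $\FP(S)$ is cohomological (a fixed-point Mackey functor) and $|G|$ is invertible in $\FP(S)(G/G) = S^G \supseteq R$, so \cref{thm:detecting-etale-at-bottom-level} applies: $\FP(S)$ is formally étale over $\underline{R}$ because $\FP(S)(G/e) = S$ is formally étale over $\underline{R}(G/e) = R$. Combined with finite presentation this gives that $\FP(S)$ is étale and finite over $\underline{R}$, i.e. lies in $\fet_{\underline{R}}$. Hence $\ev_{G/e}$ and $\FP$ are mutually inverse equivalences between $\fet_{\underline{R}}$ and $(\fet_R)^{BG}$.

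I expect the main obstacle to be the reverse containment — verifying that $\FP(S)$ meets the hypotheses of \cref{thm:detecting-etale-at-bottom-level}, namely that it is flat and finitely presented over $\underline{R}$. Flatness is the delicate point: it relies on $\ev_{G/e}$ being a strong symmetric monoidal equivalence of abelian categories and on flatness in $(R\text{-}\Mod)^{BG}$ being detected on underlying $R$-modules, so that flatness of $S$ over $R$ forces flatness of $\FP(S)$ over $\underline{R}$. Everything else — the module and algebra equivalences, the transport of cogroup structures, and the bottom-level identifications of modules and of Kähler differentials — is either formal or already recorded in the excerpt.
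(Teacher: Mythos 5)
Your proposal is correct and follows essentially the same route as the paper: both use that invertibility of $|G|$ makes every $\underline{R}$-module fixed-point, so $\ev_{G/e}$ and $\FP$ are mutually inverse strong symmetric monoidal equivalences, then apply the Hilbert basis theorem to promote finite generation to finite presentation and invoke \cref{thm:detecting-etale-at-bottom-level} to detect \'{e}taleness at level $G/e$. You are in fact slightly more careful than the paper in explicitly verifying the flatness hypothesis of \cref{thm:detecting-etale-at-bottom-level} for $\FP(S)$, which the paper leaves implicit.
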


\begin{proof}
	Let $L$ be the representing ring of a finite affine \'{e}tale group scheme over $R$ with $G$-action. Then $L$ is a cogroup equipped with a $G$-action by cogroup automorphisms. Our assumption on $R$ implies that 
    \[ 
        \FP : R[G] \text{-} \Mod \rightarrow \underline{R} \text{-} \Mod 
    \] 
    is strong symmetric monoidal (it's the inverse equivalence to the strong symmetric monoidal functor $\ev_{G/e}$ by \cref{lem:sufficient-conditions-for-all-modules-to-be-FP}), so we deduce that $\FP(L)$ represents a finite affine group scheme over $\underline{R}$. By \cref{thm:detecting-etale-at-bottom-level}, $\FP(L)$ is \'{e}tale over $\underline{R}$. We have also used that each $L^H$ is a finite $R$-algebra, so that $\FP(L)$ is finitely generated over $\underline{R}$, hence finitely presented by the assumption that $\underline{R}$ satisfies the Hilbert basis theorem.
	
	It remains to check that $\ev_{G/e}$ and $\FP$ are mutually inverse, up to natural isomorphism. If $\ell$ represents an affine \'{e}tale group scheme over $k$, then the adjunction unit $\ell \rightarrow \FP(\ev_{G/e} \ell)$ is an isomorphism by \cref{lem:sufficient-conditions-for-all-modules-to-be-FP}. Conversely, if $L$ is the representing ring of an affine \'{e}tale group scheme over $k(G/e)$ with $G$-action, then $\ev_{G/e} \FP(L)$ clearly returns $L$.
\end{proof}

For example, \cref{cor:Theorem-Q} applies to the Tambara functor $\underline{R}$ whenever $\Q \subset R$, at least for the list of finite groups $G$ appearing in \cite[Theorem A]{Sun25}.

We now turn our attention to the case in which the characteristic possibly divides the order of $G$.

\begin{theorem}\label{thm:modular-characteristic-characterizing-finite-etale}
	Let $G$ be an arbitrary finite group and $\F$ any algebraically closed field. Then a finite $\underline{\F}$-algebra $\ell$ is \'{e}tale if and only if it is a finite product of $\underline{\F}$-algebras 
    \[ 
        \underline{\F} \rightarrow \Coind_H^G \underline{\F} . 
    \]
\end{theorem}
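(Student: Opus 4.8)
The plan is to prove both directions using the machinery developed in the previous sections, with the key input being the structure theory of coinduced Tambara functors from \cite{Wis25a} and the classification of finite étale extensions of the algebraically closed field $\F$.

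\textbf{The "if" direction.} This is the easy direction. Each $\underline{\F} \rightarrow \Coind_H^G \underline{\F} = \Coind_H^G \Res_H^G \underline{\F}$ is étale by \cref{thm:coind-are-etale} (using the identification $\Res_H^G \underline{\F} \cong \underline{\F}$). It is finite: $\Coind_H^G \underline{\F}$ is levelwise a finite product of copies of $\F$, hence levelwise finite over $\underline{\F}$, and $\underline{\F}$ is relatively finite dimensional, so \cite[Proposition 3.31]{CW25} gives finiteness as a module map. A finite product of finite étale $\underline{\F}$-algebras is finite étale by \cref{prop:prod-of-etale-is-etale} (and since finiteness as a module is clearly closed under finite products).

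\textbf{The "only if" direction.} Suppose $\ell$ is a finite étale $\underline{\F}$-algebra. First I would analyze the bottom level: $\ell(G/e)$ is a finite étale $\F$-algebra equipped with a $G$-action (via the Weyl group), and since $\F$ is algebraically closed, $\ell(G/e) \cong \prod_{i} \F$ is a finite product of copies of $\F$, i.e. a finite $G$-set worth of points; write $\ell(G/e) \cong \mathrm{Map}(T, \F)$ for a finite $G$-set $T$. Next, I would use flatness of $\ell$ over $\underline{\F}$ to control the other levels: by \cref{lem:flat-over-MRC-implies-MRC} all restrictions in $\ell$ are injective (since all restrictions in $\underline{\F}$ are the identity). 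Now decompose $T = \bigsqcup_j G/H_j$ into orbits. The idempotents of $\ell(G/e)$ cutting out the orbits are permuted by $G$; each orbit $G/H_j$ contributes an idempotent whose stabilizer is $H_j$, i.e. a "type $H_j$ idempotent" in the sense of \cite{Wis25a}. The plan is to use \cite[Proposition 3.6 and 3.8]{Wis25a} to lift this idempotent decomposition from the bottom level to a decomposition of $\ell$ itself as a Tambara functor: the type $H_j$ idempotent structure forces a corresponding splitting $\ell \cong \prod_j \ell_j$ with each $\ell_j$ coinduced from an $H_j$-Tambara functor, $\ell_j \cong \Coind_{H_j}^G \ell_j'$. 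By \cref{prop:coind-preserves-etale} (backwards direction) and \cref{prop:prod-of-etale-is-etale}, each $\ell_j'$ is a finite étale $\underline{\F}$-algebra over the base $H_j$-Tambara functor $\Res_{H_j}^G \underline{\F} \cong \underline{\F}$, but now with $\ell_j'(H_j/e) \cong \F$ a single copy of $\F$. Finally I would show by induction on $|G|$ (or directly) that a finite étale $\underline{\F}$-algebra $\ell'$ with $\ell'(G/e) \cong \F$ must be $\underline{\F}$ itself: the bottom level being a single point means (after the splitting above with trivial orbit structure) that $\ell'$ is "geometrically connected", and one checks using injectivity of restrictions and the étale/unramified condition — the vanishing of $\Omega^{1,\Tamb}$ plus flatness — that the unit map $\underline{\F} \rightarrow \ell'$ is an isomorphism. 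Unwinding, each $\ell_j' = \underline{\F}$, so $\ell_j = \Coind_{H_j}^G \underline{\F}$ and $\ell \cong \prod_j \Coind_{H_j}^G \underline{\F}$.

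\textbf{The main obstacle.} The hard part will be the last step: showing that a finite étale $\underline{\F}$-algebra whose bottom level is a single copy of $\F$ is forced to equal $\underline{\F}$. The bottom level condition pins down $\ell'(G/e) = \F$, and flatness via \cref{lem:flat-over-MRC-implies-MRC} makes the restrictions injective, so $\ell'(G/H)$ embeds in $\F$ for every $H$ — hence $\ell'(G/H)$ is either $0$ or $\F$. Ruling out $0$ (the unit $1 \in \ell'(G/H)$ is nonzero since $\ell'$ receives a map from $\underline{\F}$ and restricts compatibly) forces $\ell'(G/H) = \F$ for all $H$, with all restrictions the identity. One must then check the transfers and norms are the expected ones for $\underline{\F}$; this should follow from the étale hypothesis — most cleanly by observing that any such $\ell'$ with all restrictions isomorphisms is cohomological, and then deploying the genuine Kähler differential computation (or directly, the fact that $\underline{\F} \to \ell'$ is then levelwise an isomorphism of rings, hence an isomorphism of Tambara functors). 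The bookkeeping with types of idempotents and the precise invocation of \cite[Propositions 3.6 and 3.8]{Wis25a} to get the global product decomposition is the other place where care is needed, but it is the "geometrically connected implies trivial" step that is the genuine mathematical content.
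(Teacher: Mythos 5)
Your overall architecture matches the paper's: analyze $\ell(G/e)$ as a finite \'{e}tale $\F$-algebra with $G$-action, lift the resulting idempotent decomposition to a Tambara-functor splitting $\ell \cong \prod_j \Coind_{H_j}^G \ell_j'$ via the structure results of \cite{Wis25a}, and then show that a connected piece (bottom level a single copy of $\F$) must be $\underline{\F}$ itself. Your treatment of that last ``geometrically connected implies trivial'' step is essentially the paper's: injectivity of restrictions from \cref{lem:flat-over-MRC-implies-MRC} forces each $\ell'(G/H) \hookrightarrow \ell'(G/e) \cong \F$ to be an isomorphism onto $\F$ compatibly with the unit from $\underline{\F}$, and a levelwise isomorphism of Tambara functors is an isomorphism, so no separate check on transfers and norms is needed. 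Contrary to your framing, this step is not the hard part.

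The genuine gap is the descent step you dispatch with ``\cref{prop:coind-preserves-etale} (backwards direction).'' That proposition lets you pass from \'{e}taleness of $\Coind_{H_j}^G \underline{\F} \rightarrow \Coind_{H_j}^G \ell_j'$ to \'{e}taleness of $\Res_{H_j}^G\underline{\F} \rightarrow \ell_j'$; but what you actually have at that point is \'{e}taleness of the composite $\underline{\F} \rightarrow \Coind_{H_j}^G \ell_j'$ over the $G$-Tambara functor $\underline{\F}$, which is a different base. Passing from ``\'{e}tale over $\underline{\F}$'' to ``\'{e}tale over the intermediate algebra $\Coind_{H_j}^G \underline{\F}$'' is a cancellation property that is true classically but is nowhere established for Tambara functors, so you cannot simply invoke it. The paper fills exactly this hole: it base-changes $\underline{\F} \rightarrow \Coind_{H}^G \ell'$ along the flat map $\underline{\F} \rightarrow \Coind_H^G \underline{\F}$ (using \cref{lem:coind-adj-unit-is-base-changed} and \cref{prop:base-change-of-etale-is-etale}), identifies the result via the double coset formula as a product indexed by $H \backslash G / H$, and projects onto the identity double-coset factor using \cref{prop:prod-of-etale-is-etale} to conclude that $\Coind_H^G \underline{\F} \rightarrow \Coind_H^G \ell'$ is \'{e}tale; only then does \cref{prop:coind-preserves-etale} apply, and an induction on $|G|$ finishes the argument. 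Your proof needs this (or an independently proved cancellation lemma) inserted before the final connectedness step.
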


\begin{proof}
    By \cref{thm:bottom-level-preserves-finite-affine-etale-group-schemes} $\ell(G/e)$ is a finite \'{e}tale $\F$-algebra. Thus $\ell(G/e) \cong \prod_i \F$ for a finite indexing set; $G$ acts by permuting factors. Since $\ell(G/e)$ is Noetherian, \cite[Theorem D]{Wis25a} supplies a $G$-Tambara functor isomorphism 
    \[ 
		\ell \cong \prod_i \Coind_{H_i}^G \ell_i
	\] 
	where $\ell_i$ is an $H_i$-Tambara functor under $\underline{\F}$ such that $\ell_i(G/e) \cong \F$. By \cref{prop:prod-of-etale-is-etale} each $\ell_i$ is \'{e}tale, so without loss of generality we may take $\ell = \ell_i$.

    Now we induct on $|G|$. The base case is $G = \{ e \}$, wherein the result is classical. In the inductive step, we have two things to show: first, if $\ell \cong \Coind_H^G \ell'$, then $\ell'$ is an \'{e}tale $\Coind_H^G \underline{\F}$-algebra, so that by inductive hypothesis 
    \[ 
        \ell \cong \Coind_H^G \Coind_K^H \underline{\F} \cong \Coind_K^G \underline{\F} . 
    \] 
    Second, we must show that if $\ell(G/e) \cong \F$, then $\ell \cong \underline{\F}$

    For the first point, by \cref{lem:coind-adj-unit-is-base-changed} the base-change of $\underline{\F} \rightarrow \Coind_H^G \ell'$ along the flat map $\underline{\F} \rightarrow \Coind_H^G \underline{\F}$ is an \'{e}tale map 
    \[ 
        \Coind_H^G \underline{\F} \rightarrow \Coind_H^G \Res_H^G \Coind_H^G \ell' \cong \prod_{g \in H \backslash G / H} \Coind_{H \cap {}^g H}^G \Res_{H \cap {}^g H}^{{}^g H} {}^g \ell'
    \]
    By \cref{prop:prod-of-etale-is-etale} the projection onto the identity double coset factor describes an \'{e}tale map 
    \[ 
        \Coind_H^G \underline{\F} \rightarrow \Coind_H^G \ell' 
    \] 
    of $\underline{\F}$-algebras.
    
    For the second point, observe that all restrictions in $\ell$ are injective by \cref{lem:flat-over-MRC-implies-MRC}. For each $H \subset G$ we have a commutative diagram 
    \[ \begin{tikzcd} 
        \F \arrow[r, "="] & \underline{\F}(G/H) \arrow[r] \arrow[d, "\mathrm{Id}"] & \ell(G/H) \arrow[d, "\Res_e^H"] & \\
         & \underline{\F}(G/e) \arrow[r, "\cong"] & \ell(G/e) \arrow[r, "\cong"] & \F
    \end{tikzcd} \]
    from which it follows that $\underline{\F}(G/H) \rightarrow \ell(G/H)$ is an isomorphism. This establishes the claim.
\end{proof}

We are finally able to establish $G$-Galois descent for finite affine \'{e}tale group schemes along $\underline{\F} \rightarrow \Coind_e^G \F$ in the case of modular characteristic.

\begin{corollary}\label{cor:finite-etale-group-schemes-modular-characteristic}
    Let $\F$ be an algebraically closed field and $G$ an arbitrary finite group. Then
    \[ 
        \ev_{G/e} : \fet_{\underline{\F}} \rightarrow (\fet_\F)^{BG} 
    \] 
    is an equivalence of categories with inverse induced by $\FP$. The subcategory of commutative group schemes is equivalent to the category of finitely generated $\Z[G]$-modules.
\end{corollary}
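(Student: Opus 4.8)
The plan is to run the strategy of the proof of \cref{cor:Theorem-Q}, replacing the input "$\FP$ is symmetric monoidal because $|G|$ is invertible" (which fails when $p\mid|G|$) by \cref{thm:modular-characteristic-characterizing-finite-etale}, which cuts out the relevant subcategory of $\underline{\F}$-algebras precisely enough that the failure of monoidality of $\FP$ on all of $\underline{\F}\text{-}\Mod$ becomes irrelevant. By \cref{thm:bottom-level-preserves-finite-affine-etale-group-schemes} and \cref{cor:ev-lands-in-etale-group-schemes-with-G-action} the functor $\ev_{G/e}$ is already well-defined with target $(\fet_\F)^{BG}$, so the content is that it is an equivalence with inverse induced by $\FP$.

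First I would reduce to a monoidal statement about small subcategories. Let $\mathcal{E}\subseteq\underline{\F}\text{-}\Alg$ be the full subcategory of finite products of the algebras $\Coind_{H}^G\underline{\F}$, and let $\mathcal{E}'\subseteq\Ring^{BG}$ be the full subcategory of finite \'{e}tale $\F$-algebras equipped with a $G$-action, equivalently the $G$-rings of the form $\mathrm{Map}(S,\F)$ for $S$ a finite $G$-set. By \cref{thm:modular-characteristic-characterizing-finite-etale} the underlying $\underline{\F}$-algebra of any object of $\fet_{\underline{\F}}$ lies in $\mathcal{E}$; since $\mathcal{E}$ contains $\underline{\F}\cong\Coind_G^G\underline{\F}$ and is closed under $\boxtimes_{\underline{\F}}$ — by \cref{lem:coind-of-res-is-box-product} and the double coset formula one computes $\Coind_H^G\underline{\F}\boxtimes_{\underline{\F}}\Coind_K^G\underline{\F}\cong\prod_{g\in H\backslash G/K}\Coind_{H\cap{}^gK}^G\underline{\F}$ — the category $\fet_{\underline{\F}}$ is identified with the category of cogroup objects of $\mathcal{E}$. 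Because $\F$ is algebraically closed, every finite \'{e}tale $\F$-algebra is $\mathrm{Map}(S,\F)$ for a finite set $S$, so $(\fet_\F)^{BG}$ is likewise identified with the category of cogroup objects of $\mathcal{E}'$ (using $\mathrm{Map}(S,\F)\otimes_\F\mathrm{Map}(T,\F)\cong\mathrm{Map}(S\times T,\F)$, and that taking $(-)^{BG}$ commutes with forming cogroup objects). Thus it suffices to prove that $\ev_{G/e}$ restricts to an equivalence $\mathcal{E}\xrightarrow{\sim}\mathcal{E}'$ of symmetric monoidal categories with inverse $\FP$.

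For this I would use the adjunction $\ev_{G/e}\dashv\FP$ between $\underline{\F}$-algebras and $G$-rings (whose unit already figures in the proof of \cref{cor:Theorem-Q}). The counit $\ev_{G/e}\FP\to\mathrm{id}$ is always an isomorphism, since levelwise $\FP(A)(G/e)=A$. For the unit $\eta_\ell:\ell\to\FP(\ev_{G/e}\ell)$ it is enough, by compatibility of $\ev_{G/e}$ and $\FP$ with finite products, to treat $\ell=\Coind_H^G\underline{\F}$: this module is flat by \cref{lem:free-k-modules-are-flat}, so its restrictions are injective by \cref{lem:flat-over-MRC-implies-MRC}, giving an inclusion $\Coind_H^G\underline{\F}(G/K)\hookrightarrow\mathrm{Map}(G/H,\F)^K$ which is an isomorphism because the double coset formula identifies the source with $\mathrm{Map}(K\backslash G/H,\F)$ and the target is the same ring. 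Hence $\eta$ is an isomorphism on $\mathcal{E}$; equivalently, every object of $\mathcal{E}$ is a fixed-point Tambara functor. Essential surjectivity of $\ev_{G/e}:\mathcal{E}\to\mathcal{E}'$ is then immediate from $\mathrm{Map}(\bigsqcup_i G/H_i,\F)\cong\ev_{G/e}\big(\prod_i\Coind_{H_i}^G\underline{\F}\big)$, and full faithfulness follows from $\eta$ being an isomorphism, so $\ev_{G/e}:\mathcal{E}\to\mathcal{E}'$ is an equivalence with inverse $\FP$. Finally, $\ev_{G/e}$ is strong symmetric monoidal (from \cref{thm:bottom-level-preserves-finite-affine-etale-group-schemes}, the $G/e$-level of $\boxtimes_{\underline\F}$ being $\otimes_\F$ with the diagonal action), and we have matched the monoidal structures on $\mathcal{E}$ and $\mathcal{E}'$, so this equivalence is symmetric monoidal; a symmetric monoidal equivalence induces an equivalence on categories of cogroup objects, and under the identifications above this is exactly $\ev_{G/e}:\fet_{\underline{\F}}\xrightarrow{\sim}(\fet_\F)^{BG}$ with inverse induced by $\FP$.

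I expect the main obstacle to be the bookkeeping in the reduction step: checking that the cogroup structure maps of an object of $\fet_{\underline{\F}}$ (resp.\ $(\fet_\F)^{BG}$) genuinely stay inside $\mathcal{E}$ (resp.\ $\mathcal{E}'$), and making the passage "symmetric monoidal equivalence of $\mathcal{E}$ and $\mathcal{E}'$ $\Rightarrow$ equivalence on cogroup objects" fully rigorous, together with the small computations identifying $\Coind_H^G\underline{\F}$ with $\FP(\mathrm{Map}(G/H,\F))$ and matching box products with tensor products via the double coset formula. The conceptual point — and the reason the modular case does not follow from the argument of \cref{cor:Theorem-Q} verbatim — is that $\FP$ is not symmetric monoidal on all of $\underline{\F}\text{-}\Mod$ when $p\mid|G|$; it becomes monoidal only after restricting to the "permutation-type" subcategory $\mathcal{E}$, and it is \cref{thm:modular-characteristic-characterizing-finite-etale} that guarantees $\fet_{\underline{\F}}$ lies there.
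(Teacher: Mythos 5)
Your proposal is correct and follows essentially the same route as the paper's proof: the counit of the $\ev_{G/e}\dashv\FP$ adjunction is trivially an isomorphism, and \cref{thm:modular-characteristic-characterizing-finite-etale} forces the unit $\ell \rightarrow \FP(\ell(G/e))$ to be one as well, since finite products of the $\Coind_H^G\underline{\F}$ are fixed-point functors. Your write-up simply makes explicit the monoidal and cogroup bookkeeping (closure of the relevant subcategories under $\boxtimes$, essential surjectivity via permutation $G$-sets) that the paper's two-line argument leaves implicit.
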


\begin{proof}
    Clearly $\ev_{G/e} \circ \FP \cong \mathrm{Id}$. On the other hand, by \cref{thm:modular-characteristic-characterizing-finite-etale}, if $\ell$ represents a finite affine \'{e}tale group scheme, then the adjunction unit $\ell \rightarrow \FP(\ell(G/e))$ is an isomorphism. Thus $\FP \circ \ev_{G/e} \cong \mathrm{Id}$ as well.

    Note that commutative group schemes in $\fet_{\underline{\F}}$ correspond to functors $BG \rightarrow \fet_\F$ on the right which send the unique point of $BG$ to a commutative group scheme. The category of affine commutative \'{e}tale group schemes over $\F$ is equivalent to the category of finitely generated abelian groups, $\Z \text{-} \Mod^{\textrm{f.g.}}$ (this is essentially an immediate consequence of \cite[Proposition 2 on page A.V.30]{Bou90}). The final claim then follows from the equivalence 
    \[ 
        ( \Z \text{-} \Mod^{\textrm{f.g.}} )^{BG} \cong \Z[G] \text{-} \Mod^{f.g.} 
    \]
    of categories.
\end{proof}

\bibliographystyle{alpha}
\bibliography{ref}

\newcommand{\etalchar}[1]{$^{#1}$}
\begin{thebibliography}{CMQ{\etalchar{+}}24}

\bibitem[AS27a]{AS27a}
Emil Artin and Otto Schreier.
\newblock Algebraische {K}onstruktion reeller {K}\"orper.
\newblock {\em Abh. Math. Sem. Univ. Hamburg}, 5(1):85--99, 1927.

\bibitem[AS27b]{AS27b}
Emil Artin and Otto Schreier.
\newblock Eine {K}ennzeichnung der reell abgeschlossenen {K}\"orper.
\newblock {\em Abh. Math. Sem. Univ. Hamburg}, 5(1):225--231, 1927.

\bibitem[Bou90]{Bou90}
N.~Bourbaki.
\newblock {\em Algebra. {II}. {C}hapters 4--7}.
\newblock Elements of Mathematics (Berlin). Springer-Verlag, Berlin, 1990.
\newblock Translated from the French by P. M. Cohn and J. Howie.

\bibitem[Bru05]{Bru05}
Morten Brun.
\newblock Witt vectors and {T}ambara functors.
\newblock {\em Adv. Math.}, 193(2):233--256, 2005.

\bibitem[BW18]{BW18}
Mark Behrens and Dylan Wilson.
\newblock A {$C_2$}-equivariant analog of {M}ahowald's {T}hom spectrum theorem.
\newblock {\em Proc. Amer. Math. Soc.}, 146(11):5003--5012, 2018.

\bibitem[CGK25]{CGK25}
David Chan, Teena Gerhardt, and Inbar Klang.
\newblock Trace methods for equivariant algebraic k-theory, 2025.

\bibitem[CMQ{\etalchar{+}}24]{4DS}
David Chan, David Mehrle, J.~D. Quigley, Ben Spitz, and Danika~Van Niel.
\newblock On the {T}ambara affine line.
\newblock \url{https://arxiv.org/abs/2410.23052}, 2024.

\bibitem[CW25]{CW25}
David Chan and Noah Wisdom.
\newblock The algebraic $k$-theory of green functors, 2025.

\bibitem[DHM24]{DHM24}
Daniel Dugger, Christy Hazel, and Clover May.
\newblock Equivariant {$\underline{\mathbb{Z}/\ell}$}-modules for the cyclic group {$C_2$}.
\newblock {\em J. Pure Appl. Algebra}, 228(3):Paper No. 107473, 48, 2024.

\bibitem[Haz21]{Haz21}
Christy Hazel.
\newblock Equivariant fundamental classes in {${\rm RO}(C_2)$}-graded cohomology with {$\mathbb{Z}/2$}-coefficients.
\newblock {\em Algebr. Geom. Topol.}, 21(6):2799--2856, 2021.

\bibitem[Hil17]{Hil17}
Michael~A. Hill.
\newblock On the {A}ndr\'e-{Q}uillen homology of {T}ambara functors.
\newblock {\em J. Algebra}, 489:115--137, 2017.

\bibitem[HMQ23]{HMQ23}
Michael~A. Hill, David Mehrle, and James~D. Quigley.
\newblock Free incomplete {T}ambara functors are almost never flat.
\newblock {\em Int. Math. Res. Not. IMRN}, pages 4225--4291, 2023.

\bibitem[HW20]{HW20}
Jeremy Hahn and Dylan Wilson.
\newblock Eilenberg--{M}ac {L}ane spectra as equivariant {T}hom spectra.
\newblock {\em Geom. Topol.}, 24(6):2709--2748, 2020.

\bibitem[LRZ24]{LRZ24}
Ayelet Lindenstrauss, Birgit Richter, and Foling Zou.
\newblock Examples of \'etale extensions of {G}reen functors.
\newblock {\em Proc. Amer. Math. Soc. Ser. B}, 11:287--303, 2024.

\bibitem[Lur17]{Lur17}
Jacob Lurie.
\newblock Higher {A}lgebra.
\newblock \url{https://www.math.ias.edu/~lurie/papers/HA.pdf}, 2017.

\bibitem[Mat17]{Mat17}
Akhil Mathew.
\newblock T{HH} and base-change for {G}alois extensions of ring spectra.
\newblock {\em Algebr. Geom. Topol.}, 17(2):693--704, 2017.

\bibitem[May20]{May20}
Clover May.
\newblock A structure theorem for {$RO(C_2)$}-graded {B}redon cohomology.
\newblock {\em Algebr. Geom. Topol.}, 20(4):1691--1728, 2020.

\bibitem[Maz13]{Maz13}
Kristen~Luise Mazur.
\newblock {\em On the Structure of {M}ackey Functors and {T}ambara Functors}.
\newblock PhD thesis, University of Virginia, 2013.

\bibitem[MQS24]{MQS24}
David Mehrle, J.~D. Quigley, and Michael Stahlhauer.
\newblock Koszul resolutions over free incomplete tambara functors for cyclic $p$-groups, 2024.

\bibitem[Pet24]{Pet24}
Sarah Petersen.
\newblock The {$H \underline{\mathbb{F}}_2$}-homology of {$C_2$}-equivariant {E}ilenberg--{M}ac~{L}ane spaces.
\newblock {\em Algebr. Geom. Topol.}, 24(8):4487--4518, 2024.

\bibitem[SSW25]{SSW24}
Jason Schuchardt, Ben Spitz, and Noah Wisdom.
\newblock Algebraically {C}losed {F}ields in {E}quivariant {A}lgebra, 2025.

\bibitem[{Sta}25]{stacks-project}
The {Stacks project authors}.
\newblock The stacks project.
\newblock \url{https://stacks.math.columbia.edu}, 2025.

\bibitem[Sun25]{Sun25}
Emory Sun.
\newblock The levelwise finite generation of free {T}ambara functors.
\newblock \url{https://arxiv.org/abs/2508.08340}, 2025.

\bibitem[Wis25a]{Wis25a}
Noah Wisdom.
\newblock Clarification and coinduction of {T}ambara functors.
\newblock \url{https://arxiv.org/abs/2505.08066}, 2025.

\bibitem[Wis25b]{Wis25b}
Noah Wisdom.
\newblock The subgroup stratification of {N}akaoka spectra.
\newblock \url{https://arxiv.org/abs/2508.09360}, 2025.

\end{thebibliography}

\end{document}